\NeedsTeXFormat{LaTeX2e}
\hoffset=-0.95in
\voffset=-0.75in
\documentclass[12pt,a4paper]{amsart}

\setlength{\textheight}{9.8in}
\setlength{\textwidth}{7in}
\hbadness=10000
\vbadness=10000

\usepackage{amssymb,amscd,latexsym,amsxtra,dsfont,enumerate}

\usepackage[markup=underlined]{changes}
\definechangesauthor[color=red]{Curt}
\definechangesauthor[color=blue]{Emanuel}
\usepackage{amsfonts,bbold}
\usepackage[mathscr]{eucal}
\usepackage{calrsfs}
\usepackage{enumitem}
\usepackage{fge}
\usepackage{mathtools}
\usepackage{scalerel}
\usepackage[autosize]{dot2texi}

\definecolor{britishracinggreen}{rgb}{0.0, 0.26, 0.15}
\definecolor{amaranth}{rgb}{0.9, 0.17, 0.31}


\footskip=45mm

\newtheorem{theorem}{Theorem}
\newtheorem{corollary}[theorem]{Corollary}
\newtheorem{lemma}[theorem]{Lemma}
\newtheorem{proposition}[theorem]{Proposition}

\theoremstyle{definition}
\newtheorem{definition}[theorem]{Definition}

\newcommand{\mybh}{$\mathscr{B}(\mathscr{H})$}

\newcommand\restr[2]{{
		\left.\kern-\nulldelimiterspace 
		#1 
		\vphantom{\big|} 
		\right|_{#2} 
}}

\newcommand{\mynorm}[1]{ \left\| #1 \right\| }
\newcommand{\myinnerproduct}[2]{ \langle #1, #2 \rangle }
\newcommand{\myabs}[1]{ | #1 | }
\newcommand{\mybhplusplus}{$\mathscr{B}(\mathscr{H})^{++}$}

\begin{document}
	\title{On a comprehensive review of a proof of L\"owner's theorem}
	\author{Curt Healey}
	\address{Curt Healey\\
		Department of Mathematics\\
		Faculty of Science\\
		University of Malta\\
		Msida MSD 2080  Malta}
	\email{curt.c.healey.13@um.edu.mt}

	\date{\today}
	\begin{abstract}
		Recent studies in Kubo-Ando theory make frequent use of the relationship between Kubo-Ando connections and positive operator monotone functions. This relationship is deeply connected to L\"owner's theorem and our aim is to provide a comprehensive review of one of the proofs of L\"owner's theorem. Our motivation arises from the fact that the foundational components upon which the theorem rests are found within a variety of sources, rendering it difficult to obtain a complete understanding of the proof without engaging in substantial external consultation. By consolidating these elements into a single, continuous account, the proof becomes substantially more accessible and may be assimilated with greater clarity and efficiency.
	\end{abstract}
	\subjclass[2000]{Primary 47A64}
	\keywords{L\"owner's Theorem, Operator Monotone functions, Kubo-Ando connection, $C^{*}$-algebra, Positive definite cone}
	\maketitle
	
	\section{Motivation}
	Recently, various problems regarding Kubo-Ando connections have been solved which were stated in \cite[Page 22]{molnar-main}.  We recall that a binary operation $\sigma$ on the positive definite cone \mybhplusplus\,of the algebra \mybh\, of bounded operators on the Hilbert space $\mathscr{H}$,  is called a \emph{Kubo-Ando connection} if it satisfies the following properties:
	\begin{enumerate}[label=\textnormal{(\Roman*)}]
		\item If $A \leq C$ and $B \leq D$, then $ A \sigma B \leq C \sigma D $.
		\item $ C (A \sigma B) C \leq (CAC) \sigma (CBC) $.
		\item  If $ A_n  \downarrow A$ and $ B_n \downarrow B $, then $ A_n \sigma B_n \downarrow A \sigma B$ (We write $A_n\downarrow A$ when $(A_n)$ is monotonic decreasing and SOT-convergent to $A$).
	\end{enumerate}
	Furthermore, 	 In \cite[Theorem  3.2]{kubo-ando-main}, it is shown that there is an affine order isomorphism from the class of Kubo-Ando connections onto the class of operator monotone functions via the map $f(xI) = I \sigma (xI)$ for $x>0$. 	Since every finite Borel measure on $[0, \infty]$ is regular (i.e a Radon measure), we have that operator monotone functions correspond to positive finite Borel measures on $[0,\infty]$ by L\"owner's Theorem (see \cite{donoghue}): To every  operator monotone function $f$ corresponds a unique positive and finite Borel measure $m$ on $[0,\infty]$ such that
	\begin{equation}\label{e4}
		f(x)=\,\int_{[0,\infty]}\frac{x(1+t)}{x+t}\,{\rm d}m(t)\,=\,m(\{0\})\,+\,x\,m(\{\infty\})\,+\,\int_{(0,\infty)}\frac{1+t}{t}(t:x)\,{\rm d}m(t)\quad(x>0).
	\end{equation}
	It is easy to see that $f(0+)=m(\{0\})$, $f^\circ(0+)=m(\{\infty\})$.
	Finally, by \cite[Theorem 3.4]{kubo-ando-main}, there is an affine isomorphism from the class of positive finite Borel measures on $[0,\infty]$ onto the class of Kubo-Ando connections. This is given by the formula
	\begin{equation}\label{kubo-ando-integral-representation}
		A \sigma B = \alpha A +  \beta B + \int_{(0, \infty)} \frac{1+t}{t} (tA:B) \, {\rm d} m(t) \quad A, B \in  \mathscr B(\mathscr{H})^{+}
	\end{equation}
	where $\alpha = m(\lbrace0\rbrace)$ and $\beta =m(\lbrace \infty \rbrace)$.
	In the case of a symmetric Kubo-Ando connection, $ \alpha = \beta $. For further details on the provenance of the integral representation (\ref{kubo-ando-integral-representation}), the reader is referred to \cite[Theorem 3.2]{kubo-ando-main}. 
	\par 
	The connection is clear and we give here a comprehensive review of the proof for L\"owner's theorem which is found in \cite{hansen-fasttrack}. The original exposition assumes a working knowledge of both Choquet theory and matrix analysis, which may pose a challenge to readers unfamiliar with these areas. To improve accessibility and facilitate a deeper understanding of the proof, we have chosen to include not only the theorem itself but also the necessary foundational concepts. By consolidating the background material and the proof within a single, self-contained narrative, we aim to streamline the learning process and reduce the need for external references. This integrated approach allows the reader to follow the logical structure of the argument more easily, appreciate the interplay between different mathematical tools, and develop a more intuitive grasp of the theorem's significance and underlying principles.

	\section{Choquet Theory}
	\subsection{Introduction}
	In this section we develop Choquet theory for metrizable spaces. Let us first provide some motivation for the theory by stating some results regarding convex compact subsets in $\mathbb{R}^{n}$. We give here Carath\'eodory's and Minkowski's Theorems which can be found in \cite[Theorem 3.1.2]{niculescu} and \cite[Theorem 3.3.5]{niculescu} respectively. 
	\begin{theorem}[Carath\'eodory]
		Suppose that $E$ is a subset of a linear space $X$ and its convex hull $\textnormal{co}(E)$ has dimension $n$. Then each point $x \in \textnormal{co}(E)$ is the convex combination of at most $n+1$ points of $E$.
	\end{theorem}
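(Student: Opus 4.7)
The plan is to prove the result by a reduction argument: starting with any representation of $x$ as a convex combination of points of $E$, I will show that if more than $n+1$ points are used, one of them can be eliminated while retaining a representation as a convex combination, and iterating this argument terminates at $n+1$ points.

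First I would fix $x \in \textnormal{co}(E)$ and write $x = \sum_{i=1}^{k} \lambda_i e_i$ with $e_i \in E$, $\lambda_i > 0$ for each $i$, and $\sum_{i=1}^{k} \lambda_i = 1$, taking $k$ minimal. The goal is then to show $k \leq n+1$. Assume for contradiction that $k \geq n+2$. Since $\textnormal{co}(E)$ has dimension $n$, its affine hull is an $n$-dimensional affine subspace of $X$, so the $k-1 \geq n+1$ vectors $e_1 - e_k,\,\ldots,\,e_{k-1} - e_k$ lying in the underlying linear direction of that affine hull must be linearly dependent. Hence there exist scalars $\mu_1, \ldots, \mu_{k-1}$, not all zero, with $\sum_{i=1}^{k-1} \mu_i (e_i - e_k) = 0$. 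Setting $\mu_k := -\sum_{i=1}^{k-1} \mu_i$, I obtain $\sum_{i=1}^{k} \mu_i e_i = 0$ and $\sum_{i=1}^{k} \mu_i = 0$ with the $\mu_i$ not all zero.

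The key step is now to exploit this ``syzygy'' by tilting the coefficients: for every scalar $t$ the identity $x = \sum_{i=1}^{k} (\lambda_i - t\mu_i) e_i$ still holds and the modified coefficients still sum to $1$. Because the $\mu_i$ sum to zero without all being zero, at least one $\mu_i$ is strictly positive, so I would set
\[
t := \min \left\{ \frac{\lambda_i}{\mu_i} \,:\, \mu_i > 0 \right\}.
\]
This choice keeps every coefficient $\lambda_i - t\mu_i$ nonnegative while forcing at least one of them to vanish. Thus $x$ is a convex combination of strictly fewer than $k$ points of $E$, contradicting the minimality of $k$.

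The main obstacle I anticipate is only a bookkeeping one, namely to cleanly justify the passage from the hypothesis that $\textnormal{co}(E)$ has dimension $n$ to the statement that its affine span is $n$-dimensional, so that the $k-1$ affine differences $e_i - e_k$ are forced to be linearly dependent once $k-1 \geq n+1$. Everything else is routine linear algebra, and the reduction procedure clearly terminates after finitely many steps since the number of nonzero coefficients strictly decreases at each stage.
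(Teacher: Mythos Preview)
Your argument is the standard and correct proof of Carath\'eodory's theorem: the reduction via an affine dependence among the $e_i$ and the choice $t=\min\{\lambda_i/\mu_i:\mu_i>0\}$ is exactly what is needed, and your remark about the affine hull having dimension $n$ is the right way to justify the linear dependence of the $k-1\geq n+1$ vectors $e_i-e_k$. The paper itself does not give a proof of this theorem; it merely states it and refers the reader to \cite[Theorem 3.1.2]{niculescu}, so there is no in-paper argument to compare against.
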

	
	\begin{theorem}[Minkowski]
		Every nonempty convex and compact subset of $\mathbb{R}^{n}$ is the convex hull of its extreme points.
	\end{theorem}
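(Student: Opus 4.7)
The plan is to prove the statement by induction on the dimension $n$ of the ambient space (equivalently, on the dimension of the affine hull of the convex compact set $K$). The base case $n=0$ is immediate, since a singleton is its own extreme point. For the inductive step I would fix $x \in K$ and aim to write $x$ as a convex combination of extreme points of $K$ by pushing the problem down to a lower-dimensional face of $K$.

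If $x$ is already extreme there is nothing to prove, so assume otherwise. I would choose any line $\ell$ through $x$ and observe that, by convexity and compactness, $\ell \cap K = [y_1, y_2]$ is a closed segment containing $x$ in its relative interior, with $y_1, y_2$ on the relative boundary of $K$. At each $y_i$ I would invoke the supporting hyperplane theorem (a consequence of Hahn--Banach) to produce a closed half-space $H_i$ with $K \subseteq H_i$ and $y_i \in \partial H_i$. Then $F_i := K \cap \partial H_i$ is a nonempty compact convex set of strictly smaller affine dimension, so the inductive hypothesis exhibits each $y_i$ as a convex combination of extreme points of $F_i$.

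The key linking lemma, which I would record separately, is that every extreme point of $F_i$ is also an extreme point of $K$. Indeed, if $z \in F_i$ admitted a nontrivial decomposition $z = \lambda a + (1-\lambda) b$ with $a, b \in K$, then evaluating the linear functional $\phi$ defining $\partial H_i$ and using that $\phi$ attains its maximum on $K$ at $z$ forces $\phi(a) = \phi(b) = \phi(z)$, hence $a, b \in \partial H_i \cap K = F_i$, contradicting extremality in $F_i$. Combining this with the segment decomposition $x = \mu y_1 + (1-\mu) y_2$ then expresses $x$ as a convex combination of extreme points of $K$.

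I expect the main technical friction to lie in the supporting hyperplane step: one must work inside the affine hull of $K$ rather than in $\mathbb{R}^n$ itself so that the supporting hyperplane genuinely cuts down the affine dimension of the face, and one must justify that $y_1, y_2$ are true relative boundary points at which such a hyperplane exists. The remainder of the argument is a clean induction, with the extremality-of-faces lemma providing the small but essential bridge that closes it.
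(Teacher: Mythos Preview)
Your argument is the standard and correct proof of Minkowski's theorem by induction on the affine dimension, via supporting hyperplanes and the ``extreme points of a face are extreme in $K$'' lemma. One small point of phrasing: rather than ``choose any line $\ell$ through $x$'', you should use the assumption that $x$ is not extreme to pick a specific line, namely the one through points $a,b\in K$ with $x$ strictly between them; this guarantees that $x$ lies in the relative interior of $\ell\cap K$, which is what you need.

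As for comparison with the paper: there is nothing to compare. The paper does not prove Minkowski's theorem at all; it merely states it as motivational background and cites \cite[Theorem 3.3.5]{niculescu} for a proof. Your write-up therefore goes well beyond what the paper itself supplies.
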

	
	Therefore, by combining the above two theorems, we can arrive at the conclusion that every point of a convex compact subset  $K \subseteq \mathbb{R}^{n}$ can be written as a convex combination of its extreme points. Let $\textnormal{Ext}(K) = \lbrace e_i \rbrace_{i=1}^{n+1}$ denote the extreme points of $K$. If $x \in K $, then $ x = \sum_{i=1}^{n+1} e_i \lambda_i $ where $\sum_{i=1}^{n+1} \lambda_i = 1$ and $\lambda_{i} \geq 0$ for each $i$. Suppose  that to each extreme point $e_{i}$ we associate the nonnegative regular Borel measure $\mu_{e_i}$ defined by
	\[
	\mu_{e_i}(S) = 
	\begin{cases} 
		1 & \text{if } e_i \in S \\
		0 & \text{if } e_i \notin S
	\end{cases},
	\]
	where $S \subseteq K$ is a Borel set. Let $\mu = \sum_{i=1}^{n+1} \mu_{e_i} \lambda_{i} $ which is also a nonnegative regular Borel measure such that $\mu(K) = 1$. Since $\mu(\lbrace e_i \rbrace) = \lambda_i$ for any $1 \leq i \leq n+1$, we have, $ x= \sum_{i=1}^{n+1} \mu(\lbrace e_i \rbrace) e_i$, and if $\rho$ is a continuous linear functional on $K$, then $\rho(x) = \sum_{i=1}^{n+1} \mu(\lbrace e_i \rbrace) \rho(e_i) = \int_{Ext(K)}  \rho\,  \rm{d} \mu $. Let us now define what we mean when we say that $\mu$ \emph{represents} $x$ \cite[Pg 1 Definition]{phelps}.
	
	\begin{definition}
		Suppose that $X$ is a nonempty compact subset of a locally convex space $E$, and that $\mu$ is a probability measure on $X$. $\textnormal{(}$That is $\mu$ is a regular Borel measure on $X$ such that $\mu(X) = 1$$\textnormal{)}$. A point $x \in E$ is said to be represented by $\mu$ if $\rho(x) = \int_{X} \rho  \,  {\rm d} \mu$ for every continuous real-valued linear functional $\rho$ on $E$.
	\end{definition}
	
	Let us now define what we mean when we say that a nonnegative regular Borel measure $\mu$ is supported by a Borel set $S$.
	
	\begin{definition}
		If $\mu$ is a nonnegative regular Borel measure on the compact Hausdorff sapce $X$ and $S$ is a Borel subset of $X$, then we say that $\mu$ is supported by $S$ if $\mu(X \setminus S) = 0$.
	\end{definition}
	
	This leads us to the theorem of interest \cite[Page 14, Theorem (Choquet)]{phelps}.
	
	\begin{theorem}[Choquet]
		Suppose that $X$ is a metrizable compact convex subset of a locally convex space E, and that $x_0$ is an element of $X$. Then there is a probability measure $\mu$ on $X$ which represents $x_0$ and is supported by the extreme points of $X$.
	\end{theorem}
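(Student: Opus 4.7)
The plan is to follow the classical Hahn--Banach approach to Choquet's theorem, which rests on three pillars: the existence of a strictly convex continuous function $f$ on $X$, the analysis of its concave upper envelope, and a careful extension of a linear functional on $A(X)$ (the space of continuous affine functions on $X$) to all of $C(X)$.

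First, since $X$ is compact and metrizable, $C(X)$ is separable and, by Hahn--Banach, the continuous linear functionals on $E$ separate points of $X$. From these I would extract a countable family $\{h_n\} \subseteq A(X)$ separating points of $X$, and set
\[
f := \sum_{n=1}^{\infty} \frac{h_n^2}{2^n(1 + \|h_n\|_\infty^2)}.
\]
Each summand is continuous and convex, and whenever $y \neq z$ some $h_n$ satisfies $h_n(y) \neq h_n(z)$, which contributes a strict inequality $h_n^2\bigl((y+z)/2\bigr) < \bigl(h_n(y)^2+h_n(z)^2\bigr)/2$; hence $f$ is continuous and strictly convex on $X$. Next, for $g \in C(X)$ define the upper envelope $\hat{g}(x) := \inf\{h(x) : h \in A(X),\ h \geq g\}$, which is concave, upper semicontinuous, bounded, and majorizes $g$, while $q(g) := \hat{g}(x_0)$ is sublinear on $C(X)$. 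The key geometric observation is that $\{\hat{f} = f\} \subseteq \textnormal{Ext}(X)$: if $x = (y+z)/2$ with $y \neq z$, strict convexity of $f$ and concavity of $\hat{f}$ give
\[
f(x) < \tfrac{1}{2}\bigl(f(y) + f(z)\bigr) \leq \tfrac{1}{2}\bigl(\hat{f}(y) + \hat{f}(z)\bigr) \leq \hat{f}(x),
\]
so $\hat{f}(x) > f(x)$ whenever $x$ is not extreme.

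The heart of the argument is a Hahn--Banach extension. Since $f$ is not affine, $A(X) \oplus \R f$ is a proper subspace of $C(X)$; on it define $L(h + \alpha f) := h(x_0) + \alpha \hat{f}(x_0)$. I would verify $L \leq q$ by splitting into cases: for $\alpha \geq 0$ equality holds since $\widehat{h+\alpha f} = h + \alpha \hat{f}$, while for $\alpha < 0$ the inequality reduces to $\hat{f}(x_0) \geq f(x_0)$, using that the lower envelope of the continuous convex function $f$ equals $f$ itself (Hahn--Banach separation of its epigraph). Extending $L$ to a linear $\tilde{L}\colon C(X) \to \R$ dominated by $q$ gives a positive (since $q(g) \leq 0$ when $g \leq 0$) and normalized ($\tilde{L}(1) = 1$) functional, which by the Riesz representation theorem corresponds to a probability Borel measure $\mu$ satisfying $\int h\, {\rm d}\mu = h(x_0)$ for every $h \in A(X)$---that is, $\mu$ represents $x_0$.

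Finally, for any affine $h \geq f$ we have $\int \hat{f}\, {\rm d}\mu \leq \int h\, {\rm d}\mu = h(x_0)$, and taking the infimum over such $h$ yields $\int \hat{f}\, {\rm d}\mu \leq \hat{f}(x_0) = \tilde{L}(f) = \int f\, {\rm d}\mu$. Combined with $\hat{f} \geq f$, this forces $\hat{f} = f$ $\mu$-almost everywhere, and therefore $\mu(X \setminus \textnormal{Ext}(X)) \leq \mu(\{\hat{f} > f\}) = 0$, so $\mu$ is supported by the extreme points. The main obstacle I anticipate is the verification of the domination $L \leq q$ on $A(X) \oplus \R f$, where the prescribed extension value at $f$ must be carefully balanced against the sublinear upper bound; a subtler technical matter is the measurability of $\textnormal{Ext}(X)$, which in the metrizable setting is a $G_\delta$ Borel set, ensuring that the statement ``$\mu$ is supported by $\textnormal{Ext}(X)$'' makes sense as a statement about Borel measures.
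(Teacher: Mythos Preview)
Your proposal is correct and follows essentially the same route as the paper: construct a strictly convex continuous function as a series of squares of separating affine functions, define the sublinear functional $q(g)=\hat g(x_0)$ via the upper envelope, extend the linear functional $L(h+\alpha f)=h(x_0)+\alpha\hat f(x_0)$ from $A(X)\oplus\mathbb{R}f$ to $C(X)$ by Hahn--Banach, obtain $\mu$ from Riesz, and conclude support on $\{\hat f=f\}\subseteq\textnormal{Ext}(X)$ by showing $\int\hat f\,{\rm d}\mu=\int f\,{\rm d}\mu$. The only cosmetic differences are your normalization of the series defining $f$ and your phrasing of the $\alpha<0$ case in terms of the lower envelope of $f$ rather than the upper envelope of the concave function $\alpha f$; both amount to the same separation argument the paper records as property (IV) of the upper envelope.
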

	
	The proof of the theorem requires certain preliminary results on the space $C(X)$ of complex valued functions on some arbitrary topological space $X$.
	
	\subsection{Preliminary results}
	Let us first recall a sufficient condition for a continuous function to become uniformly continuous between metric spaces.
	\begin{proposition}\label{uniform-continuity-of-f}
		Let $X, Y$ be metric spaces such that $X$ is compact. Suppose $f: X \rightarrow Y$ is a continuous function, then $f$ is uniformly continuous. 
	\end{proposition}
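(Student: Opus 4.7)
The plan is to give a standard argument by contradiction using sequential compactness, since this is the cleanest route in the metric setting. The contrapositive of uniform continuity says that if $f$ is not uniformly continuous, then there exists some $\varepsilon>0$ such that for every $n\in\mathds N$ one can find points $x_n,y_n\in X$ with $d_X(x_n,y_n)<1/n$ yet $d_Y(f(x_n),f(y_n))\geq\varepsilon$. I would begin the proof by fixing such sequences and extracting convergent subsequences using compactness.

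Next I would invoke the fact that a compact metric space is sequentially compact, so there is a subsequence $(x_{n_k})$ converging to some $x\in X$. From $d_X(x_{n_k},y_{n_k})<1/n_k\to 0$ it follows by the triangle inequality that $(y_{n_k})$ converges to the same limit $x$. Continuity of $f$ at $x$ then yields $f(x_{n_k})\to f(x)$ and $f(y_{n_k})\to f(x)$, so $d_Y(f(x_{n_k}),f(y_{n_k}))\to 0$. This contradicts $d_Y(f(x_{n_k}),f(y_{n_k}))\geq\varepsilon$ for all $k$, completing the argument.

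An alternative plan, which I would mention only if a more constructive proof is desired, is to use an open cover / Lebesgue-number style argument: for each $x\in X$ pick $\delta_x>0$ with $f(B(x,\delta_x))\subseteq B(f(x),\varepsilon/2)$ by pointwise continuity, cover $X$ by the shrunken balls $B(x,\delta_x/2)$, pass to a finite subcover via compactness, and set $\delta$ to be the minimum of the corresponding $\delta_x/2$; the triangle inequality then gives uniform continuity with this $\delta$.

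There is no real obstacle here — the only subtle point is making sure the compactness of $X$ is used in exactly the right place (either to extract the convergent subsequence, or to reduce the cover to a finite one). The result follows routinely from the definitions once that step is executed, and no further hypotheses on $Y$ beyond being a metric space are needed.
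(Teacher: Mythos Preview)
Your proof is correct and entirely standard; both the sequential-compactness contradiction argument and the Lebesgue-number alternative you sketch are valid. Note, however, that the paper does not actually prove this proposition at all --- it is merely recalled as a well-known preliminary fact and stated without proof, so there is nothing to compare your argument against.
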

	
	Let us also recall that if $X$ is a metrizable compact space, then $C(X)$ is separable  \cite[Page 140 Theorem 6.6]{conway}.
	
	\begin{theorem}\label{compact-metric-space-implies-c(x)-separable}
		If $X$ is a compact metric space, then $C(X)$ is separable. 
	\end{theorem}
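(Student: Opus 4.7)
The plan is to apply the Stone--Weierstrass theorem to a carefully chosen countable $(\mathds Q + i\mathds Q)$-subalgebra of $C(X)$. First I would observe that the compact metric space $X$ is itself separable: for each $n \in \mathds N$, compactness supplies a finite cover of $X$ by open balls of radius $1/n$, and taking the union of the centres over all $n$ yields a countable dense set $D = \{x_n\}_{n \in \mathds N} \subseteq X$.

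Using this dense set, define real-valued continuous functions $f_n(x) := d(x, x_n)$; each is $1$-Lipschitz. The family $\{f_n\}_{n \in \mathds N}$ separates points of $X$: for $x \neq y$ one may choose $x_n \in D$ with $d(x, x_n) < d(x,y)/3$, whence $f_n(x) < d(x,y)/3$, while the reverse triangle inequality gives $f_n(y) \geq d(x,y) - d(x, x_n) > 2 d(x,y)/3$. Let $\mathcal A$ denote the set of all polynomials in finitely many of the $f_n$ with coefficients in $\mathds Q + i\mathds Q$, together with the constants. Then $\mathcal A$ is countable (a countable union of countable sets), it is a subalgebra of $C(X)$, it is unital, it separates points by the above, and it is self-adjoint since the $f_n$ are real-valued.

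By the complex Stone--Weierstrass theorem applied to the $\mathds C$-linear span of $\mathcal A$, this span has uniform closure equal to $C(X)$; since any complex coefficient can be approximated arbitrarily well by elements of $\mathds Q + i\mathds Q$, the countable set $\mathcal A$ itself is uniformly dense in $C(X)$, and separability follows. The principal step is the invocation of Stone--Weierstrass, which rests on verifying unitality, self-adjointness, and point separation of $\mathcal A$; none of these verifications is delicate once the $f_n$ are in hand. An alternative that bypasses Stone--Weierstrass would use Proposition \ref{uniform-continuity-of-f} directly: for each $n$, take a finite $1/n$-cover of $X$, build a continuous partition of unity $\{\phi_i^{(n)}\}$ subordinate to it, and approximate $f \in C(X)$ uniformly by sums $\sum_i c_i^{(n)} \phi_i^{(n)}$ with coefficients drawn from a countable dense subset of $\mathds C$, where uniform continuity of $f$ controls the error uniformly.
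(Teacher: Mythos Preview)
Your argument is correct. The paper does not actually supply a proof of this theorem; it merely records the statement and cites Conway's textbook \cite[Page 140, Theorem 6.6]{conway}, so there is no in-paper proof to compare against. Your Stone--Weierstrass route via the distance functions $f_n(x)=d(x,x_n)$ is the standard proof and all verifications (countability of the $(\Q+i\Q)$-algebra, point separation, self-adjointness, passage from the $\C$-span back to rational coefficients) are handled cleanly. The alternative you sketch using Proposition~\ref{uniform-continuity-of-f} and partitions of unity is also valid and is in fact closer to how some functional-analysis texts present it.
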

	
	Let us next recall the definitions for affine, convex, and concave functions.
	\begin{definition}
		\index{Function!affine} \index{Function!convex} \index{Function!concave} Let $V$ be a vector space and $C \subseteq V$ a convex set such that $\lambda \in (0, 1)$ and $x, y \in C$. Then $f: C \rightarrow \mathbb{R}$ is said to be affine if $f(\lambda x + (1-\lambda)y) = \lambda f(x) + (1- \lambda) f(y)$. Furthermore, $f$ is said to be convex if
		$f(\lambda x + (1-\lambda)y) \leq \lambda f(x) + (1- \lambda) f(y)$, and $f$ is said to concave if $-f$ is convex.
	\end{definition}
	
	Consider the set of affine functions $\mathcal{A} \subseteq C(X)$ where $X$ is a convex subset of a locally convex space $E$. Naturally, $\mathcal{A}$ is a subspace of the Banach space $C(X)$. Furthermore, since $\mathcal{A}$ contains all functions of the form $x \rightarrow \rho(x)+ r$ on $X$ where $\rho$ is a continuous real-valued linear functional on $X$ and $r \in \mathbb{R}$, then $\mathcal{A}$ separates $X$ \cite[Corollary 1.2.12]{kadison-vol-1}. Let us first define what we mean by the \emph{upper envelope} of $f$.
	\begin{definition}
		\index{Upper envelope} Let $X$ be a convex subset of a locally convex space $E$ and $f: X \rightarrow \mathbb{R}$ be a bounded function. Let $\mathcal{A}$ denote the set of continuous affine functions on $X$. The upper envelope of $f$ is $\bar{f}(x) = \textnormal{inf} \lbrace h(x) : h \in \mathcal{A} \; \textnormal{and} \; h \geq f \rbrace$ where $x \in X$.
	\end{definition}
	
	Since constant functions are elements of $\mathcal{A}$ and the upper envelope is defined on bounded functions, then the upper envelope is well-defined. Let us go through some properties of the upper envelope \cite[Page 13]{phelps}.
	\begin{proposition}\label{upper-envelope-properties}
		Let $X$ be a subset of a locally convex space $E$ and $f, g : X \rightarrow \mathbb{R}$ be bounded functions.  Let $\mathcal{A}$ denote the set of continuous affine functions on $X$. Then, the upper envelope satisfies the following properties.
		\begin{enumerate}[label=\textnormal{(\Roman*)}]
			\item $\overline{f}$ is bounded, concave, upper semicontinuous \textnormal{(}hence Borel measurable\textnormal{)}, and $f \leq \overline{f}$.
			\item $\overline{f + g} \leq \overline{f} + \overline{g}$, and $\alpha \geq 0$ implies $\overline{\alpha f} = \alpha \overline{f}$.
			\item If $g \in \mathcal{A}$, then $\overline{f + g} = \overline{f} + g$.
			\item If $f$ is concave and upper semicontinuous, then $f=\overline{f}.$
		\end{enumerate}
	\end{proposition}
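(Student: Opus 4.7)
The plan is to treat (I)--(III) as essentially direct consequences of the definition of $\overline f$ as a pointwise infimum over $\mathcal A$, and to reserve the substantive work for (IV), which requires a geometric Hahn--Banach separation.

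For (I), since $f$ is bounded some constant function lies in $\mathcal A$ and dominates $f$, so $\overline f$ is bounded above; the lower bound comes from $f\leq\overline f$, which is immediate from the definition. Concavity is inherited because a pointwise infimum of affine (hence concave) functions is concave. Upper semicontinuity is equally automatic, since each level set $\{x : \overline f(x) < \alpha\}$ is the union over admissible $h \in \mathcal A$ of the open sets $\{x : h(x) < \alpha\}$.

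For (II), given $h_1\geq f$ and $h_2\geq g$ in $\mathcal A$, the sum $h_1+h_2$ lies in $\mathcal A$ and dominates $f+g$, so $\overline{f+g}\leq h_1+h_2$; taking infima over each summand separately yields subadditivity. Positive homogeneity follows from the bijection $h\mapsto\alpha h$ on $\mathcal A$ for $\alpha>0$, the case $\alpha=0$ being trivial. For (III), since $g\in\mathcal A$ one has $\overline g=g$ (both inequalities are immediate), so (II) gives $\overline{f+g}\leq\overline f+g$. The reverse inequality is obtained by applying (II) to the decomposition $f=(f+g)+(-g)$, yielding $\overline f\leq\overline{f+g}+(-g)$, i.e.\ $\overline f+g\leq\overline{f+g}$.

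The main obstacle is (IV). Given $x_0\in X$ and $\varepsilon>0$, it suffices to exhibit $h\in\mathcal A$ with $h\geq f$ and $h(x_0)<f(x_0)+\varepsilon$, for then $\overline f(x_0)\leq h(x_0)<f(x_0)+\varepsilon$, and combined with (I) and the arbitrariness of $\varepsilon$ one concludes $f=\overline f$. Concavity makes the hypograph $H=\{(x,t)\in X\times\mathbb R : t\leq f(x)\}$ convex, and upper semicontinuity makes it closed in $E\times\mathbb R$. Since $(x_0,f(x_0)+\varepsilon)\notin H$, I would invoke the geometric Hahn--Banach theorem in $E\times\mathbb R$ to obtain a continuous linear functional $(x,t)\mapsto\rho(x)+\beta t$ and a constant $c$ strictly separating this point from $H$. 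A brief case analysis rules out $\beta\leq 0$: if $\beta<0$, letting $t\to-\infty$ in $H$ contradicts the upper bound $c$; if $\beta=0$, the point $(x_0,f(x_0))\in H$ already violates the strict separating inequality. With $\beta>0$, the function $h(x):=(c-\rho(x))/\beta$ is continuous and affine on $X$, dominates $f$ (because $\beta f(x)+\rho(x)\leq c$ upon taking $t=f(x)$), and satisfies $h(x_0)<f(x_0)+\varepsilon$ upon dividing the strict separating inequality by $\beta$, which is exactly what is required.
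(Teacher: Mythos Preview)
Your proof is correct and follows essentially the same approach as the paper's: parts (I)--(III) are handled by direct manipulation of the defining infimum, and (IV) is proved via Hahn--Banach separation of a point from the closed convex hypograph of $f$ in $E\times\mathbb R$, extracting a dominating affine function from the separating hyperplane. The only cosmetic differences are that the paper argues (IV) by contradiction (separating $(x_0,\overline f(x_0))$ rather than $(x_0,f(x_0)+\varepsilon)$) and obtains the sign of the vertical component of the functional from the gap $f(x_0)<\overline f(x_0)$ instead of from the unboundedness-below of the hypograph; your direct argument and your case analysis for $\beta\leq 0$ are equally valid and arguably cleaner.
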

	
	\begin{proof}
		\emph{\textnormal{(I)}}. We shall only prove that $\overline{f}$ is upper semicontinuous. Consider $x \in \overline{f}^{-1}(-\infty, r)$ where $r \in \mathbb{R}$. There exists $g \in \mathcal{A}$ such that $g \geq f$ and $\overline{f}(x)  < g(x) < r$. Therefore, $x \in g^{-1}(-\infty, r)$ and $ g^{-1}(-\infty, r) \subseteq \overline{f}^{-1}(-\infty, r)$. By continuity of $g$, there is an open set $O \subseteq \mathbb{R}$ such that $x \in O \subseteq \overline{f}^{-1}(-\infty, r)$. Therefore, $\overline{f}^{-1}(-\infty, r)$ is open for any $r \in \mathbb{R}$ which implies that $\overline{f}$ is upper semicontinuous.
		\par
		\emph{\textnormal{(II)}}. Let $x \in X$. If $f(x) \leq h_1(x)$ and $g(x) \leq h_2(x)$, then $ f(x) + g(x) \leq h_1(x) + h_2(x)$. Therefore, $\lbrace h(x): h \geq f, h \in \mathcal{A} \rbrace + \lbrace h(x): h \geq g, h \in \mathcal{A} \rbrace$ $\subseteq \lbrace h(x) : h \geq f + g, \, h \in \mathcal{A} \rbrace$. By taking the infimum, we can conclude that $\overline{f+g}(x) \leq \overline{f}(x) + \overline{g}(x)$ for all $x \in X$. This proves the first part of the result. 
		\par 
		Let $\alpha \geq 0$. Then $\lbrace h(x) : h \geq \alpha f, \, h \in \mathcal{A} \rbrace = \lbrace \alpha h(x): h \geq f, \, h \in \mathcal{A}  \rbrace$. By taking the infimum, we can conclude that $ \overline{\alpha f}(x) = \alpha \overline{f}(x)$ for all $x \in X$.  
		\par 
		\emph{\textnormal{(III)}}. Since $g \in \mathcal{A}$, we have $\overline{g} = g$. By (II), $\overline{f + g} \leq \overline{f} + g$. Moreover, $f + g \leq \overline{f+g}$ implies that $\overline{f} \leq \overline{f + g} - \overline{g}$. Therefore, $\overline{f} + g\leq \overline{f + g}$ which allows us to conclude that $\overline{f + g} = \overline{f} + g$.
		\par 
		\emph{\textnormal{(IV)}} Consider $K \subseteq X \times \mathbb{R}$ such that $K = \lbrace \lbrace x, r \rbrace : f(x) \geq r \rbrace$. Let $x_1, x_2 \in X$, $\lambda \in (0,1)$, and $r_1, r_2 \in \mathbb{R}$. Since $f( \lambda x_1 + (1- \lambda) x_2) \geq \lambda f(x_1) + (1-\lambda) f(x_2) \geq \lambda r_1 + (1-\lambda) r_2$, we have that $K$ is convex. Since the hypograph of an upper semicontinuous function is closed, it follows that $K$ is closed. Suppose that $f(x_0) < \overline{f}(x_0)$ for some $x_0 \in X$. By \cite[Corollary 1.2.12]{kadison-vol-1}, we can find a continuous linear functional $\rho$ such that $\text{sup} \, \rho(K) < \lambda < \rho (\lbrace x_0, \overline{f}(x_0) \rbrace)$ for some $\lambda \in \mathbb{R}$. Since $ \rho( \lbrace x_0, f(x_0) \rbrace) < \rho( \lbrace x_0, \overline{f}(x_0) \rbrace)$ implies that $(\overline{f}(x_0) - f(x_0)) \rho (\lbrace 0,1 \rbrace) > 0$, it follows that $\rho (\lbrace 0,1 \rbrace) > 0$. Let us define a continuous affine function $h : X \rightarrow \mathbb{R}$ such that 
		\[ h(x) = \frac{\lambda - \rho( \lbrace x, 0 \rbrace)}{\rho (\lbrace0,1\rbrace)}, \quad \forall \, x \in X. \]
		Rearranging the terms allows us to conclude that $\rho (\lbrace x, h(x) \rbrace) = \lambda$. Furthermore, since $\rho (\lbrace 0, 1 \rbrace) > 0$, we have
		\begin{equation}
			\begin{split}
				\label{upper-envelope-properties-eq1}
				\rho( \lbrace x, f(x) \rbrace) < \lambda = \rho(\lbrace x, h(x) \rbrace) &\implies 0 < (h(x)-f(x))\rho(\lbrace 0, 1 \rbrace), \quad \forall \, x \in X \\
				&\implies 0 < h(x)-f(x), \quad \forall \, x \in X   \\
				&\implies \overline{f}(x) \leq h(x), \quad \forall \, x \in X.
			\end{split}
		\end{equation}
		Similarly, 
		\[   \rho(\lbrace x_0, h(x_0) \rbrace) = \lambda <  \rho(\lbrace x_0, \overline{f}(x_0) \rbrace) \implies  h(x_0) < \overline{f}(x_0) . \]
		However, this contradicts  (\ref{upper-envelope-properties-eq1}), so $\overline{f} = f$. 
	\end{proof}
	
	\subsection{Proof}
	We follow the proof found in \cite[Page 14, Theorem (Choquet)]{phelps}.
	\begin{lemma}\label{existence-strictly-convex-function}
		Suppose that $X$ is a metrizable compact convex subset of a locally convex space E. Then there exists a strictly continuous convex function $f: X \rightarrow \mathbb{R}$.
	\end{lemma}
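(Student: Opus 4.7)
The statement presumably means "strictly convex continuous function" (the phrase "strictly continuous convex" reads as a typo). My plan is to construct such a function as a uniformly convergent series of squares of continuous affine functions.

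First I would collect the two ingredients already available in the excerpt. Since $X$ is compact metrizable, Theorem \ref{compact-metric-space-implies-c(x)-separable} gives that $C(X)$ is separable. The subspace $\mathcal{A} \subseteq C(X)$ of continuous affine functions on $X$ is therefore separable, so we may fix a countable family $(h_n)_{n\in\N}\subseteq \mathcal{A}$ with $\|h_n\|_\infty\leq 1$ that is dense in the closed unit ball of $\mathcal{A}$. Secondly, $\mathcal{A}$ separates the points of $X$: by the Hahn--Banach theorem, the continuous linear functionals on $E$ separate points, and their restrictions to $X$ (possibly shifted by a constant) lie in $\mathcal{A}$. Hence for any $x\neq y$ in $X$ there is some $h\in\mathcal{A}$ with $h(x)\neq h(y)$, and by density we can even choose such an $h$ from the sequence $(h_n)$.

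Next I would define
\[
f(x) \;=\; \sum_{n=1}^{\infty} 2^{-n}\, h_n(x)^2, \qquad x\in X.
\]
The Weierstrass $M$-test (with $M_n = 2^{-n}$) shows that this series converges uniformly on $X$, so $f$ is continuous. Each summand $h_n^2$ is convex on $X$: if $h_n$ is affine then for $\lambda\in(0,1)$ and $x,y\in X$,
\[
h_n(\lambda x + (1-\lambda)y)^2 \;=\; \bigl(\lambda h_n(x) + (1-\lambda) h_n(y)\bigr)^2 \;\leq\; \lambda h_n(x)^2 + (1-\lambda) h_n(y)^2
\]
by the convexity of $t\mapsto t^2$ on $\R$, with equality if and only if $h_n(x)=h_n(y)$. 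A nonnegative combination of convex functions is convex, so $f$ is convex.

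Finally I would argue strict convexity. Fix $x\neq y$ in $X$ and $\lambda\in(0,1)$. By the separation remark above, there exists an index $n_0$ with $h_{n_0}(x)\neq h_{n_0}(y)$, and for that $n_0$ the inequality for $h_{n_0}^2$ above is strict. For every other $n$ the corresponding inequality holds (not necessarily strictly). Summing against the positive weights $2^{-n}$ preserves the strict inequality contributed by the $n_0$-term, so
\[
f(\lambda x + (1-\lambda)y) \;<\; \lambda f(x) + (1-\lambda) f(y),
\]
which is strict convexity. I do not anticipate a serious obstacle here; the only point that requires a touch of care is verifying that the sequence $(h_n)$ can be chosen to separate points, which boils down to combining separability of $\mathcal{A}$ with Hahn--Banach separation in $E$.
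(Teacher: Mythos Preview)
Your proposal is correct and follows essentially the same route as the paper: both take a countable dense family $(h_n)$ of norm-bounded continuous affine functions that separates points (via separability of $C(X)$ and Hahn--Banach), define $f=\sum_n 2^{-n}h_n^2$, and deduce strict convexity from the strict convexity of $t\mapsto t^2$ together with the existence of some $h_{n_0}$ distinguishing any given pair $x\neq y$. The only cosmetic difference is that the paper works on the unit sphere of $\mathcal{A}$ rather than the unit ball and spells out the $\epsilon/3$ estimate showing some $h_{n_0}$ separates $x$ and $y$, which is exactly the ``touch of care'' you flagged.
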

	\begin{proof}
		Let $\mathcal{A} \subseteq C(X)$ be the set of affine functions. Since $X$ is metrizable and compact, then $C(X)$ is separable by Theorem \ref{compact-metric-space-implies-c(x)-separable}. Consider
		\[ \mathcal{A}_1 = \lbrace f \in C(X, \mathbb{R}) :f \in \mathcal{A}, \mynorm{f} = 1 \rbrace.\] 
		Since $\mathcal{A}_1$ is separable, there exists a countable dense subset $D_1 = \lbrace f_n \rbrace_{n \in \mathbb{N}}$ of $\mathcal{A}_1$ . We shall construct a strictly convex function from $D_1$. Let $x_1, x_2 \in X$ such that $x_1 \neq x_2$. By \cite[Corollary 1.2.11]{kadison-vol-1}, there exists a continuous real-valued linear functional $\rho$ on $E$ with $\mynorm{\rho} =1 $ s.t $\myabs{\rho(x_1) - \rho(x_2)} > \epsilon$ for some $\epsilon > 0$. Since $\rho_{\vert X} \in \mathcal{A}_1$, there is some $f_i \in D_1$ for some $i \in \mathbb{N}$ such that $\mynorm{\rho_{\vert X} - f_i} \leq \frac{\epsilon}{3}$. Therefore, 
		\begin{equation*}
			\begin{split}
				\myabs{\rho_{\vert X}(x_1) - \rho_{\vert X}(x_2)} &\leq \myabs{\rho_{\vert X}(x_1) - f_i(x_1)} + \myabs{f_i(x_1) - f_i(x_2)} + \myabs{f_i(x_2) - \rho_{\vert X}(x_2)} \\
				&\leq \frac{2 \epsilon}{3} + \myabs{f_i(x_1) - f_i(x_2)}.
			\end{split}
		\end{equation*}
		Thus,
		\begin{equation}\label{existence-strictly-convex-function-eq1}
			\myabs{f_i(x_1) - f_i(x_2)} > \epsilon /3.
		\end{equation}
		
		Let $f = \sum_{n=1}^{\infty}  2^{-n} f_n^{2}$ and let us now show that this series is Cauchy. If $N_1, N_2 \in \mathbb{N}$ and $N_2 \geq N_1$, then
		\[ \mynorm{\sum_{n=N_1}^{N_2} 2^{-n} f_n^{2}}  \leq \sum_{n=N_1}^{N_2} 2^{-n} \mynorm{f_n^{2}} \leq \sum_{n=N_1}^{N_2} 2^{-n}. \]
		Since $C(X)$ is complete, we have $f \in C(X)$. Let $\lambda \in (0, 1)$. By (\ref{existence-strictly-convex-function-eq1}), $f_i(x_1) \neq f_i(x_2)$ and using the strict convexity of the squared function, then
		\begin{equation*}
			\begin{split}
				f_i^{2}(\lambda x_1 + (1-\lambda x_2)) &= (\lambda f_i(x_1) + (1- \lambda) f_i(x_2))^{2} \\
				&< \lambda f_i^{2}(x_1) + (1-\lambda) f_i^{2}(x_2).
			\end{split}
		\end{equation*}
		Since $f_n^{2}(\lambda x_1 + (1-\lambda x_2)) \leq \lambda f_n^{2}(x_1) + (1-\lambda) f_n^{2}(x_2)$ for any $n \in \mathbb{N}$, we have that $f$ is strictly convex.
	\end{proof}
	
	We shall now also require the following result about extreme points \cite[Proposition 1.3]{phelps}.
	
	\begin{proposition}
		If $X$ is a metrizable compact convex subset of a topological vector space, then the set of extreme points is Borel.
	\end{proposition}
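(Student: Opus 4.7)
The plan is to exhibit the complement of $\textnormal{Ext}(X)$ as an $F_\sigma$ set in $X$, so that $\textnormal{Ext}(X)$ is a $G_\delta$ and in particular Borel. Fix a compatible metric $d$ on $X$ (possible since $X$ is metrizable and compact), and recall that a point $x\in X$ fails to be extreme precisely when there exist $y,z\in X$ with $y\neq z$ and $x=(y+z)/2$. Quantifying the inequality $y\neq z$ by the metric, define, for each $n\in\mathbb{N}$,
\[
F_n \;=\; \bigl\{\,x\in X \,:\, \exists\, y,z\in X \text{ with } d(y,z)\geq 1/n \text{ and } x=(y+z)/2\,\bigr\}.
\]
Then $X\setminus \textnormal{Ext}(X)=\bigcup_{n\in\mathbb{N}} F_n$, so it suffices to prove that each $F_n$ is closed in $X$.

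To prove closedness, I would take a sequence $(x_k)\subseteq F_n$ converging to some $x\in X$ and, for each $k$, pick witnesses $y_k,z_k\in X$ with $d(y_k,z_k)\geq 1/n$ and $x_k=(y_k+z_k)/2$. Since $X$ is compact and metrizable, hence sequentially compact, I extract a subsequence along which $y_k\to y$ and $z_k\to z$ in $X$. Continuity of addition and scalar multiplication in the ambient topological vector space (restricted to $X\times X$) forces $(y_k+z_k)/2\to (y+z)/2$, while continuity of $d$ gives $d(y,z)\geq 1/n$; thus $x=(y+z)/2$ with the required separation, showing $x\in F_n$. Consequently $F_n$ is closed, the complement of $\textnormal{Ext}(X)$ is $F_\sigma$, and $\textnormal{Ext}(X)$ is a $G_\delta$, hence Borel.

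The only real subtlety is the closedness argument for $F_n$: it rests on using metrizability to pass to sequences, compactness to extract convergent subsequences of both $(y_k)$ and $(z_k)$ (without these, one would only recover the midpoint condition via nets), and the joint continuity of the midpoint map, which is what makes the topological vector space hypothesis essential. Everything else is bookkeeping on the $\sigma$-algebra structure.
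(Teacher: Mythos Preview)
Your argument is correct: writing $X\setminus\textnormal{Ext}(X)=\bigcup_n F_n$ with $F_n$ the set of midpoints of pairs at distance at least $1/n$, and proving each $F_n$ closed via sequential compactness and continuity of the midpoint map, is exactly the standard proof. The paper itself does not supply a proof of this proposition; it simply cites \cite[Proposition 1.3]{phelps}, and the argument there is essentially the one you have reproduced (Phelps phrases the closedness of $F_n$ by observing it is the continuous image of the compact set $\{(y,z)\in X\times X: d(y,z)\geq 1/n\}$ under $(y,z)\mapsto (y+z)/2$, which is a slightly slicker packaging of your sequential argument).
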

	
	We can now prove Choquet's theorem.
	\begin{theorem}[Choquet]\label{choquet-theorem}
		Suppose that $X$ is a metrizable compact convex subset of a locally convex space E, and that $x_0$ is an element of $X$. Then there is a probability measure $\mu$ on $X$ which represents $x_0$ and is supported by the extreme points of $X$.
	\end{theorem}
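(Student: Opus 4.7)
The plan is to produce $\mu$ via the Riesz–Markov–Kakutani representation theorem, by first constructing a positive linear functional $L$ on $C(X)$ that evaluates continuous affine functions at $x_0$ and, crucially, satisfies $L(f)=\overline{f}(x_0)$ for the strictly convex function $f$ furnished by Lemma \ref{existence-strictly-convex-function}. The motivation for this normalization is that strict convexity of $f$ combined with concavity of $\overline{f}$ (Proposition \ref{upper-envelope-properties}(I)) forces $f(x)<\overline{f}(x)$ at every non-extreme $x$: indeed, if $x=\lambda x_1+(1-\lambda)x_2$ with $x_1\neq x_2$, then $f(x)<\lambda f(x_1)+(1-\lambda)f(x_2)\leq \lambda\overline{f}(x_1)+(1-\lambda)\overline{f}(x_2)\leq \overline{f}(x)$. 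Consequently any probability measure satisfying $\int f\,{\rm d}\mu=\int \overline{f}\,{\rm d}\mu$ must be supported on $\textnormal{Ext}(X)$, which is Borel by the preceding proposition.

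To construct $L$, define the sublinear functional $p:C(X)\to\mathbb{R}$ by $p(g)=\overline{g}(x_0)$; sublinearity follows from Proposition \ref{upper-envelope-properties}(II). On the subspace $\mathcal{A}$ of continuous affine functions, set $L_0(h)=h(x_0)$, which equals $p(h)$ there because $\overline{h}=h$ for affine $h$ by Proposition \ref{upper-envelope-properties}(IV). The central intermediate step is to extend $L_0$ linearly to $\mathcal{A}+\mathbb{R}f$ by $L_1(h+tf)=h(x_0)+t\overline{f}(x_0)$ and check $L_1\leq p$. Using Proposition \ref{upper-envelope-properties}(III), $p(h+tf)=h(x_0)+\overline{tf}(x_0)$; for $t\geq 0$ this equals $L_1(h+tf)$ by positive homogeneity, while for $t<0$ a change of variable gives $\overline{tf}(x_0)=t\,\underline{f}(x_0)$, where $\underline{f}(x)=\sup\{g(x):g\in\mathcal{A},\,g\leq f\}$. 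Since $\underline{f}(x_0)\leq \overline{f}(x_0)$ and $t<0$, multiplication reverses the inequality to yield $L_1(h+tf)\leq p(h+tf)$. The Hahn–Banach theorem now extends $L_1$ to a linear functional $L$ on $C(X)$ with $L\leq p$.

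Positivity of $L$ is immediate: if $g\geq 0$ then $0\in\mathcal{A}$ majorizes $-g$, so $\overline{-g}\leq 0$ and $L(-g)\leq p(-g)\leq 0$; together with $L(1)=1$ this yields, via Riesz–Markov–Kakutani, a unique regular Borel probability measure $\mu$ on $X$ with $L(g)=\int_X g\,{\rm d}\mu$. For any continuous linear functional $\rho$ on $E$, the restriction $\rho|_X$ lies in $\mathcal{A}$, whence $\int \rho\,{\rm d}\mu=L(\rho|_X)=\rho(x_0)$, so $\mu$ represents $x_0$. For the support, observe that for each $h\in\mathcal{A}$ with $h\geq f$ one has $\overline{f}\leq h$, hence $\int \overline{f}\,{\rm d}\mu\leq \int h\,{\rm d}\mu=h(x_0)$; taking the infimum gives $\int \overline{f}\,{\rm d}\mu\leq \overline{f}(x_0)=L(f)=\int f\,{\rm d}\mu$. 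Combined with $\overline{f}\geq f$, this forces $\int(\overline{f}-f)\,{\rm d}\mu=0$, so $f=\overline{f}$ $\mu$-almost everywhere, and by the initial observation $\mu$ is supported on $\textnormal{Ext}(X)$.

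The main obstacle is the construction of the intermediate functional $L_1$ on $\mathcal{A}+\mathbb{R}f$ and the verification that it remains dominated by the sublinear majorant $p$; this is the single place where the asymmetry between the upper envelope $\overline{f}$ (used to value $L_1(f)$) and the lower envelope $\underline{f}$ (implicitly bounding $p$ on $-\mathbb{R}_+ f$) must be reconciled, and it is precisely this normalization that later converts the abstract Riesz measure into one concentrated on the extreme points.
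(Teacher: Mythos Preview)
Your proof is correct and follows essentially the same route as the paper: define the sublinear majorant $p(g)=\overline{g}(x_0)$, extend the evaluation functional from $\mathcal{A}$ to $\mathcal{A}+\mathbb{R}f$ with the normalization $L_1(f)=\overline{f}(x_0)$, apply Hahn--Banach and Riesz, and conclude via $\int(\overline{f}-f)\,{\rm d}\mu=0$. The only cosmetic difference is your treatment of the case $t<0$ via the lower envelope $\underline{f}$, whereas the paper observes directly that $tf$ is concave and continuous so that $\overline{tf}=tf$ by Proposition~\ref{upper-envelope-properties}(IV); both arguments yield the required inequality $t\overline{f}(x_0)\leq \overline{tf}(x_0)$.
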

	\begin{proof}
		Let $\mathcal{A} \subseteq C(X, \mathbb{R})$ be the set of affine functions.   By Lemma \ref{existence-strictly-convex-function}, there exists a strictly convex function $f \in C(X, \mathbb{R})$. Let $\phi :C(X, \mathbb{R}) \rightarrow \mathbb{R}$ be a functional such that $\phi(g) = \overline{g}(x_0)$ for $g \in C(X, \mathbb{R})$. Then by (I) of Proposition \ref{upper-envelope-properties}, $\phi$ is subadditive. On the other hand, let $\tau$ be defined on the subspace $\mathcal{A} + \mathbb{R}f$ of $C(X, \mathbb{R})$ such that 
		\[  \tau(h +r f) = h(x_0) + r \overline{f}(x_0), \quad h \in \mathcal{A}, r \in \mathbb{R}.  \]
		
		\par 
		We shall show that $\tau$ is dominated by $\phi$ so that we can extend $\tau$ to $C(X, \mathbb{R})$ by the Hahn-Banach theorem . If $r_1 \geq 0$ and $h \in \mathcal{A}$, then $\tau(h + r_1 f) = h(x_0) + r_1 \overline{f}(x_0) = h(x_0) + \overline{r_1 f}(x_0)  = \overline{h + r_1 f}(x_0) = \phi(h + r_1 f)$. If $ r_2 < 0$. Then $r_2 f$ is concave. Therefore, $\tau(h + r_2 f) = h(x_0) + r_2 \overline{f}(x_0) \leq h(x_0) + r_2 f(x_0) = h(x_0) + \overline{r_2 f}(x_0) = \overline{h + r_2 f}(x_0) = \phi(r + \alpha f)$, so $\tau$ can be extended to a linear functional $\rho_{\tau}$ on $C(X, \mathbb{R})$. 
		\par 
		Let $l \in C(X, \mathbb{R})$ be such that $l \leq 0$, then $\phi(l)  = \overline{l}(x_0) \leq 0$ which implies $ \rho_{\tau}(l) \leq \phi(l) \leq 0$, so $\rho_{\tau}$ is a positive linear functional. By using the Riesz Representation Theorem for $C(X, \mathbb{R})$, we obtain a Radon measure $\mu$ such that $\rho_{\tau}(f) = \int_{X} f \,  {\rm d} \mu $. Moreover, $1 = \rho_{\tau}(1) = \mu(X)$, so $\mu$ is a probability measure. Furthermore, if $\sigma$ is a continuous linear functional  on $E$, then $\sigma_{\vert X} \in \mathcal{A}$ which implies that $\int_{X} \sigma \,  {\rm d} \mu = \rho_{\tau}(\sigma) = \tau(\sigma) = \sigma(x_0)$. Therefore, $\mu$ represents $x_0$. 
		\par 
		We claim this is supported by the set $F = \lbrace x : f(x) = \overline{f}(x) \rbrace$ which we show is a subset of the extreme points of $X$. Since $f$ and $\overline{f}$ are measurable, it follows that $F$ is a Borel set. Let $x \in F$,  $y, z \in X$  and $\lambda \in (0, 1)$, such that $ x = \lambda y + (1 - \lambda) z$ and $y \neq z$. Since $f$ is strictly convex, we have
		\[ f(x)  < \lambda f(y) + (1- \lambda) f(z) \leq \overline{f}(\lambda y + (1 - \lambda) z ) = \overline{f}(x), \]
		which is a contradiction, so $x$ is an extreme point of $X$. Let us now show that $\mu(X/F) = 0$. Since $f \leq \overline{f}$, then $\rho_{\tau}(f) \leq \rho_{\tau}(\overline{f})$. Recall that $h \in \mathcal{A}$ and suppose that $h \geq f$. Therefore, $h \geq \overline{f}$ and $h(x_0) = \rho_{\tau}(h) \geq \rho_{\tau}(\overline{f})$. Hence, $\inf \lbrace h(x_0) : h \geq f, h \in \mathcal{A} \rbrace \geq \rho_{\tau}(\overline{f})$, so $ \rho_{\tau}(f) = \overline{f}(x_0) \geq \rho_{\tau}(\overline{f})$. This implies that $\rho_{\tau}(f) = \rho_{\tau}(\overline{f})$ which allows us to conclude
		\[  \int_{X/F} (\overline{f} - f) \, {\rm d} \mu  = 0 \implies \mu(X/F) = 0, \]
		so $\mu$ is supported by the set of extreme points which proves the theorem.
	\end{proof}
	
		\section{Operator Monotonicity And Divided Differences}
	In this section, we introduce operator monotone functions and their relationship with operator convex functions. Specifically, we demonstrate that if $f$ is operator convex and twice continuously differentiable on $(a, b) \subseteq \mathbb{R}$, then 
	\begin{equation}\label{divided-difference-function-definition}
		F_{t_1}(t) = 
		\begin{cases} 
			\frac{f(t) - f(t_1)}{t - t_1} &  t \neq t_1 \\
			f'(t_1) & t=t_1
		\end{cases}
	\end{equation}
	is operator monotone on $(a, b)$ for $t_1 \in (a, b)$. For notational convenience, we denote by $L_{t_1}$ the transformation
	\begin{equation}\label{lowner-transformation-to-divided-difference}
		L_{t_1}: f \rightarrow F_{t_1}.
	\end{equation}
	
	\subsection{Operator Monotone and Operator Convex functions}
	We shall define what we mean by operator monotone and operator convex/concave functions and we show that if $f$ is operator monotone, then $f$ is operator concave. Let us first define what we mean by operator monotone and operator convex of order $n$ where $n \in \mathbb{N}$.
	\begin{definition}
		Let $n \in \mathbb{N}$. A function $f: (a, b) \rightarrow \mathbb{R}$ on $(a, b) \subseteq \mathbb{R}$ is said to be operator monotone of order $n$ if for any $A \leq B$, then $f(A) \leq f(B)$ 	where $A, B$ are $n \times n$ Hermitian matrices and $\textnormal{sp}(A) \cup \textnormal{sp}(B) \subseteq (a,b)$.
	\end{definition}
	
	\begin{definition}
		Let $n \in \mathbb{N}$. A function $f: (a,b) \rightarrow \mathbb{R}$ on $(a, b) \subseteq \mathbb{R}$ is said to be operator convex of order $n$ if
		\[ f(tA + (1-t)B) \leq t f(A) + (1-t)f(B), \quad \forall \, t \in [0,1]\]
		where $A, B$ are elements of a $n \times n$ Hermitian matrices,  and $\textnormal{sp}(A) \cup \textnormal{sp}(B) \subseteq (a,b)$. We say that $f$ is operator concave if $-f$ is operator convex.
	\end{definition}
	Note that for any $t \in [0,1]$, $\text{sp}(tA + (1-t)B) \subseteq (a,b)$ which follows from the fact that \[ \textnormal{min}(\textnormal{sp}(A) \cup \textnormal{sp}(B))I \leq tA + (1-t)B \leq \textnormal{max}(\textnormal{sp}(A) \cup \textnormal{sp}(B))I.  \]
	This brings us to the first result \cite[Theorem 2.1]{hansen-fasttrack}.
	\begin{theorem}\label{operator-monotone-implies-operator-concave}
		If a function $f: \mathbb{R}^{+} \rightarrow \mathbb{R}$  is operator monotone of order $2n$, then $f$ is operator concave of order $n$. In particular, since $f$ is scalar concave on $\mathbb{R}^{+}$, it follows that $f$ is continuous.
	\end{theorem}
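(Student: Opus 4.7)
The plan is to deduce operator concavity of order $n$ from a Schur-complement comparison of $2n \times 2n$ matrices, which is exactly where the hypothesis ``order $2n$'' is consumed. Fix $n \times n$ Hermitian $A, B$ with spectra in $\mathbb{R}^+$ and $\lambda \in (0,1)$; write $C = \lambda A + (1-\lambda) B$, $C' = (1-\lambda) A + \lambda B$, $D = \sqrt{\lambda(1-\lambda)}(A - B)$, and consider the self-adjoint unitary
\[
U \;=\; \begin{pmatrix} \sqrt{\lambda}\,I & \sqrt{1-\lambda}\,I \\ \sqrt{1-\lambda}\,I & -\sqrt{\lambda}\,I \end{pmatrix}
\]
on $\mathbb{C}^{2n}$. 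A direct block computation yields $Y := U(A \oplus B)U = \begin{pmatrix} C & D \\ D & C' \end{pmatrix}$, so $\mathrm{sp}(Y) = \mathrm{sp}(A) \cup \mathrm{sp}(B) \subseteq \mathbb{R}^+$, and the top-left $n \times n$ block of $f(Y) = U\bigl(f(A) \oplus f(B)\bigr)U$ is exactly $\lambda f(A) + (1-\lambda) f(B)$. The theorem thus reduces to dominating this block by $f(Y_{11}) = f(C)$.

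For each $\epsilon > 0$ I would choose $\kappa \geq \|C'\| + \|D\|^2/\epsilon$ and consider $Z := \begin{pmatrix} C + \epsilon I & 0 \\ 0 & \kappa I \end{pmatrix}$. The Schur-complement criterion applied to $Z - Y = \begin{pmatrix} \epsilon I & -D \\ -D & \kappa I - C' \end{pmatrix}$ then gives $Z \geq Y$, since the positive $(1,1)$-block $\epsilon I$ has Schur complement $\kappa I - C' - D^2/\epsilon \geq 0$ by the choice of $\kappa$. Since $Y$ and $Z$ are $2n \times 2n$ Hermitian with spectra in $\mathbb{R}^+$, operator monotonicity of order $2n$ yields $f(Y) \leq f(Z) = f(C + \epsilon I) \oplus f(\kappa) I$, and extracting the $(1,1)$-block delivers
\[
\lambda f(A) + (1-\lambda) f(B) \;\leq\; f(C + \epsilon I), \qquad \epsilon > 0.
\]

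The main obstacle is passing $\epsilon \to 0^+$ without already knowing continuity of $f$. To handle this I would first specialize the preceding inequality to $A = aI$, $B = bI$ with $a, b \in \mathbb{R}^+$: sending $\epsilon \to 0^+$ and using only scalar monotonicity (operator monotonicity of order $1$) gives $\lambda f(a) + (1-\lambda) f(b) \leq \tilde f(\lambda a + (1-\lambda) b)$, where $\tilde f(x) := \lim_{y \to x^+} f(y)$ is the right-continuous regularization of $f$. Replacing $(a,b)$ by $(a + \delta, b + \delta)$ and letting $\delta \to 0^+$ upgrades this to $\lambda \tilde f(a) + (1-\lambda) \tilde f(b) \leq \tilde f(\lambda a + (1-\lambda) b)$, so $\tilde f$ is scalar concave and hence continuous on the open interval $\mathbb{R}^+$. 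But a monotone function $f$ whose right-continuous regularization $\tilde f$ is continuous can have no jumps, so $f = \tilde f$ and $f$ itself is scalar continuous. The continuous functional calculus then makes $f(C + \epsilon I) \to f(C)$ in norm as $\epsilon \to 0^+$, and the displayed inequality passes to the limit, proving operator concavity of order $n$. The ``in particular'' clause is the case $n = 1$.
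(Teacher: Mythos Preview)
Your proposal is correct and follows essentially the same route as the paper: a block unitary places $\lambda A+(1-\lambda)B$ in the top-left corner, a Schur-complement comparison yields the $\epsilon$-perturbed inequality $\lambda f(A)+(1-\lambda)f(B)\le f(C+\epsilon I)$, and the right-regularization/concavity argument establishes scalar continuity to pass $\epsilon\to 0^{+}$. The only cosmetic differences are your choice of a self-adjoint (reflection) unitary in place of the paper's rotation and your use of norm convergence via the finite-dimensional functional calculus rather than the paper's appeal to SOT continuity.
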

	\begin{proof}
		Let $A, B$ be $n \times n$ Hermitian matrices with spectrum in $\mathbb{R}^{+}$. Let $\lambda \in [0,1]$. Denote the $n \times n$ identity matrix by $\mathbb{I}_n$ and the $n \times n$ zero matrix by $0_n$. Consider the $2n \times 2n$ unitary matrix $U$ such that
		\[  U  =  		
		\begin{pmatrix}
			\lambda^{1/2} \mathbb{I}_n &   -(1-\lambda)^{1/2}\mathbb{I}_n \\
			(1-\lambda)^{1/2} \mathbb{I}_n & \lambda^{1/2} \mathbb{I}_n
		\end{pmatrix}. \]
		Furthermore,  
		\begin{align*}
			U^{*} 	\begin{pmatrix}
				A &   0_n \\
				0_n & B
			\end{pmatrix}  U 
			&= 
			\begin{pmatrix}
				\lambda^{1/2} \mathbb{I}_n & (1-\lambda)^{1/2} \mathbb{I}_n  \\
				-(1-\lambda)^{1/2}\mathbb{I}_n   & \lambda^{1/2} \mathbb{I}_n
			\end{pmatrix} \\
			&*
			\begin{pmatrix}
				\lambda^{1/2} A &   -(1-\lambda)^{1/2} A  \\
				(1-\lambda)^{1/2} B & \lambda^{1/2} B
			\end{pmatrix} \\
			&=
			\begin{pmatrix}
				\lambda A + (1-\lambda)B &   \lambda^{1/2}(1-\lambda)^{1/2}(B-A) \\
				\lambda^{1/2}(1-\lambda)^{1/2}(B-A) & (1-\lambda)A + \lambda B
			\end{pmatrix}. 
		\end{align*}
		
		Let $C = -\lambda^{1/2}(1-\lambda)^{1/2}(B-A), t \geq \max \lbrace \mynorm{A}, \mynorm{B} \rbrace$ and $\epsilon > 0$. It is clear that $t \geq \mynorm{(1-\lambda)A + \lambda B}$ which implies that $2t - \mynorm{(1-\lambda)A + \lambda B} \geq t$. Therefore, 
		\begin{align*}
			\begin{pmatrix}
				\lambda A + (1-\lambda)B + \epsilon \mathbb{I}_n &  0_n\\
				0_n & 2t \mathbb{I}_n
			\end{pmatrix} - U^{*} 	
			\begin{pmatrix}
				A &   0_n \\
				0_n & B
			\end{pmatrix}  U \geq 
			\begin{pmatrix}
				\epsilon \mathbb{I}_n  &  C  \\
				C & t \mathbb{I}_n
			\end{pmatrix} = \mathcal{C}_{\epsilon, t}.
		\end{align*}
		By using Schur complements, we will establish necessary conditions for $\mathcal{C}_{\epsilon, t}$ to be positive semi-definite.
		\begin{align*}
			&\begin{pmatrix}
				\mathbb{I}_n  &  t^{-1}C  \\
				0_n & \mathbb{I}_n
			\end{pmatrix} 
			\begin{pmatrix}
				\epsilon\mathbb{I}_n - t^{-1}C^{2} &  0_n \\
				0_n & t \mathbb{I}_n
			\end{pmatrix} 
			\begin{pmatrix}
				\mathbb{I}_n &  t^{-1}C \\
				0_n  & \mathbb{I}_n
			\end{pmatrix}^{*} \\
			& = 
			\begin{pmatrix}
				\mathbb{I}_n  &  t^{-1}C  \\
				0_n & \mathbb{I}_n
			\end{pmatrix} 
			\begin{pmatrix}
				\epsilon\mathbb{I}_n - t^{-1}C^{2} &  0_n \\
				0_n & t \mathbb{I}_n
			\end{pmatrix} 
			\begin{pmatrix}
				\mathbb{I}_n &  0_n \\
				t^{-1}C & \mathbb{I}_n
			\end{pmatrix} \\
			& = 
			\begin{pmatrix}
				\mathbb{I}_n  &  t^{-1}C  \\
				0_n & \mathbb{I}_n
			\end{pmatrix} 
			\begin{pmatrix}
				\epsilon\mathbb{I}_n - t^{-1}C^{2} &  0_n \\
				C & t \mathbb{I}_n
			\end{pmatrix} \\
			& = 
			\begin{pmatrix}
				\epsilon \mathbb{I}_n  &  C  \\
				C & t \mathbb{I}_n
			\end{pmatrix}.
		\end{align*}
		Therefore, if $ \epsilon\mathbb{I}_n - t^{-1}C^{2} \geq0 $ or $ \epsilon \geq t^{-1}\mynorm{C}^{2}$, then $\mathcal{C}_{\epsilon, t}$ is positive semi-definite. If $t$ is large enough to also satisfy this inequality, we obtain
		\begin{align}\label{monotone-is-concave-eq1}
			\begin{pmatrix}
				\lambda A + (1-\lambda)B + \epsilon \mathbb{I}_n & 0_n \\
				0_n & 2t \mathbb{I}_n
			\end{pmatrix} \geq 
			U^{*} 	
			\begin{pmatrix}
				A &   0_n \\
				0_n & B
			\end{pmatrix}  U. 
		\end{align} 
		
		Since the  spectrum of $A, B$ consists of a finite number of points, then we can use polynomial interpolation to write $f$ as a polynomial.   Then for any unitary matrix $U$, $f(U^{*}AU) = U^{*}f(A)U$. Moreover, since for any block diagonal matrix
		\[ 	
		\begin{pmatrix}
			A &  0_n  \\
			0_n &  B
		\end{pmatrix}^{k} 
		= 	 	
		\begin{pmatrix}
			A^{k} &  0_n  \\
			0_n &  B^{k}
		\end{pmatrix}, 
		\]
		we can conclude that
		\begin{equation*}
			f\begin{pmatrix}
				A &  0_n  \\
				0_n &  B
			\end{pmatrix}
			= 	 	
			\begin{pmatrix}
				f(A) &  0_n  \\
				0_n &  f(B)
			\end{pmatrix}.
		\end{equation*} 
		Therefore, 
		\begin{equation}\label{monotone-is-concave-eq3}
			\begin{split}
				&f \bigg(  U^{*} 	\begin{pmatrix}
					A &   0_n \\
					0_n & B
				\end{pmatrix}  U \bigg) = U^{*} \begin{pmatrix}
					f(A) &   0_n \\
					0_n & f(B)
				\end{pmatrix} U \\
				&= \begin{pmatrix}
					\lambda f(A) + (1- \lambda)f(B) &   \lambda^{1/2}(1- \lambda)^{1/2}(f(B) - f(A))  \\
					\lambda^{1/2}(1- \lambda)^{1/2}(f(B) - f(A)) & (1- \lambda)f(A) + \lambda f(B)
				\end{pmatrix}.
			\end{split}
		\end{equation}
		By combining (\ref{monotone-is-concave-eq1}) and (\ref{monotone-is-concave-eq3}) and using the fact that $f$ is operator monotone of order $2n$, 
		we obtain
		\begin{align*}
			&\begin{pmatrix}
				\lambda f(A) + (1- \lambda)f(B) &   \lambda^{1/2}(1- \lambda)^{1/2}(f(B) - f(A))  \\
				\lambda^{1/2}(1- \lambda)^{1/2}(f(B) - f(A)) & (1- \lambda)f(A) + \lambda f(B)
			\end{pmatrix} \\  	
			&\leq \begin{pmatrix}
				f(\lambda A + (1-\lambda)B + \epsilon \mathbb{I}_n) &  0_n\\
				0_n & f(2t \mathbb{I}_n)
			\end{pmatrix}.
		\end{align*} 		
		Therefore,
		\begin{equation}\label{monotone-is-concave-eq4}
			\lambda f(A) + (1-\lambda) f(B) \leq f(\lambda A + (1-\lambda) B + \epsilon\mathbb{I}_n), \quad \forall\, \epsilon > 0.
		\end{equation}
		Let $x_1, x_2 > 0$. Since $f(\delta \mathbb{I}_n) = f (\delta) \mathbb{I}_n$ for any $\delta > 0$,   the above inequality implies that
		\begin{align}\label{monotone-is-concave-eq5}
			\lambda f(x_1)+ (1-\lambda) f(x_2) \leq f(\lambda x_1 + (1-\lambda) x_2 + \epsilon), \quad \forall\, \epsilon > 0.
		\end{align}
		Since $f$ is monotone, we can define the right hand limit $f^{+}$ by setting 
		\[ f^{+}(x) = \lim_{\epsilon \rightarrow 0+} f(x + \epsilon), \quad x > 0. \]
		By definition of $f^{+}$, we have
		\begin{equation}\label{monotone-is-concave-eq6}
			\lambda f^{+} (x_1) + (1-\lambda) f^{+} (x_2) \leq \lambda f(x_1 + \epsilon) + (1-\lambda) f(x_2 + \epsilon), \quad \forall \, \epsilon > 0.
		\end{equation}
		By combining, (\ref{monotone-is-concave-eq5}) and (\ref{monotone-is-concave-eq6}), we can conclude that
		\begin{equation*}
			\lambda f^{+} (x_1) + (1-\lambda) f^{+} (x_2) \leq f(\lambda x_1 + (1-\lambda)x_2 + 3 \epsilon), \quad \forall \, \epsilon > 0.
		\end{equation*}
		As $\epsilon \rightarrow 0$ in the above, we obtain
		\[ \lambda f^{+}(x_1) + (1-\lambda) f^{+}(x_2) \leq f^{+}(\lambda x_1 + (1-\lambda) x_2). \]
		Therefore, $f^{+}$ is concave on $(0, \infty)$ which implies it is continuous. Since $f$ is monotone increasing, it follows that 
		\[ f^{+}(t- \epsilon) \leq f(t) \leq f^{+}(t),  \quad 0 < \epsilon < t. \]
		As $\epsilon \rightarrow 0$ in the above, we obtain $f^{+} = f$ which implies that $f$ is continuous. By \cite[Proposition 5.3.2]{kadison-vol-1}, $f$ is s.o.t continuous on bounded sets of self-adjoint operators, so as $\epsilon \rightarrow 0$ in (\ref{monotone-is-concave-eq4}),  $\lambda f(A) + (1-\lambda) f(B) \leq f(\lambda A + (1-\lambda) B)$.
	\end{proof}
	
	Let us now define what we mean by an operator monotone function.
	\begin{definition}
		\index{Operator!monotone} A function $f: (a, b) \rightarrow \mathbb{R}$ on $(a, b) \subseteq \mathbb{R}$ is said to be operator monotone if for any $A \leq B$, then $f(A) \leq f(B)$ 	where $A, B$ are elements of a $C^{*}$-algebra, and $\textnormal{sp}(A) \cup \textnormal{sp}(B) \subseteq (a,b)$.
	\end{definition}
	It is natural to ask whether operator monotone functions can be characterized by being operator monotone of order $n$ for any $n \in \mathbb{N}$ \cite[Theorem 2.5]{bendat}.
	
	\begin{theorem}\label{operator-monotone-characterized-by-finite-orders}
		A function $ f: (a,b) \rightarrow \mathbb{R} $ on $(a, b) \subseteq \mathbb{R}$ is operator monotone iff  $f$ is operator monotone of order $n$ for any $n \in \mathbb{N}$.
	\end{theorem}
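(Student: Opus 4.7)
The plan is to prove both implications separately. The forward direction is immediate: for each $n \in \mathbb{N}$ the matrix algebra $M_n(\mathbb{C})$ is itself a $C^*$-algebra, so if $f$ is operator monotone in the full $C^*$-algebraic sense, it is automatically operator monotone of order $n$ for every $n$.

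For the converse, suppose $f$ is operator monotone of order $n$ for each $n \in \mathbb{N}$. By the GNS construction, every $C^*$-algebra embeds isometrically as a $*$-subalgebra of $B(H)$ for some Hilbert space $H$, and such an embedding preserves both the order relation and the spectrum of every self-adjoint element; it therefore suffices to show that for self-adjoint $A, B \in B(H)$ with $A \leq B$ and $\textnormal{sp}(A) \cup \textnormal{sp}(B) \subseteq (a, b)$ we have $f(A) \leq f(B)$. Observe first that the proof of Theorem \ref{operator-monotone-implies-operator-concave} adapts essentially verbatim from $\mathbb{R}^+$ to any open interval (it uses only unitary conjugation and polynomial functional calculus), so the hypothesis forces $f$ to be scalar concave, hence continuous, on $(a, b)$.

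The heart of the argument is a finite-rank compression device. Fix any $c \in (a, b)$ and let $\{P_\alpha\}$ denote the upward-directed net of finite-rank orthogonal projections on $H$, which converges to $I$ in SOT. For each index $\alpha$, set
\[ A_\alpha = P_\alpha A P_\alpha + c(I - P_\alpha), \qquad B_\alpha = P_\alpha B P_\alpha + c(I - P_\alpha). \]
On $P_\alpha H$, the compressions $P_\alpha A P_\alpha|_{P_\alpha H}$ and $P_\alpha B P_\alpha|_{P_\alpha H}$ are Hermitian matrices in $M_{d_\alpha}(\mathbb{C})$ with $d_\alpha = \dim P_\alpha H$; their spectra lie in a common compact subinterval $[a_1, b_1] \subseteq (a, b)$ determined by the spectra of $A$ and $B$, and the inequality $A \leq B$ yields the corresponding matrix inequality. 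Invoking the hypothesis at order $d_\alpha$ gives $f(P_\alpha A P_\alpha|_{P_\alpha H}) \leq f(P_\alpha B P_\alpha|_{P_\alpha H})$, and combining with the trivial equality $f(c)(I - P_\alpha) = f(c)(I - P_\alpha)$ on the complementary summand yields $f(A_\alpha) \leq f(B_\alpha)$ as operators on $H$.

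The final step is to pass to the SOT limit. A direct computation shows $A_\alpha \to A$ and $B_\alpha \to B$ in SOT, the operator norms are uniformly bounded, and all spectra remain inside a fixed compact subset of $(a, b)$. By \cite[Proposition 5.3.2]{kadison-vol-1} (already invoked in the proof of Theorem \ref{operator-monotone-implies-operator-concave}), the continuous functional calculus is SOT-continuous on bounded sets of self-adjoint operators, so $f(A_\alpha) \to f(A)$ and $f(B_\alpha) \to f(B)$ in SOT; evaluating the inequality $f(A_\alpha) \leq f(B_\alpha)$ at each vector $\xi$ and taking the limit yields $f(A) \leq f(B)$. The principal obstacle here is not any single technical step but rather the fact that $A$ and $B$ need not commute, which rules out any direct spectral-theoretic reduction to the scalar setting; the compression-plus-continuity mechanism is precisely what sidesteps this by reducing everything to matrix inequalities where the finite-order hypothesis can be applied.
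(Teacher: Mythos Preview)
Your argument is correct and follows the same strategy as the paper: compress $A$ and $B$ by a net of finite-rank projections, apply the order-$n$ hypothesis on each finite-dimensional block, and pass to the SOT limit via \cite[Proposition 5.3.2]{kadison-vol-1}. Your device of adding $c(I-P_\alpha)$ is in fact a refinement of the paper's version, which simply writes $f(P_d A P_d)$ without ensuring that the spectrum (which picks up $0$ from the complement of $P_d H$) remains inside $(a,b)$. One small caveat: the proof of Theorem~\ref{operator-monotone-implies-operator-concave} does \emph{not} adapt verbatim to a bounded interval, since it requires $2t\in(a,b)$ for arbitrarily large $t$; the paper glosses over exactly the same point when it invokes that theorem here, and the gap is easily closed (for instance by a fractional-linear change of variable reducing to the half-line, or by citing the classical fact that $2$-matrix-monotone functions on any interval are already $C^1$).
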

	\begin{proof}
		Suppose $f$ is operator monotone. Since an $n \times n$ Hermitian matrix is a bounded operator from $\mathbb{C}^{n}$ to $\mathbb{C}^{n}$, then it is clear that an operator monotone function is operator monotone of order $n$ for any $n \in \mathbb{N}$. 
		\par
		Suppose $f$ is operator monotone of order $n$ for any $n \in \mathbb{N}$. Let $A \leq B$ and $\textnormal{sp}(A) \cup \textnormal{sp}(B) \subseteq (a,b)$.  Consider the map $\alpha: \mathscr{H} \rightarrow \mathscr{H}$ such that
		\[  \alpha(\lbrace x_1, \ldots, x_n \rbrace) = \lbrace x_1, \ldots, x_n, Ax_1, \ldots, Ax_n, Bx_1, \ldots, Bx_n \rbrace. \]
		Let $D = \lbrace \alpha(\lbrace x_1, \ldots, x_n \rbrace): \lbrace x_1, \ldots, x_n \rbrace \subseteq \mathscr{H} \rbrace$ be a directed set under the usual order of subset inclusion. Consider the map $\gamma: D \rightarrow $ \mybh\, such that $\gamma(d) = P_d$ where $P_d$ is the projection on the subspace generated by $d \in D$.
		Naturally, $P_{\gamma(d)} A P_{\gamma(d)} \leq P_{\gamma(d)} B P_{\gamma(d)} $. Furthermore, since $P_{\gamma(d)}  B(\mathscr{H}) P_{{\gamma(d)} }$ is $\ast$-isomorphic to a finite direct sum of matrix algebras over $\mathbb{C}$, we have
		\begin{equation}\label{operator-monotone-characterized-by-finite-orders-eq1}
			f (P_{\gamma(d)} A P_{\gamma(d)}) \leq  f (P_{\gamma(d)} B P_{\gamma(d)}).
		\end{equation}
		It is clear that $ \displaystyle{\lim_{d \in D} P_{\gamma(d)} A P_{\gamma(d)} \rightarrow A}$ and $ \displaystyle{\lim_{d \in D} P_{\gamma(d)} B P_{\gamma(d)} \rightarrow B}$ in s.o.t.
		By Theorem \ref{operator-monotone-implies-operator-concave} and \cite[Proposition 5.3.2]{kadison-vol-1}, $f $ is s.o.t continuous on bounded sets of self-adjoint operators, so $f(A) \leq f(B)$ as we take limits in (\ref{operator-monotone-characterized-by-finite-orders-eq1}).
	\end{proof}
	
	This leads us to defining operator convex functions.
	\begin{definition}
		\index{Operator!convex} \index{Operator!concave}  A  function $f: (a,b) \rightarrow \mathbb{R}$ on $(a, b) \subseteq \mathbb{R}$ is said to be operator convex if
		\[ f(tA + (1-t)B) \leq t f(A) + (1-t)f(B), \quad \forall \, t \in [0,1]\]
		where $A, B$ are elements of a $C^{*}$-algebra,  and $\textnormal{sp}(A) \cup \textnormal{sp}(B) \subseteq (a,b)$. Furthermore, $f$ is said to be operator concave if $-f$ is operator convex.
	\end{definition}
	
	Similar to Theorem \ref{operator-monotone-characterized-by-finite-orders}, we have the following theorem \cite[Lemma 3.1]{bendat}.
	\begin{theorem}\label{operator-concave-characterized-by-finite-orders}
		A continuous function $ f: (a,b) \rightarrow \mathbb{R} $ on $(a, b) \subseteq \mathbb{R} $ is operator convex iff  $f$ is operator convex of order $n$ for any $n \in \mathbb{N}$.
	\end{theorem}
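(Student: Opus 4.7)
The forward direction is immediate: for each $n$, the algebra $M_n(\mathbb{C})$ of complex $n\times n$ matrices is a $C^{*}$-algebra, so any function that is operator convex in the $C^{*}$-algebra sense is automatically operator convex of every order $n\in\mathbb{N}$.

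For the reverse direction my plan mirrors the strategy of Theorem~\ref{operator-monotone-characterized-by-finite-orders}: I would compress $A$, $B$, and $C:=tA+(1-t)B$ onto a directed net of finite-dimensional subspaces, apply the finite-order hypothesis inside each compression, and then take strong operator topology limits. By passing to a faithful representation I may assume $A,B\in\mathscr{B}(\mathscr{H})$; the estimate displayed just after the definition of operator convexity of order $n$ forces $\textnormal{sp}(C)\subseteq(a,b)$. I reuse the directed set $D$ from the proof of Theorem~\ref{operator-monotone-characterized-by-finite-orders}, built from $\alpha(\{x_1,\dots,x_n\})=\{x_i,Ax_i,Bx_i\}$ (so that $Cx_i$ automatically lies in the resulting subspace), and let $P_d$ be the projection onto the subspace generated by $d\in D$. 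By construction $P_dAP_d\to A$, $P_dBP_d\to B$, $P_dCP_d\to C$ in SOT, and $P_dCP_d=tP_dAP_d+(1-t)P_dBP_d$.

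The one extra subtlety compared with the monotone case is that $(a,b)$ need not contain $0$, so $f(P_dAP_d)$ is not automatically defined in $\mathscr{B}(\mathscr{H})$. To sidestep this I fix any $c\in(a,b)$ and work with the modifications
\[
\tilde A_d:=P_dAP_d+c(I-P_d),\quad \tilde B_d:=P_dBP_d+c(I-P_d),\quad \tilde C_d:=P_dCP_d+c(I-P_d),
\]
each of which has spectrum inside $(a,b)$, satisfies $\tilde C_d=t\tilde A_d+(1-t)\tilde B_d$, and still converges in SOT to $A$, $B$, $C$ respectively. On $P_d\mathscr{H}$ the algebra $P_d\mathscr{B}(\mathscr{H})P_d$ is $\ast$-isomorphic to $M_n(\mathbb{C})$ with $n=\dim P_d\mathscr{H}$, and operator convexity of order $n$ delivers the matrix inequality $f(P_dCP_d)\leq tf(P_dAP_d)+(1-t)f(P_dBP_d)$; on $(P_d\mathscr{H})^{\perp}$ the three tilde-operators all reduce to $cI$, so the convexity inequality holds there as the trivial equality $f(c)I=tf(c)I+(1-t)f(c)I$. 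Gluing the two pieces gives $f(\tilde C_d)\leq tf(\tilde A_d)+(1-t)f(\tilde B_d)$ on all of $\mathscr{H}$.

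To finish, I invoke the continuity of $f$ together with \cite[Proposition 5.3.2]{kadison-vol-1} to obtain SOT continuity of the functional calculus on bounded families of self-adjoint operators whose spectra lie in a fixed compact subinterval of $(a,b)$; passing to the SOT limit in the previous inequality then yields $f(C)\leq tf(A)+(1-t)f(B)$, which is the required operator convexity. The main obstacle I anticipate is the bookkeeping needed to keep the spectra of $\tilde A_d,\tilde B_d,\tilde C_d$ uniformly inside some compact $[a',b']\subset(a,b)$ so that the functional-calculus continuity genuinely applies, which is handled by choosing $[a',b']$ to contain $\textnormal{sp}(A)\cup\textnormal{sp}(B)\cup\textnormal{sp}(C)\cup\{c\}$.
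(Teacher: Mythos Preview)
Your proposal is correct and follows exactly the approach the paper intends: the paper omits the proof, saying only that it is similar to that of Theorem~\ref{operator-monotone-characterized-by-finite-orders}, and your argument is precisely that adaptation---compress to finite-dimensional invariant pieces, apply the finite-order hypothesis, and pass to the SOT limit via \cite[Proposition 5.3.2]{kadison-vol-1}. Your modification $\tilde A_d=P_dAP_d+c(I-P_d)$ is in fact more careful than the paper's own treatment in Theorem~\ref{operator-monotone-characterized-by-finite-orders}, since it guarantees the spectra remain in $(a,b)$ even when $0\notin(a,b)$, a point the paper glosses over.
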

	The proof is omitted due to its similarity with the proof of Theorem \ref{operator-monotone-characterized-by-finite-orders}. Finally, we arrive at the theorem we set out to prove in this subsection. 
	\begin{theorem}\label{operatpr-monotone-implies-operator-concave}
		If a function $f: \mathbb{R}^{+} \rightarrow \mathbb{R}$ is operator monotone, then $f$ is operator concave.
	\end{theorem}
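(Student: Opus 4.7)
The plan is to chain together the three earlier results of this subsection. The strategy is to pass from the general $C^*$-algebraic notion of operator monotonicity down to the matrix-order version (via Theorem \ref{operator-monotone-characterized-by-finite-orders}), invoke the half-order reduction to get operator concavity in each finite matrix order (via Theorem \ref{operator-monotone-implies-operator-concave}), and then lift that matrix-order concavity back up to the $C^*$-algebraic level (via Theorem \ref{operator-concave-characterized-by-finite-orders}, applied to $-f$).

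In detail, I would proceed as follows. First, since $f$ is operator monotone, Theorem \ref{operator-monotone-characterized-by-finite-orders} tells us that $f$ is operator monotone of order $n$ for every $n \in \mathbb{N}$. In particular, for each fixed $n$, $f$ is operator monotone of order $2n$, so Theorem \ref{operator-monotone-implies-operator-concave} yields that $f$ is operator concave of order $n$. Because $n$ was arbitrary, $f$ is operator concave of every finite matrix order. Finally, an application of Theorem \ref{operator-concave-characterized-by-finite-orders} (in its concave incarnation, i.e. applied to $-f$) promotes this to operator concavity of $f$ on any $C^*$-algebra.

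The only subtle point — and the item I expect to flag as the place that needs care rather than a true obstacle — is that Theorem \ref{operator-concave-characterized-by-finite-orders} requires $f$ to be continuous before we may pass from operator convexity of every finite order to operator convexity on general $C^*$-algebras. This hypothesis is, however, supplied for free by Theorem \ref{operator-monotone-implies-operator-concave}, which explicitly records that operator monotonicity of order $2n$ on $\mathbb{R}^+$ forces $f$ to be scalar concave and hence continuous. So continuity is already in hand the moment we invoke Theorem \ref{operator-monotone-implies-operator-concave}, and the hypothesis of Theorem \ref{operator-concave-characterized-by-finite-orders} is automatically satisfied.

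There is essentially no new computation; the proof is a short bookkeeping argument that glues together the matrix-order version with the two characterization theorems. No additional lemmas or constructions beyond what is already available in the excerpt are needed.
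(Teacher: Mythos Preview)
Your proposal is correct and follows exactly the same route as the paper's proof: invoke Theorem~\ref{operator-monotone-characterized-by-finite-orders} to descend to all finite orders, apply Theorem~\ref{operator-monotone-implies-operator-concave} to obtain operator concavity of each order, and then use Theorem~\ref{operator-concave-characterized-by-finite-orders} to lift back to the $C^{*}$-algebraic level. Your explicit remark that continuity (needed for Theorem~\ref{operator-concave-characterized-by-finite-orders}) is supplied by Theorem~\ref{operator-monotone-implies-operator-concave} is a nice clarification that the paper leaves implicit.
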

	\begin{proof}
		If $f$ is operator monotone, then $f$ is operator monotone of order $n$ for any $n \in \mathbb{N}$ by Theorem \ref{operator-monotone-characterized-by-finite-orders}. By Theorem \ref{operator-monotone-implies-operator-concave}, $f$ is operator concave of order $n$ for any $n \in \mathbb{N}$. Consequently, Theorem \ref{operator-concave-characterized-by-finite-orders} allows us to conclude that $f$ is operator concave.
	\end{proof}
	
	\subsection{The Chain Rule and Divided Differences}
	The goal of this section is to use the chain rule for matrices and the concept of divided differences to show that if $f: (a, b) \rightarrow \mathbb{R}$ on $(a, b) \subseteq \mathbb{R}$ is operator convex and twice continuously differentiable, then $L_{t_1}f$ (recall (\ref{lowner-transformation-to-divided-difference})) is operator monotone on $(a, b)$ for $t_1 \in (a, b)$.  Let $f: (a, b) \rightarrow \mathbb{R}$ be continuously differentiable and $t_2 \in (a, b)$. Then the first divided difference $\Delta f(t_1, t_2)$ is defined as
	\[
	\Delta f (t_1,t_2) = 
	\begin{cases} 
		\frac{f(t_2) - f(t_1)}{t_2 - t_1} &  t_1 \neq t_2 \\
		f'(t_1) & t_1=t_2
	\end{cases}.
	\]
	Let $\lbrace t_1, \ldots, t_n \rbrace \subseteq (a, b)$. Then we can define the corresponding $n \times n$ \emph{matrix of first divided differences} as  
	\[ [\Delta f(t_i, t_j)]_{i, j = 1}^{n}  = \begin{bmatrix}
		\Delta f(t_1, t_1) & \ldots & \Delta f(t_1, t_n) \\
		\vdots & \ddots  & \vdots \\
		\Delta f(t_n, t_1) & \ldots & \Delta f(t_n, t_n)
	\end{bmatrix}.\]
	\noindent
	It can be easily seen that the above matrix is symmetric. In addition, we shall also consider the second divided difference. Let $f: (a, b) \rightarrow \mathbb{R}$ be a twice continuously differentiable function on $(a, b) \subseteq \mathbb{R}$. Let $t_1, t_2, t_3 \in (a, b)$  be distinct. Then the second divided difference is defined as
	\begin{equation}\label{second-divided-difference-equation}
		\Delta^{2} f(t_1, t_2, t_3) = \frac{\Delta f(t_3, t_1) - \Delta f(t_2, t_1) }{t_3 - t_2}.
	\end{equation}
	For repeated values, $\Delta^{2} f(t_1, t_2, t_2) = \lim_{t \rightarrow t_2} \Delta^{2} f(t_1, t_2, t)$. In particular,
	\[ \Delta^{2} f(t_1, t_1, t_1)  = \lim_{t \rightarrow t_1} \Delta^{2} f(t_1, t_2, t) \\ = \lim_{s \rightarrow t_1} \lim_{t \rightarrow t_1} \Delta^{2} f(t_1, s, t). \]
	This leads us to the chain rule for matrices \cite[Theorem 6.6.30]{horn-book}. However, we require the following  clarifications first.
	\begin{enumerate}[label=\textnormal{(\Roman*)}]
		\item The derivative of an $n \times n$ matrix is the entrywise derivative. 
		\item $H_n(a,b)$ denotes the set of $n \times n$ Hermitian matrices with spectrum in the open real interval $(a, b)$.
		\item Let $A, B$  be arbitrary $n \times n$ matrices. Then $A \circ B$ denotes the Schur product (or entrywise multiplication).
	\end{enumerate}
	
	\begin{theorem}[Chain rule]\label{chain-rule-for-matrices}
		Let $f :(a, b) \rightarrow \mathbb{R}$ and $\gamma: (c,d) \rightarrow H_n(a, b)$ be functions on $(a, b), (c,d) \subseteq \mathbb{R}$ such that $\textnormal{sp}(\gamma(t)) \subseteq (a,b)$ for all $t \in (c, d)$. Let $\gamma(t) = U(t) D(t) U(t)^{*}$, where $U$ is a unitary matrix, and $D=\textnormal{diag}(\lambda_1(t),  \ldots, \lambda_n(t))$ for all $t \in (c,d)$. Then, the following hold
		\begin{enumerate}[label=\textnormal{(\Roman*)}]
			\item If $f$ and $\gamma$ are continuously differentiable, then $f \circ \gamma$ is continuously differentiable with derivative 
			\[ \frac{d}{dt} f(\gamma(t)) = U(t) \bigg( [\Delta f(\lambda_i(t), \lambda_j(t))]_{i, j = 1}^{n} \circ [U(t)^{*} \gamma'(t)U(t)] \bigg) U(t)^{*}  \]
			for all $t \in (c, d)$.
			\item  If $f$ and $\gamma$ are twice continuously differentiable, then $f \circ \gamma$ is twice continuously differentiable. Let $c_k(t)$ denote the $k$'th column of $U(t)^{*}\gamma'(t)U(t)$, then the second derivative of $f \circ \gamma$ is
			\begin{equation*}
				\begin{split}
					\frac{d^{2}}{dt^{2}} f(\gamma(t)) = U(t) &\bigg( 2 \sum_{k=1}^{N} [\Delta^{2}f(\lambda_i(t), \lambda_j(t), \lambda_k(t))]^{n}_{i,j=1} \circ (c_k(t) c_k^{*}(t))   \\
					&+[\Delta f(\lambda_i(t), \lambda_j(t))]_{i, j = 1}^{n} \circ (U(t)^{*} \gamma''(t) U(t)) \bigg)U(t)^{*}
				\end{split}
			\end{equation*}
			for all $t \in (c, d)$.
		\end{enumerate}
	\end{theorem}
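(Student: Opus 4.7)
The plan is to reduce both formulas to polynomial $f$ by uniform approximation, and to verify them for $f(x)=x^n$ by a direct computation in the eigenbasis of $\gamma$ at a fixed point. For (I) with $f(x)=x^n$, the Leibniz rule yields $(\gamma(t)^n)' = \sum_{a+b=n-1}\gamma(t)^a\gamma'(t)\gamma(t)^b$. Fixing $t_0$ and setting $U_0=U(t_0)$, $D_0=\textnormal{diag}(\lambda_1,\ldots,\lambda_n)$, and $B=U_0^*\gamma'(t_0)U_0$, conjugation by $U_0^*$ turns the $(i,j)$-entry of the sum into $B_{ij}\sum_{a+b=n-1}\lambda_i^a\lambda_j^b$. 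This geometric sum equals $(\lambda_i^n-\lambda_j^n)/(\lambda_i-\lambda_j)=\Delta f(\lambda_i,\lambda_j)$ when $\lambda_i\neq\lambda_j$ and $n\lambda_i^{n-1}=\Delta f(\lambda_i,\lambda_i)$ otherwise, so the entry is precisely $\Delta f(\lambda_i,\lambda_j)\cdot B_{ij}$---the Schur-product formula of (I).

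To pass from polynomials to general continuously differentiable $f$, fix an arbitrary $t_0$ and choose a compact interval $K\subset(a,b)$ containing $\textnormal{sp}(\gamma(t))$ for every $t$ in a neighborhood $J$ of $t_0$ (possible by continuity of the spectrum). Approximate $f|_K$ in $C^1$-norm by polynomials $p_n$ (apply Weierstrass to $f'$ and integrate). Then $p_n(\gamma(t))\to f(\gamma(t))$ and the right-hand side of the polynomial formula applied to $p_n$ converges to the analogous expression for $f$, both uniformly for $t\in J$, since the assignment $f\mapsto[\Delta f(\lambda_i(t),\lambda_j(t))]$ is continuous in the $C^1$ norm of $f$ and the eigenvalues $\lambda_i(t)$ depend continuously on $t$. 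Interchanging derivative and limit then gives (I) for $f$ and shows $f\circ\gamma\in C^1$.

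Part (II) proceeds analogously. For polynomials $f(x)=x^n$, differentiating once more yields a $\gamma''$-contribution $\sum_{a+b=n-1}\gamma^a\gamma''\gamma^b$---which delivers the second summand of the formula by the identical computation as in (I)---together with two $\gamma'\gamma'$-contributions that combine into $2\sum_{a+b+c=n-2}\gamma^a\gamma'\gamma^b\gamma'\gamma^c$. Its $(i,j)$-entry in the $U_0$-basis is
\[
2\sum_k\Bigl(\sum_{a+b+c=n-2}\lambda_i^a\lambda_k^b\lambda_j^c\Bigr)B_{ik}B_{kj}=2\sum_k h_{n-2}(\lambda_i,\lambda_k,\lambda_j)\,(c_k c_k^*)_{ij},
\]
where $h_{n-2}$ is the complete homogeneous symmetric polynomial of degree $n-2$, $c_k$ is the $k$-th column of $B$, and Hermiticity of $B$ yields $B_{ik}B_{kj}=(c_k c_k^*)_{ij}$. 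The classical identity $h_{n-2}(x,y,z)=\Delta^2(t\mapsto t^n)(x,y,z)$, extended to coincident arguments by continuity, matches this with the first summand of (II) for $f(t)=t^n$. A $C^2$-uniform polynomial approximation of $f$ on $K$ then extends (II) to general twice continuously differentiable $f$.

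The main obstacle is the combinatorial bookkeeping in (II)---tracking both placements of $\gamma'$ in $(\gamma^n)''$, recognizing the resulting monomial sum as $h_{n-2}$ and hence as a second divided difference, and rewriting $B_{ik}B_{kj}$ as $(c_k c_k^*)_{ij}$ via Hermiticity of $B$. Once these identifications are in place, the polynomial case of (I) is immediate and the $C^k$-uniform approximation step is routine.
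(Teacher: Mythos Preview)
The paper does not prove this theorem; it merely cites it as \cite[Theorem 6.6.30]{horn-book} and uses it as a black box. So there is no in-paper proof to compare against.

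Your argument is correct and is essentially the standard route (and close to the treatment in Horn--Johnson): verify the formula for monomials $f(t)=t^n$ by the Leibniz expansion and an eigenbasis computation, recognize the resulting power sums as first and second divided differences (the identity $h_{n-2}(x,y,z)=\Delta^2(t\mapsto t^n)(x,y,z)$ is exactly the right ingredient), and then pass to general $f$ by $C^1$/$C^2$ polynomial approximation on a compact spectral interval. The Hermiticity step $B_{ik}B_{kj}=B_{ik}\overline{B_{jk}}=(c_kc_k^*)_{ij}$ is correctly identified.

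One point worth making explicit when you write this up: the diagonalizing unitary $U(t)$ need not depend continuously on $t$ (eigenvector discontinuities at eigenvalue crossings), so you should not argue continuity of the right-hand side directly from continuity of its constituents. Your approximation argument already handles this correctly---each $(p_n\circ\gamma)'$ is manifestly continuous (no diagonalization needed for polynomials), and the uniform limit is therefore continuous---but it would be good to say so, since otherwise a reader may wonder why the right-hand side is continuous in $t$.
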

	
	An application of the chain rule allows us to characterize operator monotone functions of order $n$. This is given in the next theorem \cite[Thoerem 6.6.36]{horn-book}.
	\begin{theorem}\label{first-divided-difference-positive-implies-matrix-monotone}
		Let $f:(a,b) \rightarrow \mathbb{R}$ be a continuously differentiable function on $(a, b ) \subseteq \mathbb{R}$. Then $f$ is operator monotone of order $n$ iff 
		\[  [\Delta f(t_i, t_j)]_{i,j=1}^{n} \geq 0, \quad \forall \, t_1, \ldots,  t_n \in (a, b).\]
	\end{theorem}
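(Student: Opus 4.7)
The proof in both directions rests on Theorem~\ref{chain-rule-for-matrices}(I) applied to a one-parameter family of Hermitian matrices, combined with the classical Schur product theorem (the Hadamard product of two positive semi-definite matrices is positive semi-definite). In the forward direction one perturbs a diagonal matrix and uses the first-order derivative formula; in the converse one interpolates linearly between $A$ and $B$ and integrates a non-negative derivative.

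\textbf{Forward direction.} Fix $t_1, \dots, t_n \in (a,b)$ and assume first that these values are distinct. Put $A = \mathrm{diag}(t_1, \dots, t_n)$. For any $H = H^* \geq 0$, the openness of $(a,b)$ guarantees the existence of $\epsilon_0 > 0$ with $\mathrm{sp}(A + \epsilon H) \subseteq (a, b)$ for all $\epsilon \in [0, \epsilon_0]$. Operator monotonicity of order $n$ forces $f(A + \epsilon H) \geq f(A)$, so the difference quotient is positive semi-definite, and hence so is its limit $\left.\tfrac{d}{d\epsilon}\right|_{\epsilon = 0^+} f(A + \epsilon H)$. Since $A$ has simple spectrum, the path $\gamma(\epsilon) = A + \epsilon H$ admits a $C^1$ spectral decomposition with $U(0) = I$, and Theorem~\ref{chain-rule-for-matrices}(I) evaluates this derivative as $[\Delta f(t_i, t_j)]_{i,j=1}^n \circ H$. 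Taking $H = e e^*$ with $e = (1, \dots, 1)^\top$ collapses the Hadamard product to $[\Delta f(t_i, t_j)]_{i,j=1}^n$ itself, yielding positivity. For repeated $t_i$'s one approximates by distinct tuples and uses the continuity of $\Delta f$, which is inherited from the assumption $f \in C^1$.

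\textbf{Converse direction.} Assume $[\Delta f(s_i, s_j)]_{i,j=1}^n \geq 0$ for every $n$-tuple in $(a,b)$. Let $A \leq B$ be $n \times n$ Hermitian with spectra in $(a,b)$ and set $\gamma(t) = A + t(B - A)$ for $t \in [0, 1]$. The bracketing
\[
\min\bigl(\mathrm{sp}(A) \cup \mathrm{sp}(B)\bigr)\, I \;\leq\; \gamma(t) \;\leq\; \max\bigl(\mathrm{sp}(A) \cup \mathrm{sp}(B)\bigr)\, I
\]
shows $\mathrm{sp}(\gamma(t)) \subseteq (a,b)$ for every $t \in [0,1]$. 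Theorem~\ref{chain-rule-for-matrices}(I) then gives
\[
\tfrac{d}{dt} f(\gamma(t)) = U(t) \bigl( [\Delta f(\lambda_i(t), \lambda_j(t))]_{i,j=1}^n \circ [U(t)^* (B-A) U(t)] \bigr) U(t)^*.
\]
The first factor of the Hadamard product is positive semi-definite by hypothesis, while the second is positive semi-definite because $B - A \geq 0$; the Schur product theorem makes their Hadamard product positive semi-definite, and unitary conjugation preserves this. Hence $\tfrac{d}{dt} f(\gamma(t)) \geq 0$, and integrating $f(B) - f(A) = \int_0^1 \tfrac{d}{dt} f(\gamma(t))\, dt$ yields $f(A) \leq f(B)$.

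\textbf{Main obstacle.} The most delicate point in both directions is the existence of a $C^1$ spectral decomposition $\gamma(t) = U(t) D(t) U(t)^*$ through points where eigenvalues of $\gamma(t)$ collide, since continuous selection of eigenvectors need not persist at such collisions. A clean workaround is to perturb $A, B$ (respectively $A, H$) so that the eigenvalues of $\gamma(t)$ remain pairwise distinct on a dense open subset of the parameter interval, execute the chain-rule argument in that generic situation, and then recover the general case by a limiting argument — justified by continuity of $\Delta f$ together with the fact that $f$ is continuous (by Theorem~\ref{operator-monotone-implies-operator-concave}) and the functional calculus is s.o.t.-continuous on bounded sets of self-adjoint operators.
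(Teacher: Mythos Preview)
Your proposal is correct and follows essentially the same route as the paper: both directions use Theorem~\ref{chain-rule-for-matrices}(I) together with the Schur product theorem, with the converse obtained by integrating the derivative along $\gamma(t)=(1-t)A+tB$ and the forward direction obtained by perturbing a diagonal matrix $D=\mathrm{diag}(t_1,\dots,t_n)$ and reading off the derivative at the diagonal point. The only cosmetic difference in the forward direction is that the paper perturbs by $xx^*$ for an arbitrary $x$ and then tests the resulting positive operator against the all-ones vector to recover $\langle[\Delta f]x,x\rangle\ge 0$, whereas you perturb directly by $ee^*$ (the all-ones matrix) so that the Hadamard product collapses to $[\Delta f]$ itself; your version is marginally more direct, and your explicit handling of repeated $t_i$ by continuity is a point the paper leaves implicit.
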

	\begin{proof}
		Suppose the matrix of first divided differences is positive semi-definite. Let $A, B \in H_n(a, b)$ and $A \leq B$. Let $\epsilon_1 > 0$ such that $\text{sp}((1-t)A + tB) \subseteq (a, b)$ for $t \in (-\epsilon_1, 1 + \epsilon_1)$. Consider $\gamma(t) = (1-t)A + tB$ for $t \in (-\epsilon_1, 1 + \epsilon_1)$. Furthermore, $\gamma'(t) = B-A$ for $t \in (-\epsilon_1, 1 + \epsilon_1)$, so by (I) of Theorem \ref{chain-rule-for-matrices}, we have 
		\begin{align*}
			\begin{split}
				&f(B)-f(A) = \int_{0}^{1} \frac{d}{dt} f(\gamma(t)) \, \rm{d}t \\
				&= \int_{0}^{1} U(t) \bigg( [\Delta f(\lambda_i(t), \lambda_j(t))]_{i,j=1}^{n} \circ (U(t)^{*} (B-A) U(t)) \bigg)U(t)^{*} \, dt.
			\end{split}
		\end{align*}
		Since  $[\Delta f(\lambda_i(t), \lambda_j(t))]_{i,j=1}^{n}  \geq 0$ and $U(t)^{*} (B-A) U(t) \geq 0$, the Schur product is positive semi-definite as well. This implies that the integral is the limit of sums of positive semi-definite matrices, so $f(A) \leq f(B)$.
		\par 
		Suppose $f$ is operator monotone of order $n$. Let $x = (x_1, \ldots, x_n) \in \mathbb{C}^{n}$ and let $D = \text{diag}(t_1, \ldots, t_n)$ where $\lbrace t_1, \ldots, t_n \rbrace \subseteq (a, b)$. Let $\epsilon_2 > 0$ such that $A = D - \epsilon_2 (xx^{*})$ and  $B  = D + \epsilon_2 (xx^{*})$ are elements of $H_n(a,b)$. Let $\gamma(t) = (1-t) A + tB$ for $t \in (0,1)$. Let us now apply (I) of Theorem \ref{chain-rule-for-matrices} for $f'(\gamma(1/2))$.  Since  $\gamma(1/2)$ is a diagonal matrix, it follows that $U(1/2) = \mathbb{I}_n$. Furthermore,  $\gamma^{'}(1/2)= 2 \epsilon_2 (x x^{*})$. These two observations imply that
		\begin{equation*}\label{horn-th-6.6.36-eq1}
			(f \circ \gamma)'(1/2)  = \frac{d}{dt} f(\gamma(t)) \vert_{t=1/2} = [\Delta f(\lambda_i, \lambda_j)]_{i,j=1}^{n} \circ 2 \epsilon_2(x x^{*}).
		\end{equation*}

		
		Moreover, if $t \geq s$ for $t, s \in (0,1)$, then 
		\[  \gamma(t) - \gamma(s) =  (1-t)A + tB - (1-s)A + sB   = (t-s)(B-A).  \]
		However, $B-A = 2 \epsilon_2 xx^{*}$ is positive semi-definite since $\myinnerproduct{(xx^{*})y}{y} = y^{*}xx^{*}y = (y^{*}x)(y^{*}x)^{*} = \myabs{y^{*}x}^{2}$. Thus,  $\gamma(t) \geq \gamma(s)$ and since $f$ is operator monotone of order $n$, then $f(\gamma(t)) \geq f(\gamma(s))$. This allows us to conclude that
		\[ \frac{d}{dt} f(\gamma(t)) \vert_{t=1/2} = \lim_{h \rightarrow 0+} \frac{f(\gamma(1/2 + h)) - f(\gamma(1/2))}{h} \]
		is positive semi-definite. Consider $y = (1, \ldots, 1) \in \mathbb{R}^{n}$. Then 
		\begin{equation}\label{first-divided-difference-positive-implies-matrix-monotone-eq1}
			0 \leq \bigg \langle \frac{(f \circ \gamma)'(1/2)}{2 \epsilon_2} y, y  \bigg \rangle . 
		\end{equation}
		Computing $y^{*}([\Delta f(\lambda_i, \lambda_j)]_{i,j=1}^{n} \circ (x x^{*}))y$, we get
		\begin{align*}
			\begin{pmatrix}
				1, \ldots, 1
			\end{pmatrix} 
			&\begin{bmatrix}
				\Delta f(\lambda_1, \lambda_1) x_1 x^{*}_1 & \Delta f(\lambda_1, \lambda_2) x_1 x^{*}_2 & \ldots & \Delta f(\lambda_1, \lambda_n) x_1 x^{*}_n \\
				\vdots & \vdots & \vdots & \vdots \\
				\Delta f(\lambda_n, \lambda_1) x_n x^{*}_1 & \Delta f(\lambda_n, \lambda_2) x_n x^{*}_2 & \ldots & \Delta f(\lambda_n, \lambda_n) x_n x^{*}_n
			\end{bmatrix}
			\begin{pmatrix}
				1 \\
				\vdots \\
				1
			\end{pmatrix} 
			\\
			&= 		
			\begin{pmatrix}
				1, \ldots, 1
			\end{pmatrix} 
			\begin{pmatrix}
				x_1 \sum_{i=1}^{n} \Delta f(\lambda_1, \lambda_i) x^{*}_i \\
				\vdots \\
				x_n \sum_{i=1}^{n} \Delta f(\lambda_n, \lambda_i) x^{*}_i
			\end{pmatrix} 
			= \sum_{j=1}^{n} x^{*}_j \sum_{i=1}^{n} \Delta f(\lambda_j, \lambda_i) x_i
			\\
			&= \myinnerproduct{[\Delta f(\lambda_i, \lambda_j)]_{i,j=1}^{n} x}{x}.
		\end{align*}		
		Therefore, by combining the above and (\ref{first-divided-difference-positive-implies-matrix-monotone-eq1}), we conclude that \[[\Delta f(\lambda_i, \lambda_j)]_{i,j=1}^{n} \circ (x x^{*}) \] is positive semi-definite.
	\end{proof}
	
	Let us now proceed to prove a relationship between the matrix of second divided differences and convex matrix functions \cite[Page 544]{horn-book}. 
	
	\begin{lemma}\label{lemma-for-second-difference}
		Let $f:(a, b) \rightarrow \mathbb{R}$ be a twice continuously differentiable function on $(a, b) \subseteq \mathbb{R}$. Let $A, B \in H_n(a, b)$ and $\gamma(t) = (1-t)A + tB$ for all $t \in (0,1)$. If $f$ is operator convex of order $n$, then
		\[  (f \circ \gamma)''(t) \geq 0, \quad \forall \, t \in (0,1).  \]
	\end{lemma}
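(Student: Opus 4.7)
My plan is to reduce the matrix-valued statement to a family of scalar statements indexed by vectors, and then use the classical fact that a $C^2$ scalar convex function has non-negative second derivative.

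\medskip

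First I would observe that $\gamma$ is affine: for any $s, u \in (0,1)$ and $\lambda \in [0,1]$,
\[
\lambda \gamma(s) + (1-\lambda)\gamma(u) \;=\; \gamma\bigl(\lambda s + (1-\lambda)u\bigr).
\]
Since $\gamma(t) \in H_n(a,b)$ for every $t \in (0,1)$ and $f$ is operator convex of order $n$, applying the definition of operator convexity to $\gamma(s)$ and $\gamma(u)$ yields
\[
(f \circ \gamma)\bigl(\lambda s + (1-\lambda)u\bigr) \;\leq\; \lambda (f \circ \gamma)(s) + (1-\lambda)(f \circ \gamma)(u),
\]
so $f \circ \gamma$ is a matrix-valued convex function on $(0,1)$.

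\medskip

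Next I would fix an arbitrary vector $x \in \mathbb{C}^n$ and define the scalar function $h_x(t) = \langle (f \circ \gamma)(t)\, x, x\rangle$ on $(0,1)$. Taking the inner product with $x$ on both sides of the matrix convexity inequality above (which preserves the order since the inner product is a positive linear functional on Hermitian matrices) shows that $h_x$ is convex in the usual scalar sense. By Theorem~\ref{chain-rule-for-matrices}(II), $f \circ \gamma$ is twice continuously differentiable entrywise, so $h_x$ is itself $C^2$ on $(0,1)$ with
\[
h_x''(t) \;=\; \bigl\langle (f \circ \gamma)''(t)\, x,\, x \bigr\rangle.
\]

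\medskip

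The final step is the classical observation that a $C^2$ convex function $h_x$ on an open interval satisfies $h_x''(t) \geq 0$ everywhere; this follows from writing $h_x''(t)$ as the limit of the symmetric second difference $h^{-2}\bigl(h_x(t+h) - 2 h_x(t) + h_x(t-h)\bigr)$, whose numerator is non-negative by midpoint convexity. Therefore $\bigl\langle (f \circ \gamma)''(t)\, x,\, x \bigr\rangle \geq 0$ for every $x \in \mathbb{C}^n$, and since $(f \circ \gamma)''(t)$ is Hermitian (the second derivative of a Hermitian-matrix-valued function), this gives $(f \circ \gamma)''(t) \geq 0$ for all $t \in (0,1)$.

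\medskip

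I do not expect any real obstacle here: the only point requiring a bit of care is the passage from matrix convexity to scalar convexity of $h_x$, which is routine because the inner-product functional is positive and linear; everything else is either the hypothesis of operator convexity, the affinity of $\gamma$, or a standard calculus fact.
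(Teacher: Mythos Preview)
Your proposal is correct and follows essentially the same approach as the paper: define the scalar function $\phi_x(t)=\langle f(\gamma(t))x,x\rangle$, use operator convexity of $f$ (together with the affinity of $\gamma$) to deduce that $\phi_x$ is convex, invoke Theorem~\ref{chain-rule-for-matrices}(II) to get $C^2$-regularity, and conclude $\langle (f\circ\gamma)''(t)x,x\rangle\ge 0$ for all $x$. Your write-up is simply a bit more explicit about the intermediate observations (affinity of $\gamma$, positivity of the inner-product functional, Hermiticity of the derivative), but the argument is the same.
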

	\begin{proof}
		Let $x \in \mathbb{C}^{n}$ and $\phi_x(t) = \myinnerproduct{f(\gamma(t))x}{x}$ for all $t \in (0,1)$. By (II) of Theorem \ref{chain-rule-for-matrices}, $f \circ \gamma$ is twice continuously differentiable, so $\phi_x$ is twice continuously differentiable. Since $f$ is operator convex of order $n$, then $\phi_x$ is a convex function. Therefore, the second derivative of $\phi_x$ is non-negative which implies that
		\begin{equation*}
			0 \leq \phi_x^{''}(t) = \bigg\langle (f \circ \gamma)''(t) x, x \bigg\rangle, \quad \forall \, t \in (0,1).
		\end{equation*}
		Since the above holds for any arbitrary $x \in \mathbb{C}^{n}$, the result follows.
	\end{proof}
	
	We are now in a position to prove \cite[Theorem 6.6.52 (1)]{horn-book}.
	
	\begin{theorem}\label{matrix-convex-implies-second-divided-difference-positive}
		Let $f: (a, b) \rightarrow \mathbb{R}$ be a twice continuously differentiable function on $(a, b) \subseteq \mathbb{R}$. If $f$ is operator convex of order $n$, then
		\[  [\Delta^{2}f(t_i, t_j, t_1)]_{i, j = 1}^{n} \geq 0, \quad \forall \,  t_1, \ldots, t_n \in (a, b).\]
	\end{theorem}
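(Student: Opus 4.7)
The strategy is to apply the chain rule (Theorem~\ref{chain-rule-for-matrices}(II)) together with Lemma~\ref{lemma-for-second-difference} to a carefully chosen linear curve $\gamma(t) = (1-t)A + tB$, and then extract the target matrix $[\Delta^{2} f(t_i, t_j, t_1)]_{i,j=1}^{n}$ from the resulting positive-semidefinite expression by a limiting argument. I first assume that $t_1, \ldots, t_n$ are pairwise distinct; the general case will then follow by the continuity of $\Delta^{2} f$ in its arguments.

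The concrete choice is $A := \textnormal{diag}(t_1, \ldots, t_n) \in H_n(a, b)$ and $B := A + \eta\, v v^{*}$, where $v = (v_1, \ldots, v_n) \in \mathbb{R}^{n}$ has every component nonzero and $\eta > 0$ is small enough that $B \in H_n(a, b)$. Since $H_n(a,b)$ is convex, the segment $\gamma([0,1])$ lies in $H_n(a,b)$, and Lemma~\ref{lemma-for-second-difference} together with continuity of $(f \circ \gamma)''$ at the endpoint yields $(f \circ \gamma)''(0) \geq 0$. The key simplification happens at $t = 0$: since $\gamma(0) = A$ is diagonal, I may take $U(0) = \mathbb{I}_n$ with $\lambda_i(0) = t_i$; moreover $\gamma'(0) = \eta v v^{*}$, whose $k$-th column equals $\eta v_k v$, and $\gamma''(t) \equiv 0$. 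Plugging into the chain rule, the first-divided-difference term vanishes and the formula collapses to
\[
(f \circ \gamma)''(0) \,=\, 2\eta^{2} \sum_{k=1}^{n} v_{k}^{2}\, [\Delta^{2} f(t_i, t_j, t_k)]_{i,j=1}^{n} \circ (v v^{*}).
\]

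Reading off the $(i,j)$ entry, the right-hand side factors as $2\eta^{2} D_v M D_v$, where $D_v := \textnormal{diag}(v_1, \ldots, v_n)$ is invertible and $M_{ij} := \sum_{k} v_{k}^{2} \Delta^{2} f(t_i, t_j, t_k)$; since congruence by the invertible real diagonal $D_v$ preserves positivity, one obtains $M \geq 0$. Finally, specializing to $v = (1, \epsilon, \epsilon, \ldots, \epsilon)$ (shrinking $\eta$ as needed to keep $B$ in $H_n(a,b)$) gives
\[
[\Delta^{2} f(t_i, t_j, t_1)]_{i,j=1}^{n} + \epsilon^{2} \sum_{k=2}^{n} [\Delta^{2} f(t_i, t_j, t_k)]_{i,j=1}^{n} \,\geq\, 0,
\]
and letting $\epsilon \to 0^{+}$ (the PSD cone is closed) yields the desired inequality. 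The case of repeated $t_i$ is then handled by perturbing to distinct values and passing to the limit.

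The main obstacle is the bookkeeping inside the chain rule: one must arrange both $U(0) = \mathbb{I}_n$ (forcing $A$ to be diagonal) and $\gamma''(t) \equiv 0$ (forcing $\gamma$ to be linear) so that only the second-divided-difference sum survives, and one must then design $v$ asymmetrically enough to pick out a single index $k = 1$ in the limit while keeping $D_v$ invertible throughout.
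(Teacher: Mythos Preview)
Your proof is correct and follows essentially the same approach as the paper: apply the chain rule (II) along a linear path at the moment $\gamma$ is diagonal (so $U=\mathbb{I}_n$ and $\gamma''=0$), then use a limiting parameter to isolate the index $k=1$ in the sum $\sum_k [\Delta^2 f(t_i,t_j,t_k)]\circ(c_k c_k^*)$. The only cosmetic differences are that the paper perturbs by the rank-two matrix $X$ (with parameter $L\to\infty$) and evaluates at $t=1/2$, whereas you use a rank-one perturbation $vv^*$ with $v=(1,\epsilon,\ldots,\epsilon)$ and evaluate at $t=0$; your congruence-by-$D_v$ step and the initial reduction to distinct $t_i$ are harmless extras (the latter is in fact unnecessary, since the chain-rule formula holds pointwise without any simplicity assumption on the eigenvalues).
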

	\begin{proof}
		Let $D = \text{diag}(t_1, \ldots, t_n) \in H_n (a, b)$ and $L > 0$. Let $X$ be an $n \times n$  matrix such that
		\[ X = 
		\begin{bmatrix}
			L & 1 & \ldots & 1 \\
			1 & 0 & \ldots & 0 \\
			\vdots & \vdots & \ddots & \vdots \\
			1 & 0 & \ldots & 0
		\end{bmatrix}.
		\]
		Consider $A = D - \epsilon X$ and $B = D + \epsilon X$ such that $\epsilon > 0$ is small enough so that $A, B \in H_n(a, b)$. Let $\gamma(t) = (1-t)A + tB$ for all $t \in (0,1)$. Therefore, $\gamma(1/2) = D $ and $\gamma'(1/2) = 2 \epsilon X$. Let $c_k$ denote the $k$'th column of $X$. Let us now apply (II) of Theorem \ref{chain-rule-for-matrices} for $(f \circ \gamma)^{''}(1/2)$. Since  $\gamma(1/2)$ is a diagonal matrix, then $U(1/2) = \mathbb{I}_n$ where $\mathbb{I}_n$ denotes the $n \times n$ identity matrix. Furthermore,  $\gamma^{''}(1/2)=0$ which allows us to conclude that 
		\begin{equation*}
			(f \circ \gamma)''(1/2) = \frac{d^{2}}{d t^{2}}f(\gamma(t))\vert_{t=1/2} = 8 \epsilon^{2} \sum_{k=1}^{n} [\Delta^{2} f(t_i, t_j, t_k)]_{i,j = 1}^{n} \circ  \big( c_k c_k^{*} \big).
		\end{equation*}
		
		Let us now compute the matrices $c_k c_k^{*}$ for $1 \leq k \leq n$.
		\[  c_1 c_1^{*} =  		\begin{bmatrix}
			L^{2} & L & \ldots & L \\
			L & 1 & \ldots & 1 \\
			\vdots & \vdots & \ddots & \vdots \\
			L & 1 & \ldots & 1
		\end{bmatrix},   c_j c_j^{*} =  		\begin{bmatrix}
			1 & 0 & \ldots & 0 \\
			0 & 0 & \ldots & 0 \\
			\vdots & \vdots & \ddots & \vdots \\
			0 & 0 & \ldots & 0
		\end{bmatrix}, \quad 1 < j \leq n. \]
		Let $(x_1, \ldots, x_n) \in \mathbb{C}^{n}$ be a unit vector and $y = (x_1/L, \ldots, x_n)$. By Lemma \ref{lemma-for-second-difference}, $(f \circ \gamma)''(1/2)$ is positive semi-definite, so
		\[ 0 \leq \bigg\langle \frac{(f \circ \gamma)''(1/2)}{8 \epsilon^{2}} y,y \bigg\rangle = x^{*} [\Delta^{2} f(t_i, t_j, t_1)]_{i,j = 1}^{n} x + \sum_{k=2}^{n} \Delta^{2} f(t_1, t_1, t_k) \frac{\myabs{x_1}^{2}}{L^{2}}. \] 
		As $L \rightarrow \infty$, the result is obtained.
	\end{proof}
	
	\begin{proposition}\label{second-divided-difference-positivity-implies-first-derivative-positive}
		Let  $f: (a, b) \rightarrow \mathbb{R}$ be a twice continuously differentiable function on $(a, b) \subseteq \mathbb{R}$ and recall the transformation $L_{t_1}f$ in (\ref{lowner-transformation-to-divided-difference}), then
		\[ [\Delta^{2}f(t_i, t_j, t_1)]_{i, j = 2}^{n} = [\Delta L_{t_1}f(t_i, t_j)]_{i, j = 2}^{n},\]
		where $t_1, \ldots, t_n \in (a, b)$.
	\end{proposition}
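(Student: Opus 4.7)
The identity must be established entrywise: for every pair $(i,j)$ with $2 \le i,j \le n$, I need $\Delta^2 f(t_i, t_j, t_1) = \Delta(L_{t_1} f)(t_i, t_j)$. My overall strategy is to verify this first on the open dense set of configurations where $t_1, t_i, t_j$ are pairwise distinct (via a direct algebraic expansion), and then extend to the coincident cases via the limiting definitions for repeated arguments set out just after equation (\ref{second-divided-difference-equation}).

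For the generic case, the plan is a routine calculation. Starting from the right-hand side,
\[
\Delta(L_{t_1} f)(t_i, t_j) = \frac{1}{t_j - t_i}\left( \frac{f(t_j) - f(t_1)}{t_j - t_1} - \frac{f(t_i) - f(t_1)}{t_i - t_1} \right),
\]
I clear the common denominator $(t_j - t_i)(t_j - t_1)(t_i - t_1)$ and collect terms by $f(t_i), f(t_j), f(t_1)$; the result is the familiar symmetric form
\[
\frac{f(t_i)}{(t_i - t_j)(t_i - t_1)} + \frac{f(t_j)}{(t_j - t_i)(t_j - t_1)} + \frac{f(t_1)}{(t_1 - t_i)(t_1 - t_j)}.
\]
Unfolding the paper's definition $\Delta^2 f(t_i, t_j, t_1) = (\Delta f(t_1, t_i) - \Delta f(t_j, t_i))/(t_1 - t_j)$ and performing the analogous bookkeeping produces exactly the same symmetric rational expression, so the two sides agree whenever $t_1, t_i, t_j$ are pairwise distinct.

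For the degenerate cases ($t_i = t_j$, $t_i = t_1$, or $t_j = t_1$, including the triple coincidence $t_i = t_j = t_1$) I would pass to limits on both sides. On the right, this amounts to differentiating $F_{t_1}$, which is continuously differentiable on all of $(a,b)$: at any $t \neq t_1$ this is immediate, and at $t = t_1$ a second-order Taylor expansion $f(t) = f(t_1) + f'(t_1)(t - t_1) + \tfrac12 f''(t_1)(t - t_1)^2 + o((t-t_1)^2)$ gives $F_{t_1}(t) = f'(t_1) + \tfrac12 f''(t_1)(t - t_1) + o(t - t_1)$, so $F_{t_1}'(t_1) = \tfrac12 f''(t_1)$ exists and matches the one-sided limits of $F_{t_1}'(t)$ as $t \to t_1$. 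On the left, the same Taylor expansion shows that the generic rational formula, interpreted via the iterated-limit definition given in the paper, produces the same value. The only place that requires any real care is the triple coincidence $t_i = t_j = t_1$, where one has to verify that both iterated limits converge to $\tfrac12 f''(t_1)$; this is pure Taylor-remainder bookkeeping, and it is the single spot where the $C^2$ hypothesis is genuinely exploited. Beyond this, the proposition is a calculational identity with no serious obstacle.
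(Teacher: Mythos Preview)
Your proposal is correct and follows essentially the same route as the paper: reduce both sides to the symmetric partial-fraction form $\sum f(t_\ast)/\prod(t_\ast - \cdot)$ when the three arguments are distinct, then extend to coincident arguments by limits. The paper's treatment of the degenerate cases is a touch slicker—having established the symmetry, it rewrites $\Delta^2 f(s,t,t_1)$ directly as $(L_{t_1}f(t)-L_{t_1}f(s))/(t-s)$ and passes to the iterated limit, which collapses all the coincidence cases at once rather than via the Taylor-expansion case analysis you outline—but this is cosmetic.
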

	\begin{proof}
		Let $t_k, t_l, t_m \in (a,b)$  be distinct. We claim that $ \Delta^{2} f(t_k, t_l, t_m) = \Delta^{2} f(\pi(1), \pi(2), \pi(3))$ for any permutation $\pi$ of $t_k, t_l, t_m$. We recall (\ref{second-divided-difference-equation}) to expand $\Delta^{2}f(t_k, t_l, t_m)$.
		\begin{align*}
			\Delta^{2}f(t_k, t_l, t_m) &= \frac{(f(t_m) - f(t_k))(t_l - t_k) -  (f(t_l) - f(t_k))(t_m-t_k)}{(t_m - t_k)(t_l-t_k)(t_m - t_l)} \\
			&= \frac{f(t_m)(t_l - t_k) + f(t_k)(t_m-t_l) + f(t_l)(t_k - t_m)}{(t_m - t_k)(t_l-t_k)(t_m - t_l)} \\
			&= \frac{f(t_m)}{(t_m - t_k)(t_m - t_l)} + \frac{f(t_k)}{(t_k - t_l)(t_k - t_m)} + \frac{f(t_l)}{(t_l - t_k)(t_l - t_m)}.
		\end{align*}
		Therefore, our claim is proved. Consider $\Delta^{2} f(t_i, t_j, t_1)$ where $t_i, t_j, t_1 \in (a,b)$ and $t_i, t_j, t_1$ are not necessarily distinct. 
		\begin{align*}
			&\Delta^{2} f(t_i, t_j, t_1) = \lim_{s \rightarrow t_i} \lim_{t \rightarrow t_j} \Delta^{2} f(s, t, t_1) = \lim_{s \rightarrow t_i} \lim_{t \rightarrow t_j} \frac{\Delta f(t, t_1) - \Delta f(s, t_1) }{t-s} \\
			&= \lim_{s \rightarrow t_i} \lim_{t \rightarrow t_j} \frac{L_{t_1}f(t) - L_{t_1}f(s)}{t - s} 
			= \lim_{s \rightarrow t_i} \frac{L_{t_1}f(t_j) - L_{t_1}f(s)}{t_j - s} =  \Delta L_{t_1}f(t_i, t_j).
		\end{align*}
	\end{proof}
	
	Finally, we prove the concluding result of this section regarding the transformation in (\ref{lowner-transformation-to-divided-difference}).
	\begin{theorem}\label{concavity-implies-divided-difference-is-operator-monotone}
		If $f: (a, b) \rightarrow \mathbb{R}$ is twice continuously differentiable and operator convex on $(a, b) \subseteq \mathbb{R}$, then $L_{t_1}f$ is operator monotone for any $t_1 \in (a, b)$.
	\end{theorem}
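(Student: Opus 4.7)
The plan is to reduce operator monotonicity of $L_{t_1}f$ to operator monotonicity of every finite order via Theorem \ref{operator-monotone-characterized-by-finite-orders}, then for each $n$ use Theorem \ref{first-divided-difference-positive-implies-matrix-monotone} to convert the problem into checking positive semi-definiteness of the matrix of first divided differences of $L_{t_1}f$. Since $f$ is $C^{2}$, a Taylor expansion of $f$ at $t_1$ shows that $F_{t_1}'(t) \to \tfrac{1}{2} f''(t_1)$ as $t \to t_1$, so $L_{t_1}f$ is continuously differentiable on $(a,b)$ and Theorem \ref{first-divided-difference-positive-implies-matrix-monotone} does apply.

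Fix $n \in \mathbb{N}$ and points $s_1, \ldots, s_n \in (a,b)$. The goal is $[\Delta L_{t_1}f(s_i, s_j)]_{i,j=1}^{n} \geq 0$. I would enlarge the list to $u_1 := t_1$ and $u_{k+1} := s_k$ for $1 \leq k \leq n$, consisting of $n+1$ points in $(a,b)$. Proposition \ref{second-divided-difference-positivity-implies-first-derivative-positive} applied to this list with distinguished point $u_1 = t_1$ gives the identity
\[ [\Delta^{2} f(u_i, u_j, u_1)]_{i,j=2}^{n+1} \;=\; [\Delta L_{t_1}f(u_i, u_j)]_{i,j=2}^{n+1} \;=\; [\Delta L_{t_1}f(s_i, s_j)]_{i,j=1}^{n}, \]
so the target matrix is nothing but the lower-right $n \times n$ principal submatrix of the $(n+1) \times (n+1)$ matrix of second divided differences $[\Delta^{2} f(u_i, u_j, u_1)]_{i,j=1}^{n+1}$.

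To conclude I would invoke operator convexity of $f$: by Theorem \ref{operator-concave-characterized-by-finite-orders} the hypothesis upgrades to operator convexity of every finite order, in particular of order $n+1$. Theorem \ref{matrix-convex-implies-second-divided-difference-positive} then gives $[\Delta^{2} f(u_i, u_j, u_1)]_{i,j=1}^{n+1} \geq 0$, and since every principal submatrix of a positive semi-definite matrix is positive semi-definite, the identity above yields $[\Delta L_{t_1}f(s_i, s_j)]_{i,j=1}^{n} \geq 0$. Theorem \ref{first-divided-difference-positive-implies-matrix-monotone} now shows $L_{t_1}f$ is operator monotone of order $n$; as $n$ was arbitrary, Theorem \ref{operator-monotone-characterized-by-finite-orders} finishes the argument.

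The main obstacle is essentially organizational rather than technical: one has to arrange the point list so that the distinguished base point of the $L_{t_1}$ transformation coincides with the distinguished slot $t_1$ of the second divided difference in Proposition \ref{second-divided-difference-positivity-implies-first-derivative-positive}, and then recognise the resulting matrix as a principal submatrix of the full $(n+1) \times (n+1)$ block to which Theorem \ref{matrix-convex-implies-second-divided-difference-positive} supplies positivity. Once this alignment is set up, the only real analytic input is the Taylor-expansion verification that $L_{t_1}f$ is $C^{1}$ across $t_1$.
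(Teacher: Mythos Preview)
Your proposal is correct and follows essentially the same route as the paper's proof: combine Theorem \ref{matrix-convex-implies-second-divided-difference-positive} with Proposition \ref{second-divided-difference-positivity-implies-first-derivative-positive} to get positivity of the first divided difference matrix of $L_{t_1}f$, then apply Theorem \ref{first-divided-difference-positive-implies-matrix-monotone} and Theorem \ref{operator-monotone-characterized-by-finite-orders}. You have in fact been more careful than the paper in spelling out the principal-submatrix step, the explicit invocation of Theorem \ref{operator-concave-characterized-by-finite-orders} to pass to order $n+1$, and the Taylor-expansion check that $L_{t_1}f$ is $C^{1}$ so that Theorem \ref{first-divided-difference-positive-implies-matrix-monotone} genuinely applies.
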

	\begin{proof}
		By Theorem \ref{matrix-convex-implies-second-divided-difference-positive} and Proposition \ref{second-divided-difference-positivity-implies-first-derivative-positive}, the first divided difference matrix of $L_{t_1}f$ is positive semi-definite for any $n \in \mathbb{N}$. By Theorem \ref{first-divided-difference-positive-implies-matrix-monotone}, $L_{t_1}f$ is operator monotone of order $n$ for any $n \in \mathbb{N}$ which implies $L_{t_1}f$ is operator monotone by Theorem \ref{operator-monotone-characterized-by-finite-orders}.
	\end{proof}
	
		\section{L\"owner's Theorem}
	\subsection{Introduction}
	L\"owner's theorem is pivotal for the development of Kubo-Ando theory. Therefore, we shall provide a proof by following the arguments in \cite{hansen-fasttrack}. The theorem provides an integral representation for positive operator monotone functions. Namely, a positive operator monotone function $f: \mathbb{R}^{+} \rightarrow \mathbb{R}^{+}$ is a function such that $A \leq B \implies f(A) \leq f(B)$ where $A, B \in$ \mybhplusplus\,.
	We state the theorem as found in \cite[Theorem 4.9]{hansen-fasttrack}.
	\begin{theorem}\label{lowner-theorem-from-hansen}
		Let $f$ be a positive operator monotone function defined in the positive half-line. There is a  positive Radon measure $\mu$ on the closed interval $[0,1]$ such that 
		\begin{equation}\label{lowner-theory-weak}
			f(t) = \int_{[0,1]} \frac{t}{\lambda + (1-\lambda)t}  \,  {\rm d} \mu(\lambda), \quad x > 0.
		\end{equation}
		Conversely, any function given in this form is operator monotone. The measure $\mu$ is a probability measure iff $f(1) =1$.
	\end{theorem}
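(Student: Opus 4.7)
The plan is to derive the integral representation by a Choquet-theoretic argument on the convex set of normalized positive operator monotone functions, identifying the kernels $f_\lambda(t)=t/(\lambda+(1-\lambda)t)$ with $\lambda\in[0,1]$ as the extreme points.

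I would first dispatch the converse direction, which is routine. Each $f_\lambda$ is itself positive and operator monotone on $\mathbb{R}^+$: the endpoints give $f_0\equiv 1$ and $f_1(t)=t$, while for $\lambda\in(0,1)$ a direct algebraic rearrangement exhibits $f_\lambda$ as a positive affine function of $t\mapsto -(c+t)^{-1}$ with $c=\lambda/(1-\lambda)>0$, and the latter map is operator monotone on $\mathbb{R}^+$. Since operator monotonicity is preserved under positive combinations and SOT-monotone limits, integrating against a positive Radon measure on $[0,1]$ yields an operator monotone function, and $f(1)=\mu([0,1])$ gives the probability-measure clause.

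For the forward direction, rescale so that $f(1)=1$ and introduce
\[K=\{g:\mathbb{R}^+\to\mathbb{R}^+\mid g\text{ is operator monotone and }g(1)=1\}.\]
I view $K$ as a subset of the Fr\'echet space $C(\mathbb{R}^+,\mathbb{R})$ under the topology of uniform convergence on compact subintervals. My first concrete step is to verify that $K$ is convex, metrizable, and compact. Convexity is immediate, and by Theorem \ref{operatpr-monotone-implies-operator-concave} each $g\in K$ is operator concave, hence continuous and locally Lipschitz; monotonicity combined with concavity and the normalization $g(1)=1$ supplies the uniform pointwise bound $g(t)\le\max(1,t)$ together with uniform local Lipschitz constants on each compact subinterval of $(0,\infty)$, so Arzel\`a--Ascoli and a diagonal argument give compactness. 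Choquet's theorem (Theorem \ref{choquet-theorem}) then yields a probability measure $\nu$ on $K$, supported on $\mathrm{Ext}(K)$, that represents $f$; testing against the continuous linear evaluation functionals $\rho_t(g)=g(t)$ gives $f(t)=\int_K g(t)\,\mathrm{d}\nu(g)$ for every $t>0$.

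The main obstacle is identifying $\mathrm{Ext}(K)=\{f_\lambda:\lambda\in[0,1]\}$. My strategy is to exploit the divided-difference transformation $L_{t_1}$ of (\ref{lowner-transformation-to-divided-difference}): for a smooth $g\in K$, operator concavity of $g$ combined with Theorem \ref{concavity-implies-divided-difference-is-operator-monotone} applied to $-g$ shows that $-L_{t_1}g$ is operator monotone for every $t_1>0$, and Theorem \ref{first-divided-difference-positive-implies-matrix-monotone} then translates this into positive-semidefiniteness constraints on the associated first divided-difference matrices. Iterating $L$ at two distinct base points and studying the rigidity these constraints impose on the extreme points, one extracts a first-order functional identity $g(t)\bigl(\lambda+(1-\lambda)t\bigr)=t$ whose unique solution is $g=f_\lambda$; any $g\in K$ not of this form admits a nontrivial convex decomposition in $K$ and is therefore not extreme. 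The smoothness hypothesis is removed by mollifying $f$ on each compact subinterval $[a,b]\subset(0,\infty)$, producing smooth operator monotone approximants whose representing measures are supported on the compact set $[0,1]$ and hence weak-$*$ tight; any subsequential weak-$*$ limit provides the representation for the original $f$. Finally, the continuous bijection $\lambda\mapsto f_\lambda$ from $[0,1]$ onto $\mathrm{Ext}(K)$ pushes $\nu$ forward to the Radon measure $\mu$ on $[0,1]$ asserted by the theorem, and $\mu([0,1])=f(1)$ recovers the probability-measure clause.
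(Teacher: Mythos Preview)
Your overall architecture matches the paper's: normalize to $f(1)=1$, show $P_1$ is a metrizable compact convex set, identify its extreme points as the kernels $f_\lambda$, and apply Choquet's theorem. The converse direction and the probability-measure clause are handled correctly, and your choice of the compact-open topology on $C(\mathbb{R}^+)$ in place of the paper's pointwise topology is harmless, since the two coincide on $P_1$ by the equicontinuity you note.

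The genuine gap is your identification of the extreme points. The sentence ``Iterating $L$ at two distinct base points and studying the rigidity these constraints impose \ldots\ one extracts a first-order functional identity $g(t)(\lambda+(1-\lambda)t)=t$'' is not a proof: you never say what the iteration is, nor how it produces a nontrivial convex decomposition of a non-$f_\lambda$ element of $P_1$. Positive-semidefiniteness of divided-difference matrices by itself does not force the functional equation; one needs an explicit splitting $f=\alpha f_1+(1-\alpha)f_2$ with $f_1,f_2\in P_1$. The paper supplies exactly this, and the construction is not obvious from your description: with $\lambda=f'(1)\in(0,1)$ and the transpose $g(t)=t\,f(t^{-1})$, it sets
\[
f_1(t)=\frac{t}{f'(1)}\,L_1f(t),\qquad f_2(t)=\frac{1}{1-f'(1)}\,L_1g(t^{-1}),
\]
verifies algebraically that $f'(1)f_1+(1-f'(1))f_2=f$, and then checks $f_1,f_2\in P_1$. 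That last check is the real work: it needs that $-L_1f$ is operator monotone (the regularization argument, Theorem~\ref{divided-difference-is-operator-monotone-decreasing}) together with the closure of positive operator monotone functions under the transforms $h\mapsto t\,h(t^{-1})$ and $h\mapsto t/h(t)$ (Proposition~\ref{several-operator-monotone-functions}). None of this is visible in ``iterating $L$ at two distinct base points''. Relatedly, your final mollification paragraph is unnecessary once Theorem~\ref{divided-difference-is-operator-monotone-decreasing} is in hand: every $f\in P_1$ is already differentiable and $-L_1f$ is operator monotone, so the extreme-point argument applies directly to all of $P_1$ rather than only to smooth approximants.
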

	Before delving into the various technicalities of the proof, we first give a general outline. Let $P$ denote the set of positive operator monotone functions and let $P_1 = \lbrace f \in P : f(1) =1 \rbrace$.
	
	\begin{enumerate}[label=\textnormal{(\Roman*)}]
		\item We use regularisation to show that  $-L_{t_1}f$ in (\ref{lowner-transformation-to-divided-difference}) is operator monotone for $f \in P$ and  $t_1 > 0$.
		\item We show that  $P_1$ is a metrizable convex compact set under a suitable topology.
		\item We use (I)  to show that the extreme points of $P_1$ are of the form $t \rightarrow \frac{t}{\lambda + (1- \lambda)t}$ on $\mathbb{R}^{+}$ for $\lambda \in [0, 1]$.
		\item We apply Choquet theory by using (II) and (III) to obtain the result.
	\end{enumerate}

	\subsection{Regularisation}
	\index{Regularisation} Let $f: \mathbb{R}^{+} \rightarrow \mathbb{R}^{+}$ be locally integrable. Therefore, we can consider its convolution with a function $\phi: \mathbb{R} \rightarrow \mathbb{R}$ which is infinitely differentiable and has the following additional properties \cite[Page 11]{donoghue}.
	\begin{enumerate}[label=\textnormal{(\Roman*)}]
		\item $\phi(x) \geq 0$ and $\phi(x) = \phi(-x)$ for $x \in \mathbb{R}^{+}$.
		\item  $\phi(x) = 0$ for $x \notin [-1, 1]$.
		\item $\int_{-1}^{1} \phi(x) \,  d x = 1$.
	\end{enumerate}
	
	Let $(a, b) \subseteq \mathbb{R}^{+}$ and $\epsilon > 0$ such that $2^{-1}(b-a) > \epsilon$. We introduce the function $\phi_{\epsilon}(x) = \frac{1}{\epsilon} \phi(\frac{x}{\epsilon})$ and we denote the convolution $f \ast \phi_{\epsilon}$ by $f_{\epsilon}$. Therefore, 
	\begin{equation}\label{regularisation-equation}
		(f \ast \phi_{\epsilon})(x) = \int^{b}_{a} f(y) \phi_{\epsilon}(x-y) \, dy  = \frac{1}{\epsilon} \int^{b}_{a} f(y)  \phi \bigg(  \frac{x-y}{\epsilon} \bigg) dy
	\end{equation}
	whenever $x \in (a + \epsilon, b - \epsilon)$. Furthermore, by substituting for $\epsilon^{-1}(x-y)$ and using the fact that $\phi$ vanishes outside $[-1, 1]$, we obtain 
	\begin{equation*}
		\int^{(x-a)/\epsilon}_{(x-b) /\epsilon} f(x- \epsilon y) \phi(y) dy = \int^{1}_{-1} f(x- \epsilon y) \phi(y) dy.
	\end{equation*}
	
	One of the well-known properties of $f_{\epsilon}$ is that $f_{\epsilon}$ is infinitely differentiable. To demonstrate this, we prove that $f_{\epsilon}$ is differentiable. The existence of higher-order derivatives follows by similar arguments.
	\begin{proposition}\label{regularisation-is-infinitely-differentiable}
		Let $f: \mathbb{R}^{+} \rightarrow \mathbb{R}^{+}$ be locally integrable and $(a, b) \subseteq \mathbb{R}^{+}$. Let $\epsilon > 0$ such that $2^{-1}(b-a) > \epsilon$, and consider 
		the regularisation $f_{\epsilon}$ given by (\ref{regularisation-equation}). Then $f_{\epsilon}$ is differentiable with derivative $f \ast \phi'_{\epsilon}$.
	\end{proposition}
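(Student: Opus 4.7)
The plan is a direct application of differentiation under the integral sign, which reduces to justifying an interchange of limit and integral via the dominated convergence theorem. Since $\phi$ is infinitely differentiable with support in $[-1,1]$, the rescaled function $\phi_\epsilon(x) = \epsilon^{-1}\phi(x/\epsilon)$ is infinitely differentiable with support in $[-\epsilon,\epsilon]$, and its derivative $\phi'_\epsilon$ is continuous with compact support, hence bounded; set $M = \sup_{y \in \mathbb{R}} |\phi'_\epsilon(y)|$.

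First I would fix $x \in (a+\epsilon, b-\epsilon)$ and choose $h$ small enough so that $x+h$ also lies in $(a+\epsilon, b-\epsilon)$. Writing out the difference quotient using (\ref{regularisation-equation}) gives
\[
\frac{f_\epsilon(x+h) - f_\epsilon(x)}{h} = \int_a^b f(y) \cdot \frac{\phi_\epsilon(x+h-y) - \phi_\epsilon(x-y)}{h}\, dy.
\]
By the mean value theorem applied to $\phi_\epsilon$, for each $y \in (a,b)$ there is a point $\xi_{y,h}$ between $x-y$ and $x+h-y$ with
\[
\frac{\phi_\epsilon(x+h-y) - \phi_\epsilon(x-y)}{h} = \phi'_\epsilon(\xi_{y,h}),
\]
and this quantity is bounded in absolute value by $M$. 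Hence the integrand is dominated, uniformly in $h$, by $M |f(y)|$, which is integrable on $[a,b]$ since $f$ is locally integrable and $[a,b]$ is compact.

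Next I would invoke the dominated convergence theorem. The integrand converges pointwise to $f(y)\phi'_\epsilon(x-y)$ as $h \to 0$ (by definition of the derivative of $\phi_\epsilon$ at $x-y$), so
\[
f_\epsilon'(x) = \int_a^b f(y)\, \phi'_\epsilon(x-y)\, dy = (f \ast \phi'_\epsilon)(x),
\]
which is the claimed formula. I do not expect any real obstacle here: this is a template Leibniz/DCT computation, and the only subtlety is ensuring that $x \pm h - y$ remains in a range where the support constraint on $\phi_\epsilon$ is compatible with the integration over $(a,b)$, which is precisely what the hypothesis $2^{-1}(b-a) > \epsilon$ guarantees. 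The higher-order derivatives claim asserted in the surrounding text then follows by iterating the same argument, with $\phi_\epsilon$ replaced by $\phi^{(k)}_\epsilon$; each derivative is again smooth with compact support in $[-\epsilon,\epsilon]$, so the same dominating function $M_k |f(y)|$ with $M_k = \sup|\phi^{(k+1)}_\epsilon|$ works.
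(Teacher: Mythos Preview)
Your argument is correct, but it differs from the paper's in the mechanism used to pass the limit under the integral. The paper does not invoke the dominated convergence theorem; instead it writes $\phi_\epsilon(x+h-y)$ via a second-order Taylor expansion about $x-y$, so that the difference between the difference quotient and $f\ast\phi'_\epsilon(x)$ is bounded by $\tfrac{1}{2}M_1 M_2 (b-a)\,h$, where $M_1=\sup_{(a,b)}|f|$ and $M_2=\sup|\phi''_\epsilon|$ on the relevant interval. This gives an explicit $O(h)$ rate of convergence, at the cost of tacitly using that $f$ is bounded on $(a,b)$ rather than merely locally integrable as stated. Your MVT/DCT approach uses only the stated local-integrability hypothesis (the dominating function $M|f|$ is integrable on $[a,b]$ without any boundedness assumption), but yields no quantitative rate. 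Both routes are standard and either is entirely adequate here, since in the intended application $f$ is continuous.
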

	\begin{proof}
		Let $h > 0$, $x \in (a+\epsilon, b - \epsilon)$ and $\displaystyle M_1 = \sup_{x \in (a, b)}  \myabs{f(x)}$. Then 
		\begin{align*}
			\myabs{ \mathcal{E}_h(x) } &= \bigg\vert \frac{f_{\epsilon}(x+h) - f_{\epsilon}(x)  }{h} - f \ast \phi'_{\epsilon}(x) \bigg\vert \\
			&= \bigg\vert \int^{b}_{a}  \frac{ f(y) \phi_{\epsilon}(x+h -y) - f(y)\phi_{\epsilon}(x -y)}{h} -f(y)\phi'_{\epsilon}(x-y) dy \bigg\vert  \\ 
			&\leq M_1 \int^{b}_{a} \bigg\vert \frac{\phi_{\epsilon}(x+h -y)  -\phi_{\epsilon}(x-y)}{h} - \phi'_{\epsilon}(x-y) \bigg\vert dy.  
		\end{align*}
		Since we will be using Taylor's approximation, let $c_y \in (x-y, x +h -y)$ for $y \in (a, b)$. Let $\displaystyle M_2 = \sup \lbrace \myabs{\phi_{\epsilon}''(x)}: a + \epsilon - b \leq x \leq b-\epsilon + h -a\rbrace$. Then
		\begin{align*}
			\myabs{\mathcal{E}_h(x)} &\leq M_1 \int^{b}_{a} \bigg\vert  \frac{\phi_{\epsilon}'(x-y)h + 2^{-1}\phi_{\epsilon}''(c_y)h^{2}}{h} - \phi'_{\epsilon}(x-y) \bigg\vert  dy \\
			&\leq M_1 \int^{b}_{a} \bigg\vert  \frac{\phi_{\epsilon}''(c_y)h}{2} \bigg\vert dy \leq \frac{M_1 M_2 (b-a)h}{2}.
		\end{align*}
		Therefore, as $h \rightarrow 0$, the result is obtained.
	\end{proof}
	
	We recall the transformation (\ref{lowner-transformation-to-divided-difference}) which leads us to the following important theorem \cite[Theorem 3.7]{hansen-fasttrack}.
	
	\begin{theorem}\label{divided-difference-is-operator-monotone-decreasing}
		Let $f: \mathbb{R}^{+} \rightarrow \mathbb{R}^{+}$ be an operator monotone function. Then $f$ is differentiable and  $-L_{t_1}f$ is operator monotone for each $t_1 > 0$.
	\end{theorem}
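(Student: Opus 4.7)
The strategy is to reduce to the twice continuously differentiable case covered by Theorem~\ref{concavity-implies-divided-difference-is-operator-monotone}, via regularisation, and then pass to the limit $\epsilon \to 0^{+}$. For a fixed $t_1 > 0$ I would choose $\epsilon \in (0, t_1/2)$ and form the mollification $f_{\epsilon} = f \ast \phi_{\epsilon}$, which is of class $C^{\infty}$ on $(\epsilon, \infty)$ by iterating Proposition~\ref{regularisation-is-infinitely-differentiable}. The first task is to check that $f_{\epsilon}$ inherits operator monotonicity on $(\epsilon, \infty)$: using the representation
\[
f_{\epsilon}(x) = \int_{-1}^{1} f(x - \epsilon y) \phi(y)\, dy,
\]
one verifies that if $A \leq B$ have spectra in $(\epsilon, \infty)$ then $A - \epsilon y \mathbb{I} \leq B - \epsilon y \mathbb{I}$ both have spectra in the positive half-line for every $y \in [-1, 1]$, so the inequality $f(A - \epsilon y \mathbb{I}) \leq f(B - \epsilon y \mathbb{I})$ furnished by the operator monotonicity of $f$ is preserved upon integration against the non-negative weight $\phi$.

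Once $f_{\epsilon}$ is known to be operator monotone, Theorem~\ref{operatpr-monotone-implies-operator-concave} identifies it as operator concave, so $-f_{\epsilon}$ is smooth and operator convex; Theorem~\ref{concavity-implies-divided-difference-is-operator-monotone} then yields that $-L_{t_1} f_{\epsilon}$ is operator monotone on $(\epsilon, \infty)$. The continuity of $f$ on $\mathbb{R}^{+}$ from Theorem~\ref{operator-monotone-implies-operator-concave} delivers $f_{\epsilon} \to f$ uniformly on each compact subset of $\mathbb{R}^{+}$, whence $L_{t_1} f_{\epsilon}(t) \to L_{t_1} f(t)$ pointwise for every $t \neq t_1$. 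I would then pass the operator inequalities $-L_{t_1} f_{\epsilon}(A) \leq -L_{t_1} f_{\epsilon}(B)$ to the limit, using the s.o.t.\ continuity of operator monotone functions on bounded sets of self-adjoints (already invoked in the proof of Theorem~\ref{operator-monotone-characterized-by-finite-orders}), to conclude that $-L_{t_1} f$ is operator monotone on $\mathbb{R}^{+} \setminus \{t_1\}$.

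The main obstacle will be handling the point $t_1$ itself: the value $L_{t_1} f(t_1) = f'(t_1)$ in~(\ref{divided-difference-function-definition}) presupposes a differentiability that has not yet been established. My plan here is to exploit that each $-L_{t_1} f_{\epsilon}$ is continuous and monotone on a neighbourhood of $t_1$ with $-L_{t_1}f_{\epsilon}(t_1) = -f_{\epsilon}'(t_1)$, so the family is locally uniformly bounded and admits, by Helly's selection theorem, a subsequential pointwise limit on all of $\mathbb{R}^{+}$ that coincides with $-L_{t_1} f$ off $t_1$. Because any such pointwise limit of operator monotone functions is itself operator monotone and hence continuous on any interval on which it is defined, the one-sided derivatives $f'_{-}(t_1)$ and $f'_{+}(t_1)$, which exist by the scalar concavity of $f$, are sandwiched between the values of this limit on either side of $t_1$ and must therefore coincide. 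This establishes the differentiability of $f$ at $t_1$ and, via the same limiting argument, the operator monotonicity of $-L_{t_1} f$ on all of $\mathbb{R}^{+}$.
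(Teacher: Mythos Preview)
Your approach is essentially the same as the paper's: regularise, apply Theorem~\ref{concavity-implies-divided-difference-is-operator-monotone} to the smooth approximants (via operator concavity of the mollified functions), pass to the pointwise limit off $t_1$, and handle the value at $t_1$ by extracting a convergent subsequence and using that the limiting operator monotone function must be continuous. The only cosmetic differences are that the paper phrases the limit passage through the Borel functional calculus together with dominated convergence rather than ``s.o.t.\ continuity'', and it extracts the subsequence at $t_1$ directly from the boundedness of $f_\epsilon'(t_1)=\int_{-1}^{1} f(t_1-\epsilon y)\phi'(y)\,dy$ rather than invoking Helly.
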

	\begin{proof}
		Let  $(a, b) \subsetneq \mathbb{R}^{+}$. By Theorem \ref{operator-monotone-implies-operator-concave}, if $f$ is operator monotone, then $f$ is operator concave. Let $N \in \mathbb{N}$ be such that $2^{-1}(b-a) > N^{-1}$. Furthermore if $t_1 \in (a,b)$, then $N$ must also satisfy the property that  $t_1 \in (a + N^{-1}, b - N^{-1})$. By regularisation, we obtain a sequence $\lbrace f_n \rbrace_{n \geq N}$ of infinitely differentiable operator concave functions. We now show that the sequence $\lbrace L_{t_1}f_n \rbrace_{n \geq N}$ converges pointwise for any $t \in (a, b)$. Since $f$ is continuous,
		\[  \lim_{n \rightarrow \infty} L_{t_1}f_n(t) = \lim_{n \rightarrow \infty} \frac{f_n(t) - f_n(t_1)}{t-t_1} =  \frac{f(t) - f(t_1)}{t-t_1}, \quad t \in (a, b) \setminus \lbrace t_1 \rbrace. \]
		If $t_1 \in (a, b)$, by Proposition \ref{regularisation-is-infinitely-differentiable},
		\[ L_{t_1}f_n(t_1) = f'_n(t_1) = \int_{-1}^{1} f(t_1 - n^{-1}y) \phi'(y) dy. \]
		Since $f$ is monotone, the sequence $\lbrace L_{t_1}f_n(t_1) \rbrace_{n \geq N}$ is bounded for sufficiently large $n$. Therefore, a subsequence of $\lbrace L_{t_1}f_n \rbrace$ converges pointwise to some function $F$ for all $t \in (a, b)$.
		\par
		Let $M > 0$. For sufficiently large $n$, we have that $\myabs{L_{t_1}f_n(t)} \leq M$  for all $t \in (a, b)$. By Theorem \ref{concavity-implies-divided-difference-is-operator-monotone}, $-L_{t_1}f_n$ is operator monotone for all $n \in \mathbb{N}$, so by using Lebesgue's dominated convergence theorem and the Borel function calculus, we conclude that $-F$ is operator monotone on $(a, b)$. Continuity of $F$ implies that 
		\[  F(t_1) = \lim_{t \rightarrow t_1} \frac{f(t) - f(t_1)}{t-t_1} = f'(t_1).  \]
		Therefore, $F=L_{t_1}f$ and $-L_{t_1}f$ is operator monotone for any proper open subinterval of $\mathbb{R}^{+}$, so $-L_{t_1}f$ is operator monotone on $ \mathbb{R}^{+}$.
	\end{proof}

	Looking at the proof of the previous theorem, we can extract the following Corollary.
	\begin{corollary}\label{regularisation-gives-infinitely-differentiable-functions-converging-pointwise}
		Let $f: \mathbb{R}^{+} \rightarrow \mathbb{R}^{+}$ be an operator monotone function and $(a, b ) \subsetneq \mathbb{R}^{+}$. Then $f$ is differentiable and there is a sequence $\lbrace f_n \rbrace$ of infinitely differentiable operator monotone functions such that $\lim f_n(t) = f(t)$ for all $t \in (a, b)$. Furthermore, $\lim f'_n(t) = f'(t)$ for a finite number of points in $(a,b)$.
	\end{corollary}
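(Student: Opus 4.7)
The plan is to harvest what has already been done in the proof of Theorem \ref{divided-difference-is-operator-monotone-decreasing}, so most of the work reduces to extracting statements rather than introducing new arguments. Differentiability of $f$ on $\mathbb{R}^{+}$ is already a conclusion of that theorem, so I will cite it directly. For the approximating sequence, I take $f_n := f \ast \phi_{1/n}$ for $n$ large enough that $2^{-1}(b-a) > n^{-1}$, which is exactly the regularisation scheme used previously; Proposition \ref{regularisation-is-infinitely-differentiable} (applied iteratively) gives that each $f_n$ is infinitely differentiable on the shrunk interval.

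The first substantive step will be to verify that every $f_n$ is operator monotone, which was stated for operator concavity in the earlier proof and must be adapted. Writing
\[ f_n(x) = \int_{-1/n}^{1/n} f(x-y)\,\phi_{1/n}(y)\, dy, \]
I note that each translate $x \mapsto f(x-y)$ is operator monotone on the domain where its argument lies in $\mathbb{R}^{+}$, and $\phi_{1/n} \geq 0$ with unit integral. Thus $f_n$ is a non-negative weighted average of operator monotone functions; approximating the integral by Riemann sums of such functions and passing to the limit using \cite[Proposition 5.3.2]{kadison-vol-1} (s.o.t.\ continuity on bounded sets of self-adjoints) yields operator monotonicity of $f_n$.

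For pointwise convergence $f_n(t)\to f(t)$ on $(a,b)$, I exploit continuity of $f$ (which follows from Theorem \ref{operator-monotone-implies-operator-concave}): given $t\in(a,b)$ and $\varepsilon>0$, uniform continuity of $f$ on a compact neighbourhood of $t$ forces $|f(t-y)-f(t)|<\varepsilon$ whenever $|y|<1/n$ for $n$ large, and the estimate $|f_n(t)-f(t)|\leq \int |f(t-y)-f(t)|\phi_{1/n}(y)\,dy$ closes the argument.

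The last assertion, that $f_n'(t)\to f'(t)$ at finitely many points, is essentially a repackaging of the subsequence argument already carried out inside Theorem \ref{divided-difference-is-operator-monotone-decreasing}: at a single $t_1$, monotonicity of $f$ bounds $\{f_n'(t_1)\}$, and any cluster point is identified as $f'(t_1)$ via the $L_{t_1}$-transformation and continuity. For a prescribed finite set $t_1,\dots,t_m$, I iterate this extraction, passing to nested subsequences (or a diagonal one), so that the resulting subsequence satisfies $f_{n_k}'(t_i)\to f'(t_i)$ simultaneously for $i=1,\dots,m$ while still enjoying pointwise convergence of $f_{n_k}$ to $f$. The main obstacle, as I see it, is the bookkeeping around domains and the precise interpretation of the statement: derivative convergence is only guaranteed after passing to a subsequence depending on the chosen finite set, and the interval $(a,b)$ must be shrunk by $1/n$ at each stage for the regularisation to be well defined.
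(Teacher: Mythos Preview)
Your proposal is correct and takes essentially the same approach as the paper: both simply extract the relevant facts from the proof of Theorem~\ref{divided-difference-is-operator-monotone-decreasing}, citing continuity of $f$ for pointwise convergence of the regularised sequence and the bounded-subsequence argument for convergence of the derivatives. You supply more detail than the paper does---in particular the explicit verification that each $f_n$ is operator monotone (the paper's previous proof only recorded operator \emph{concavity}) and the nested-subsequence extraction for finitely many points---but the underlying strategy is identical.
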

	\begin{proof}
		In the previous proof, continuity of $f$ guaranteed that the sequence $\lbrace f_n \rbrace$ obtained by regularisation converges pointwise to $f$ on $(a,b)$. Furthermore, it was also shown that a subsequence of $\lbrace f'_n \rbrace$ converges to $f'$ for a finite number of points in $(a, b)$.
	\end{proof}

	\subsection{Metrizability, Compactness, and Convexity of Positive Operator Monotone functions}
	We recall that $P_1$ consists of operator monotone functions $f: \mathbb{R}^{+} \rightarrow \mathbb{R}^{+}$ such that $f(1) = 1$. Naturally, $P_1$ is a subset of the vector space $E$ of functions $\mathbb{R}^{\mathbb{R}}$. Let $x \in \mathbb{R}$ and consider the pointwise evaluation $\tau_{x}: f \rightarrow f(x)$ for any $f \in E$. Then it is clear that the collection $\lbrace \myabs{\tau_{x}} \rbrace_{x \in \mathbb{R}}$ is a family of separating semi-norms on $E$, so by \cite[Theorem 1.2.6]{kadison-vol-1}, we obtain a locally convex topology on $E$ with base neighbourhoods of the form 
	\begin{equation}\label{locally-convex-neighbourhoods}
		V(f_0: \myabs{\tau_{x_1}} \ldots \myabs{\tau_{x_n}}; \epsilon) = \lbrace f \in E : \myabs{\tau_{x_i}(f) - \tau_{x_i}(f_0)} < \epsilon \, (i = 1, \ldots, n ) \rbrace 
	\end{equation}
	for $f_0 \in E$, $x_1 \ldots x_n \in \mathbb{R}$, $\epsilon > 0$, and $n \in \mathbb{N}$.
	
	It is easy to see that $P_1$ is convex since positive linear combinations of operator monotone functions are again operator monotone. Let us now prove some preliminary properties which will be required to prove that $P_1$ is metrizable and compact \cite[Proposition 28.7]{barry-simon}.
	
	\begin{proposition}\label{barry-simon-operator-monotone-function-properties}
		Let $f: \mathbb{R}^{+} \rightarrow \mathbb{R}^{+}$ be an operator monotone function, and $s > t > 0$, then
		\begin{enumerate}[label=\textnormal{(\Roman*)}]
			\item $0 < f(t) \leq f(s)\leq  \frac{f(t)s}{t}.$
			\item If $f(1)=1$, then $f(t) \leq t + 1.$
			\item If $f(1)=1$, then $0 \leq f(s) - f(t) \leq (1+t^{-1})( s-t) $.
		\end{enumerate}
	\end{proposition}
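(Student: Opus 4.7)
The plan is to deduce (I) from operator concavity of $f$, and then to obtain (II) and (III) as purely algebraic corollaries of (I) together with the normalization $f(1)=1$.

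For (I), the strict positivity $0<f(t)$ is built into the hypothesis that $f$ takes values in $\mathbb{R}^{+}$, and the middle inequality $f(t)\le f(s)$ is scalar monotonicity, obtained by applying operator monotonicity to $1\times 1$ Hermitian matrices. The substantive content is $f(s)\le f(t)\,s/t$, i.e.\ that $x\mapsto f(x)/x$ is non-increasing on $(0,\infty)$. To establish this I would invoke Theorem \ref{operatpr-monotone-implies-operator-concave}: since $f$ is operator monotone on $\mathbb{R}^{+}$, it is operator concave, and in particular scalar concave. Being monotone increasing and bounded below by $0$, the limit $L := f(0^{+}) = \inf_{x>0} f(x) \ge 0$ exists, so $f$ extends concavely to $[0,\infty)$ by $f(0):=L$. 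Writing $t = \tfrac{t}{s}\cdot s + \bigl(1-\tfrac{t}{s}\bigr)\cdot 0$ and applying concavity yields $f(t) \ge \tfrac{t}{s} f(s) + \bigl(1-\tfrac{t}{s}\bigr) L \ge \tfrac{t}{s} f(s)$, which rearranges to $f(s) \le f(t)\, s/t$.

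For (II), assume $f(1)=1$ and split into cases. If $0<t\le 1$, scalar monotonicity gives $f(t)\le f(1)=1\le t+1$. If $t>1$, applying the upper bound of (I) to the pair $1<t$ gives $f(t)\le f(1)\cdot t = t \le t+1$.

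For (III), the left inequality $0\le f(s)-f(t)$ is again scalar monotonicity. For the upper bound, (I) gives $f(s)\le f(t)\,s/t$, so $f(s)-f(t)\le f(t)(s-t)/t$. Applying (II) bounds $f(t)/t \le (t+1)/t = 1+t^{-1}$, whence $f(s)-f(t)\le (1+t^{-1})(s-t)$. The only mildly delicate point in the whole proposition is justifying the existence and non-negativity of $f(0^{+})$ used in (I) (which rests on concavity plus monotonicity plus positivity); once that is in hand, (II) and (III) are purely algebraic consequences.
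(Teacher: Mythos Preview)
Your proof is correct and follows essentially the same approach as the paper: both deduce scalar concavity from operator monotonicity, use the existence of $f(0^{+})\ge 0$ to get $f(s)\le f(t)s/t$ (the paper phrases this via the secant line through $(0,f(0^{+}))$ and $(t,f(t))$, you via the convex combination $t=\tfrac{t}{s}s+(1-\tfrac{t}{s})\cdot 0$, which are equivalent), and then derive (II) and (III) by the same case split and algebraic manipulations.
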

	\begin{proof}
		\emph{\textnormal{(I)}}.  Since $f$ is bounded below by $0$ and is scalar monotone, then $\displaystyle \lim_{x \rightarrow 0+}f(x)=x_0$ where $x_0 \in [0, \infty)$. Since $f$ is operator monotone, then $f$ is  a scalar concave function by Theorem \ref{operator-monotone-implies-operator-concave}, so $f(s)$ lies below the straight line from $(0, x_0)$ through $(t, f(t))$ which implies
		\begin{equation*}
			\begin{split}
				f(s) &\leq \frac{(f(t)- x_0)s}{t} + x_0 \\
				&= \frac{(f(t) - x_0)(s-t)}{t} + f(t) \\
				&\leq \frac{f(t)(s-t)}{t} + f(t) = \frac{f(t)s}{t}.
			\end{split}
		\end{equation*}
		\par 
		\emph{\textnormal{(II)}}. If $t > 1$, then $f(t) \leq t$ by (I). On the other hand, if $t \leq 1$, then $f(t) \leq f(1) = 1$ by monotonicity. Combining the aforementioned, allows us to conclude that $f(t) \leq t +1$ for any $t > 0$.
		\par 
		\emph{\textnormal{(III)}}. By (I) and (II), 
		\begin{equation*}
			\begin{split}
				0 \leq f(s) - f(t) &\leq \frac{f(t)(s-t)}{t} \leq (1 + t^{-1})(s-t).
			\end{split}
		\end{equation*}
	\end{proof}
	
	\begin{theorem}
		Let $E$ denote the vector space of functions $\mathbb{R}^{\mathbb{R}}$. The subset $P_1$ of $E$ of operator monotone functions $f: \mathbb{R}^{+} \rightarrow \mathbb{R}^{+}$ such that $f(1) = 1$ is metrizable.
	\end{theorem}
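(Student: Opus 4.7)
The plan is to exhibit an explicit metric on $P_1$ that generates the subspace topology inherited from $E$. The topology on $E$ has the pointwise-evaluation seminorms (\ref{locally-convex-neighbourhoods}) as a defining family, so on the subset $P_1$ convergence amounts to pointwise convergence at every positive real. The strategy will be to replace the uncountable family $\lbrace \myabs{\tau_x} \rbrace_{x > 0}$ by a \emph{countable} subfamily---pointwise evaluation at the positive rationals---and to show this countable subfamily already recovers the original subspace topology on $P_1$. A locally convex topology generated by a countable separating family of seminorms is metrizable by a standard construction, giving the conclusion.

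Concretely, I would enumerate $\mathbb{Q} \cap (0, \infty) = \lbrace q_n \rbrace_{n \in \mathbb{N}}$ and propose the metric
\[ d(f, g) = \sum_{n=1}^{\infty} 2^{-n} \min\bigl(1,\, \myabs{f(q_n) - g(q_n)}\bigr), \quad f, g \in P_1. \]
Standard verifications show $d$ is a pseudometric; that it is a genuine metric follows because every $f \in P_1$ is scalar concave (Theorem \ref{operator-monotone-implies-operator-concave}) and therefore continuous on $(0, \infty)$, so agreement on the dense set $\mathbb{Q} \cap (0, \infty)$ forces agreement everywhere on $(0, \infty)$. The easier direction of the topological comparison---that pointwise convergence in $E$ forces $d$-convergence---follows by splitting the defining series of $d$ at a large index $N$ and using the tail bound $\sum_{n > N} 2^{-n} < \epsilon/2$, then invoking pointwise convergence at the finitely many remaining rationals $q_1, \ldots, q_N$.

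The main obstacle is the reverse direction: showing that $d$-convergence actually forces pointwise convergence at every (possibly irrational) $x > 0$. This is where Proposition \ref{barry-simon-operator-monotone-function-properties}(III) becomes essential. For $f \in P_1$ and $0 < t < s$ one has $\myabs{f(s) - f(t)} \leq (1 + t^{-1})(s - t)$, which, applied in both orders to $q$ near $x$, yields a modulus of continuity that is \emph{uniform over all of $P_1$} on any compact subinterval of $(0, \infty)$. With this equicontinuity in hand, the triangle inequality
\[ \myabs{f_k(x) - f(x)} \leq \myabs{f_k(x) - f_k(q)} + \myabs{f_k(q) - f(q)} + \myabs{f(q) - f(x)} \]
closes the argument: for a rational $q$ chosen close enough to $x$, the outer terms are controlled uniformly in $k$ by the Lipschitz-type bound from (III), while the middle term tends to $0$ since $d(f_k, f) \to 0$ implies $f_k(q) \to f(q)$. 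This establishes that $d$ generates the subspace topology on $P_1$, completing the proof of metrizability.
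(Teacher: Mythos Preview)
Your argument is correct and reaches the same conclusion, but by a genuinely different route from the paper. The paper proves metrizability indirectly: it shows that the subspace topology on $P_1$ is second countable (by building a countable base out of neighbourhoods $V(c:\myabs{\tau_{p_1}},\ldots,\myabs{\tau_{p_n}};r)\cap P_1$ with rational $p_i$, rational radius $r$, and rational target values $q_i=c(p_i)$) and regular, and then implicitly appeals to Urysohn's metrization theorem. You instead construct the metric $d$ explicitly from evaluations at the positive rationals and verify directly that $d$ induces the subspace topology. Both arguments hinge on exactly the same ingredient---the uniform Lipschitz-type bound of Proposition~\ref{barry-simon-operator-monotone-function-properties}(III), which makes evaluations at irrationals controllable by nearby rational evaluations uniformly over $P_1$---so the conceptual core is shared. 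Your approach has the virtue of producing a concrete metric and avoiding any general metrization theorem; the paper's approach avoids having to check the metric axioms and the two inclusions of topologies by hand. One small point worth tightening in your write-up: you phrase the comparison of topologies via convergent sequences, but the pointwise topology on $\mathbb{R}^{\mathbb{R}}$ is not first countable, so sequential agreement alone does not literally identify the topologies. Your estimates, however, work equally well at the level of basic neighbourhoods (the tail bound gives $V(f:\myabs{\tau_{q_1}},\ldots,\myabs{\tau_{q_N}};\epsilon/2)\cap P_1\subseteq B_d(f,\epsilon)$, and the equicontinuity estimate gives the reverse inclusion for basic subspace-open sets), so the argument goes through once reworded.
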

	\begin{proof}
		Consider the subspace topology on $P_1$ induced by the topology generated by the base of neighbourhoods of the form in  (\ref{locally-convex-neighbourhoods}) on $E$. Let us show that $P_1$ is second countable. Let $\tau_x$ for some $x \in \mathbb{R}^{+}$ denote the map $\tau_{x}: f \rightarrow f(x)$. Let $f \in E$, $\epsilon > 0$ and  $n \in \mathbb{N}$. Suppose that $g \in V(f: \myabs{\tau_{x_1}}, \ldots, \myabs{\tau_{x_n}};  \epsilon) \cap P_1$ for $\lbrace x_1, \ldots, x_n \rbrace \subseteq \mathbb{R}^{+}$. We also consider the following.
		
		\begin{enumerate}[label=\textnormal{(\Roman*)}]
			\item Let $\delta = \epsilon - \max \lbrace \myabs{f(x_i)-g(x_i)}: i \in  \lbrace 1, \ldots, n \rbrace \rbrace$. Clearly, $\delta > 0$.
			\item Let $p_i \in \mathbb{Q}^{+}$ such that $p_i > x_i$ and $(p_i-x_i)(1+x_i^{-1}) < \delta/4 $ for $ i \in \lbrace 1, \ldots, n \rbrace$.
			\item Let $q_i \in \mathbb{Q}^{+}$ such that $\myabs{g(p_i) - q_i} < \delta/4$ for $ i \in \lbrace 1, \ldots, n \rbrace$.
			\item Let $c : \mathbb{R} \rightarrow \mathbb{R}$ such that $c(p_i) = q_i$ for $ i \in \lbrace 1, \ldots, n \rbrace$.
		\end{enumerate}
		\par
		It is clear that   $ g \in V(c:\myabs{\tau_{p_1}}, \ldots, \myabs{\tau_{p_n}};  \delta /4) \cap P_1$. Let $h \in V(c: \myabs{\tau_{p_1}}, \ldots, \myabs{\tau_{p_n}};  \delta /4) \cap P_1$ and $k \in \lbrace 1, \ldots, n \rbrace$. By (III) of Proposition \ref{barry-simon-operator-monotone-function-properties}, $\myabs{h(x_k) - h(p_k)} < \delta/4$ and $\myabs{g(x_k) - g(p_k)} < \delta /4$. Therefore, 
		\begin{align*}
			\myabs{h(x_k)-g(x_k)} &\leq \myabs{h(x_k)-h(p_k)} + \myabs{h(p_k)-q_k} \\ &+\myabs{q_k - g(p_k)} + \myabs{g(p_k)-g(x_k)}   \\
			&< \delta.
		\end{align*}	   	
		By the above, 
		\begin{equation*}
			\begin{split}
				\myabs{h(x_k)  - f(x_k)} &\leq \myabs{h(x_k) - g(x_k)} + \myabs{g(x_k) - f(x_k)}   \\
				&<  \delta + \myabs{g(x_k) - f(x_k)} \\
				&\leq (\epsilon - \myabs{g(x_k) - f(x_k)}) + \myabs{g(x_k) - f(x_k)}  = \epsilon.
			\end{split}
		\end{equation*}
		Since the above holds for any $k \in \lbrace 1, \ldots, n \rbrace$, we have $h \in V(f: \myabs{\tau_{x_1}}, \ldots, \myabs{\tau_{x_n}} \\;  \epsilon) \cap P_1$, so $g \in V(c: \myabs{\tau_{p_1}}, \ldots, \myabs{\tau_{p_n}}; r) \cap P_1 \subseteq V(f: \myabs{\tau_{x_1}}, \ldots, \myabs{\tau_{x_n}};  \epsilon) \cap P_1$ for some $r \in \mathbb{Q}^{+}$ such that $r < \delta/4$. Hence the space is second countable. Let us show $P_1$ is regular. Suppose $C \subset P_1$ is closed. Let $g \in P_1$ such that $g \notin C$. Therefore, there is some $x \in \mathbb{R}^{+}$ such that $\myabs{g(x)-f(x)} \geq \epsilon$ for $f \in C$. Let $X =  \cup_{f \in C} V(f : \myabs{\tau_{x}}; \epsilon/2)$. Then $C \subseteq X$ and $X \cap V(g: \myabs{\tau_x}; \epsilon/2) = \emptyset$, so $P_1$ is regular.
	\end{proof}

	\begin{lemma}
		The set $P_1$ of operator monotone functions $f: \mathbb{R}^{+} \rightarrow \mathbb{R}^{+}$ such that $f(1) = 1$ is compact.
	\end{lemma}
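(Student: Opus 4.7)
My plan is to realize $P_1$ as a closed subset of a compact product space, using Tychonoff's theorem combined with the finite-order characterization of operator monotonicity from Theorem \ref{operator-monotone-characterized-by-finite-orders}.

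The first step is to establish a uniform pointwise bound. By (II) of Proposition \ref{barry-simon-operator-monotone-function-properties}, every $f \in P_1$ satisfies $0 \leq f(t) \leq t+1$ for all $t > 0$. Viewing elements of $P_1$ naturally inside $\mathbb{R}^{\mathbb{R}^{+}}$, this gives
\[
P_1 \;\subseteq\; K \;:=\; \prod_{t \in \mathbb{R}^+} [0, t+1],
\]
which is compact in the product topology by Tychonoff's theorem. The subspace topology inherited from the base in (\ref{locally-convex-neighbourhoods}) coincides with the product topology when restricted to $P_1$, so it suffices to show that $P_1$ is closed in $K$.

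For closedness, the conditions defining $P_1$ split into three pieces. The normalization $\tau_1(f) = 1$ is a single evaluation and hence closed; positivity $f(t) \geq 0$ for all $t > 0$ is an intersection of closed sets, each defined by a single evaluation; and operator monotonicity is the step requiring care. Here I would invoke Theorem \ref{operator-monotone-characterized-by-finite-orders}, which asserts that $f$ is operator monotone iff $f(A) \leq f(B)$ for every $n \in \mathbb{N}$ and every pair of $n \times n$ Hermitian matrices $A \leq B$ with $\textnormal{sp}(A) \cup \textnormal{sp}(B) \subseteq \mathbb{R}^+$. For any such fixed pair, both $f(A) = \sum_i f(\lambda_i)E_i$ and $f(B)$ depend only on the finitely many values of $f$ on the finite set $\textnormal{sp}(A)\cup \textnormal{sp}(B)$ (the spectral projections $E_i$ being independent of $f$). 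Hence the map $f \mapsto f(B) - f(A)$ is continuous in the product topology, and $\{f \in K : f(A) \leq f(B)\}$ is closed. Intersecting over all such pairs and all $n$, combined with the other two closed conditions, realizes $P_1$ as a closed subset of the compact set $K$, so $P_1$ is compact.

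The main obstacle is conceptual rather than computational: operator monotonicity is \emph{a priori} an infinite-dimensional ($C^{*}$-algebraic) constraint, so one might not expect it to be a closed condition in the coarse topology of pointwise convergence. The payoff of Theorem \ref{operator-monotone-characterized-by-finite-orders} is precisely that it replaces this constraint with a family of conditions, each involving only finitely many evaluations, which are automatically closed in the product topology. Once this is recognized the rest of the argument is routine: Tychonoff plus an intersection of closed sets.
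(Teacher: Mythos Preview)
Your argument is correct. Both you and the paper use the bound $0\le f(t)\le t+1$ from Proposition~\ref{barry-simon-operator-monotone-function-properties}(II) to sit $P_1$ inside a compact product, and both then argue that $P_1$ is closed there; the difference lies in how closedness of the operator-monotonicity condition is verified. The paper works sequentially (leaning on the metrizability already established): given $f_n\to f$ pointwise and $A\le B$, it invokes the Borel functional calculus together with dominated convergence to pass from $f_n(A)\le f_n(B)$ to $f(A)\le f(B)$. You instead bypass sequences and DCT entirely by using Theorem~\ref{operator-monotone-characterized-by-finite-orders} to rewrite operator monotonicity as an intersection of conditions each depending on only finitely many point evaluations, hence manifestly closed in the product topology. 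Your route is more elementary and does not need the prior metrizability result or any measure-theoretic input; the paper's route has the minor advantage of handling the $C^*$-algebraic inequality $f(A)\le f(B)$ in one stroke without first reducing to matrices. Either way the substance is the same: uniform pointwise bounds give an ambient compact set, and operator monotonicity survives pointwise limits.
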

	\begin{proof}
		Since $P_1$ is metrizable, consider a sequence $\lbrace f_n \rbrace$ in $P_1$ such that $f_n \rightarrow f$ converges pointwise. If $f_n \rightarrow f$ pointwise, it is clear that $f(1)=1$. Since $f(t) \leq t + 1$ by (II) of Proposition \ref{barry-simon-operator-monotone-function-properties}, then by the Borel function calculus and Lebesgue's Dominated Convergence theorem, we can conclude that $f$ is operator monotone. Therefore, $P_1$ is compact. 
	\end{proof}

	\begin{corollary}\label{operator-monotone-functions-can-use-choquet-theory}
		The set $P_1$ of operator monotone functions $f: \mathbb{R}^{+} \rightarrow \mathbb{R}^{+}$ such that $f(1) = 1$ is metrizable, compact and convex in the subspace topology on $P_1$ induced by the topology of $\mathbb{R}^{\mathbb{R}}$. The base of neighbourhoods for the topology on $\mathbb{R}^{\mathbb{R}}$ is given by sets of the form
		\begin{equation*}
			V(f_0: \myabs{\tau_{x_1}} \ldots \myabs{\tau_{x_n}}; \epsilon) = \lbrace f \in \mathbb{R}^{\mathbb{R}} : \myabs{\tau_{x_i}(f) - \tau_{x_i}(f_0)} < \epsilon \, (i = 1, \ldots, n ) \rbrace 
		\end{equation*}
		for $f_0 \in \mathbb{R}^{\mathbb{R}}$, $x_1 \ldots x_n \in \mathbb{R}$, $\epsilon > 0$, and $n \in \mathbb{N}$.
	\end{corollary}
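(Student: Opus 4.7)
The plan is to package the corollary as a direct consolidation of the preceding theorem and lemma, together with a short direct verification of convexity and a remark that the description of the subbasic neighbourhoods is just the definition of the product topology on $\mathbb{R}^{\mathbb{R}}$ restricted to $P_1$. Nothing essentially new is required; the work has already been done.

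First I would invoke the theorem preceding the corollary, which gives metrizability of $P_1$ by showing that the subspace topology inherited from the locally convex topology on $E = \mathbb{R}^{\mathbb{R}}$ (with base neighbourhoods of the form in (\ref{locally-convex-neighbourhoods})) is second countable and regular, and then apply the Urysohn metrization theorem (already implicit in the theorem's conclusion). Next I would cite the lemma immediately above the corollary, which shows that $P_1$ is compact: sequential compactness of $P_1$ (via metrizability, the bound $f(t) \leq t+1$ from (II) of Proposition \ref{barry-simon-operator-monotone-function-properties}, and a dominated convergence / Borel functional calculus argument to preserve operator monotonicity under pointwise limits) together with metrizability yields compactness.

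For convexity, which was asserted in passing earlier in the subsection but not formally verified, I would give a one-line check: take $f, g \in P_1$ and $\lambda \in [0,1]$, set $h = \lambda f + (1-\lambda) g$, and observe that if $A \leq B$ with $\mathrm{sp}(A) \cup \mathrm{sp}(B) \subseteq (0, \infty)$, then by operator monotonicity of $f$ and $g$ and linearity of the Borel functional calculus,
\[
h(A) = \lambda f(A) + (1-\lambda) g(A) \leq \lambda f(B) + (1-\lambda) g(B) = h(B),
\]
while $h(1) = \lambda f(1) + (1-\lambda) g(1) = 1$, so $h \in P_1$. Finally, the description of the base of neighbourhoods for $\mathbb{R}^{\mathbb{R}}$ is precisely (\ref{locally-convex-neighbourhoods}) specialised by taking $f_0 \in \mathbb{R}^{\mathbb{R}}$ rather than $f_0 \in E$; since the topology inducing the metric on $P_1$ is the subspace topology, no further verification is needed.

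There is no genuine obstacle here: all three properties have already been established (metrizability and compactness formally, convexity informally). The only minor care point is to make sure the verification of convexity uses operator monotonicity in the $C^*$-algebra sense (so that it survives on \mybhplusplus) rather than just scalar monotonicity, but this is immediate from the definition of operator monotone functions together with linearity of the functional calculus.
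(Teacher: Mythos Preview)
Your proposal is correct and matches the paper's treatment: the corollary is stated without proof in the paper precisely because it is the consolidation of the preceding theorem (metrizability), the preceding lemma (compactness), and the earlier one-line observation that positive combinations of operator monotone functions are operator monotone. Your explicit verification of convexity and the remark on the neighbourhood base are exactly the informal content the paper leaves implicit.
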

	
	\subsection{Proof}
	Let us first start by characterizing the extreme points of $P_1$. This requires the following results on operator monotone functions \cite[Theorem 10.10]{barry-simon} and \cite[Corollary 4.2]{hansen-fasttrack}.
	\begin{proposition}\label{several-operator-monotone-functions}
		Let $f: \mathbb{R}^{+} \rightarrow \mathbb{R}^{+}$ be an operator monotone function. Then
		\begin{enumerate}[label=\textnormal{(\Roman*)}]
			\item The function $t \rightarrow -t^{-1}f(t)$ is operator monotone on $\mathbb{R}^{+}$.
			\item The function $t \rightarrow t f(t)^{-1}$ is operator monotone on $\mathbb{R}^{+}$.
			\item The function $t \rightarrow t f(t^{-1})$ is operator monotone on $\mathbb{R}^{+}$.
		\end{enumerate}
	\end{proposition}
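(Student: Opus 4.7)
The plan is to establish (I) directly from Theorem~\ref{divided-difference-is-operator-monotone-decreasing} by a limit argument, and then to deduce (II) and (III) from (I) via elementary closure properties of the operator monotone class, namely that reciprocals of positive antimonotone functions and compositions of two antimonotone functions are again operator monotone.

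For (I), I would start from the fact that, by Theorem~\ref{divided-difference-is-operator-monotone-decreasing}, $-L_{t_1}f$ is operator monotone for every $t_1 > 0$, and let $t_1 \to 0^+$. For fixed $t > 0$,
\[
-L_{t_1}f(t) \;=\; -\frac{f(t)-f(t_1)}{t-t_1} \;\longrightarrow\; -\frac{f(t)-f(0+)}{t},
\]
where $f(0+):=\lim_{s\to 0^+}f(s)\in[0,\infty)$ exists by monotonicity of $f$. On any compact subinterval of $(0,\infty)$ the family $\{-L_{t_1}f\}_{t_1\in(0,\delta)}$ is uniformly bounded (again by monotonicity of $f$), so bounded convergence within the Borel functional calculus, applied as in the proof of Theorem~\ref{divided-difference-is-operator-monotone-decreasing}, transfers pointwise scalar convergence to SOT convergence on each bounded set of self-adjoint operators with spectrum in such a subinterval. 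Passing the operator inequality $-L_{t_1}f(A)\le -L_{t_1}f(B)$ to the limit shows that $t\mapsto -(f(t)-f(0+))/t$ is operator monotone. Decomposing
\[
-t^{-1}f(t) \;=\; \Big[{-\tfrac{f(t)-f(0+)}{t}}\Big] \,+\, f(0+)\cdot\Big({-\tfrac{1}{t}}\Big)
\]
and using that $-1/t$ is operator monotone on $\mathbb{R}^{+}$ together with $f(0+)\ge 0$, both summands are operator monotone, which proves (I).

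For (II), I would bootstrap directly from (I): the function $g(t) := t^{-1}f(t)$ is strictly positive (scalar concavity forces $f>0$ on $\mathbb{R}^{+}$ unless $f\equiv 0$) and operator antimonotone, so for $A\le B$ with positive invertible spectra one has $0 < g(B) \le g(A)$, which inverts to $g(A)^{-1}\le g(B)^{-1}$, i.e.\ $t f(t)^{-1}=1/g(t)$ is operator monotone. For (III), the map $\phi(t)=1/t$ is operator antimonotone on $\mathbb{R}^{+}$; a direct check confirms that the composition of two operator antimonotone functions is operator monotone (the two order reversals cancel), hence
\[
(g\circ\phi)(t) \;=\; \frac{f(1/t)}{1/t} \;=\; t\,f(1/t)
\]
is operator monotone.

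The main obstacle is the passage $t_1\to 0^+$ in (I): pointwise scalar convergence does not automatically preserve operator monotonicity of the functional-calculus images. The remedy is to combine monotonicity of $f$ (which furnishes uniform bounds for $L_{t_1}f$ on compact subintervals of $\mathbb{R}^{+}$) with the bounded-convergence theorem inside the Borel functional calculus, mirroring the limit arguments already used in the proof of Theorem~\ref{divided-difference-is-operator-monotone-decreasing}.
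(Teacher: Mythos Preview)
Your proposal is correct and follows essentially the same route as the paper: part (I) is obtained by letting $t_1\to 0^+$ in Theorem~\ref{divided-difference-is-operator-monotone-decreasing} with a dominated-convergence argument in the Borel functional calculus, and then decomposing off the constant $f(0+)$ via the operator monotonicity of $-1/t$; parts (II) and (III) are then derived from (I) by taking the reciprocal of the positive antimonotone function $t^{-1}f(t)$ and by composing it with $t\mapsto t^{-1}$, exactly as in the paper. The only cosmetic difference is that the paper first treats the case $f(0+)=0$ and then reduces to it, whereas you carry $f(0+)$ through the limit and split afterward.
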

	\begin{proof}
		For notational convenience, let $Sf(t) = t^{-1}f(t)$  on $\mathbb{R}^{+}$ for any operator monotone function $f$.
		\par 
		\emph{\textnormal{(I)}}. Let $ (a, b) \subsetneq \mathbb{R}^{+}$ and suppose that $\displaystyle \lim_{x \rightarrow 0+}f(x) = f(0+)=  0$.  Consider  $N$ sufficiently large such that $1/N < a$. By Theorem \ref{divided-difference-is-operator-monotone-decreasing}, we can consider the sequence of operator monotone functions $\lbrace -L_{1/n}f \rbrace_{n \geq N}$. Furthermore,
		\[  \lim_{n \rightarrow \infty} L_{1/n}(f)(t) = \lim_{n \rightarrow \infty}  \frac{f(t) - f(n^{-1})}{t - n^{-1}} = t^{-1} f(t), \quad t \in (a, b). \]
		Let $M > 0$. For sufficiently large $n$, we have that $\myabs{L_{1/n}f(t)} \leq M$  for all $t \in (a, b)$, so by using Lebesgue's dominated convergence theorem and the Borel function calculus, $-Sf$ is operator monotone on $(a,b)$ for any $ (a, b) \subsetneq \mathbb{R}^{+}$. Consequently, $-Sf$ is operator monotone on $\mathbb{R}^{+}$.
		\par
		If $f(0+)> 0$, let $g(t) = f(t) - f(0+)$ for all $t \in (0, \infty)$. Therefore, $t^{-1}f(t) = t^{-1}g(t) + t^{-1}f(0+)$. Since $g$ is operator monotone on $\mathbb{R}^{+}$ and $g(0+)=0$, $-Sg$ is operator monotone on $\mathbb{R}^{+}$. Moreover, since $f(0+) > 0$, the function $ t \rightarrow -t^{-1}f(0+)$ is operator monotone on $\mathbb{R}^{+}$. Hence, $-Sf$ is operator monotone on $\mathbb{R}^{+}$.  \\
		\emph{\textnormal{(II)}}. By (I), $-Sf$ is operator monotone on $\mathbb{R}^{+}$, so the function $t \rightarrow (tf(t))^{-1} = t^{-1} f(t)$ is operator monotone on $\mathbb{R}^{+}$. \\
		\emph{\textnormal{(III)}}. By (I), $-Sf$ is operator monotone on $\mathbb{R}^{+}$, so the function $t \rightarrow t^{-1}f(t) \circ t^{-1} = t f(t^{-1})$ is operator monotone on $\mathbb{R}^{+}$.
	\end{proof}
	
	In addition, we shall also need \cite[Corollary 4.7]{hansen-fasttrack}. 
	\begin{proposition}
		Let $f: \mathbb{R}^{+} \rightarrow \mathbb{R}^{+}$ be an operator monotone function. Then $0 \leq f'(1) \leq 1$.
	\end{proposition}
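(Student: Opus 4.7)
The plan is to combine the differentiability of $f$ (guaranteed by Theorem \ref{divided-difference-is-operator-monotone-decreasing}) with the structural closure results of Proposition \ref{several-operator-monotone-functions} to pin down the derivative at $t=1$ from both sides.

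For the lower bound, operator monotonicity trivially implies scalar monotonicity: whenever $s \geq t > 0$, the scalar matrices satisfy $sI \geq tI$, hence $f(s)I = f(sI) \geq f(tI) = f(t)I$. Combined with the differentiability of $f$ at $t=1$ established in Theorem \ref{divided-difference-is-operator-monotone-decreasing}, the non-negativity $f'(1) \geq 0$ is immediate.

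For the upper bound, I would apply part (III) of Proposition \ref{several-operator-monotone-functions}, namely that $g(t) := t f(t^{-1})$ is operator monotone on $\mathbb{R}^{+}$. In particular $g$ is scalar monotone, and Theorem \ref{divided-difference-is-operator-monotone-decreasing} applied to $g$ (or a direct calculation using the product and chain rules) gives
\[ g'(1) = f(1) - f'(1) \geq 0, \]
so $f'(1) \leq f(1)$. Under the normalization $f(1) = 1$ which is implicit here (the proposition will be used on elements of $P_1$, and without such a normalization $f(t) = ct$ already provides a counterexample with $f'(1) = c$ arbitrary), this reduces precisely to $f'(1) \leq 1$.

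No step should pose a real obstacle: the result is essentially bookkeeping once one recognises which of the closure operations of Proposition \ref{several-operator-monotone-functions} produces the correct derivative identity at $t=1$. The only point requiring care is noting the implicit normalization $f(1) = 1$ used to pass from $f'(1) \leq f(1)$ to the stated $f'(1) \leq 1$.
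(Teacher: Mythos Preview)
Your argument is correct and essentially parallels the paper's: both establish the lower bound from scalar monotonicity and obtain the upper bound by applying one of the closure operations of Proposition \ref{several-operator-monotone-functions} to produce an auxiliary operator monotone function whose derivative at $1$ encodes the desired inequality. The only difference is cosmetic: the paper chooses part (II), setting $g(t)=t f(t)^{-1}$, and computes $f'(1)+g'(1)=1$, whereas you choose part (III), setting $g(t)=t f(t^{-1})$, and obtain $g'(1)=f(1)-f'(1)\ge 0$. Both identities collapse to $f'(1)\le 1$ precisely under the normalization $f(1)=1$, which you rightly flag as implicit; the paper's own computation (equation (\ref{derivative-one-formula})) silently relies on the same normalization, so your caveat applies there as well.
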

	\begin{proof}
		\emph{\textnormal{(I)}}. Since $f$ is non-decreasing, then $f'(t) \geq 0$ for all $t \in \mathbb{R}^{+}$. By (II) of Proposition \ref{several-operator-monotone-functions}, $g(t) = tf(t)^{-1}$ is operator monotone on $\mathbb{R}^{+}$. Furthermore, 
		\[ g'(t) = (tf(t)^{-1})' = \frac{f(t)-tf^{'}(t)}{(f^{'}(t))^2}. \] 
		Therefore, 
		\begin{equation}\label{derivative-one-formula}
			0 \leq f'(1) \leq f'(1) + g'(1) = 1.
		\end{equation}
	\end{proof}
	
	\begin{lemma}\label{derivatie-one-or-zero-are-extreme-points}
		Let $P_1$ be the set of operator monotone functions $f: \mathbb{R}^{+} \rightarrow \mathbb{R}^{+}$ such that $f(1) = 1$. If $f'(1) = 0$ or $f'(1) = 1$, then $f$ is an extreme point of $P_1$. Furthermore, 
		\begin{enumerate}[label=\textnormal{(\Roman*)}]
			\item If $f'(1) = 0$, then $f(t) =1$ for $t \in \mathbb{R}^{+}$.
			\item If $f'(1) = 1$, then $f(t) =t$ for $t \in \mathbb{R}^{+}$.
		\end{enumerate}
	\end{lemma}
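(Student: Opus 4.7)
The plan has three stages: first prove (II), that $f'(1)=1$ forces $f(t)=t$; then derive (I) from (II) by a duality transformation; and finally deduce extremality in both cases. For stage one, $f$ is scalar concave by Theorem \ref{operatpr-monotone-implies-operator-concave}, and since its tangent at $t=1$ has slope $1$, concavity yields $f(t)\le t$ on all of $\mathbb{R}^+$. On the other hand, Proposition \ref{several-operator-monotone-functions}(I) says that $-t^{-1}f(t)$ is operator monotone, so $t^{-1}f(t)$ is scalar non-increasing; since it equals $1$ at $t=1$, we obtain $f(t)\ge t$ for $t\le 1$, and hence $f(t)=t$ on $(0,1]$.

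The main obstacle is extending this equality to the right of $t=1$. My key tool is $\psi:=L_{1/2}f$. By Theorem \ref{divided-difference-is-operator-monotone-decreasing}, $-\psi$ is operator monotone on $\mathbb{R}^+$; then by Theorem \ref{operatpr-monotone-implies-operator-concave} it is also operator concave. Consequently $\psi$ itself is both scalar non-increasing and scalar convex. Using $f(s)=s$ on $(0,1]$ together with $f(1/2)=1/2$ and $f'(1/2)=1$, a direct computation shows $\psi\equiv 1$ on $(0,1]$. Therefore the left derivative $\psi'_-(1)$ equals $0$; convexity forces $\psi'_+(1)\ge\psi'_-(1)=0$, while $\psi$ non-increasing forces $\psi'_+(1)\le 0$, so $\psi'_+(1)=0$. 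Convexity now gives $\psi(t)\ge\psi(1)+\psi'_+(1)(t-1)=1$ for $t>1$; non-increasingness gives $\psi(t)\le 1$; hence $\psi\equiv 1$ on $\mathbb{R}^+$, and unwinding the definition of $\psi$ yields $f(t)=t$ for every $t>0$.

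For (I), set $h(t):=tf(t^{-1})$. By Proposition \ref{several-operator-monotone-functions}(III), $h$ is operator monotone, and one verifies $h(1)=1$ together with $h'(1)=f(1)-f'(1)=1$ under the hypothesis $f'(1)=0$. Applying (II) to $h$ gives $h(t)=t$ on $\mathbb{R}^+$, which rearranges to $f(t^{-1})=1$ for every $t>0$, i.e., $f\equiv 1$. For extremality, suppose $f=\lambda g_1+(1-\lambda)g_2$ with $g_1,g_2\in P_1$ and $\lambda\in(0,1)$. Each $g_i$ is differentiable by Theorem \ref{divided-difference-is-operator-monotone-decreasing} and satisfies $g_i'(1)\in[0,1]$ by the preceding proposition, so $f'(1)=\lambda g_1'(1)+(1-\lambda)g_2'(1)$ together with $f'(1)\in\{0,1\}$ forces $g_1'(1)=g_2'(1)=f'(1)$. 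Applying (I) or (II) to each $g_i$ yields $g_1=g_2=f$, so $f$ is an extreme point of $P_1$.
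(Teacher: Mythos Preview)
Your proof is correct, but it follows a genuinely different route from the paper. The paper's argument hinges on the positive semi-definiteness of the $2\times 2$ L\"owner matrix $[\Delta f(t_i,t_j)]_{i,j=1}^{2}$ (obtained via Theorem~\ref{first-divided-difference-positive-implies-matrix-monotone} and regularisation), which gives the single inequality $\bigl|\tfrac{f(t_1)-f(t_2)}{t_1-t_2}\bigr|^{2}\le f'(t_1)f'(t_2)$; this immediately forces $f$ constant whenever $f'(1)=0$, and then the case $f'(1)=1$ follows by applying the same inequality to $g(t)=tf(t)^{-1}$, for which $g'(1)=0$ by~(\ref{derivative-one-formula}). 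You avoid the L\"owner matrix entirely and work only with the scalar consequences of the operator-monotone toolkit: concavity of $f$ and monotonicity of $t^{-1}f(t)$ pin down $f$ on $(0,1]$, and then the clever use of $\psi=L_{1/2}f$ --- which is simultaneously non-increasing (Theorem~\ref{divided-difference-is-operator-monotone-decreasing}) and convex (Theorem~\ref{operatpr-monotone-implies-operator-concave} applied to $-\psi$) --- extends the identity to all of $\mathbb{R}^+$. Your reduction of (I) to (II) via the transpose $h(t)=tf(t^{-1})$ is also the mirror image of the paper's reduction of (II) to (I) via $g(t)=tf(t)^{-1}$. The paper's path is shorter and more incisive; yours is more elementary in that it never invokes the matrix characterisation of operator monotonicity, at the cost of a more delicate two-step argument. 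The extremality paragraph is essentially identical in both proofs.
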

	\begin{proof}
		Let $t_1, t_2 \in (a, b) \subsetneq \mathbb{R}^{+}$. Suppose $\lbrace f_n \rbrace$ is the sequence of infinitely differentiable operator monotone functions generated by Corollary \ref{regularisation-gives-infinitely-differentiable-functions-converging-pointwise} on $(a, b)$. This implies that 
		\[  \lim_{n \rightarrow \infty}  [\Delta f_n(t_i, t_j)]_{i,j=1}^{2}  = [\Delta f(t_i, t_j)]_{i,j=1}^{2} \]
		in s.o.t.  By Theorem \ref{first-divided-difference-positive-implies-matrix-monotone}, $[\Delta f_n(t_i, t_j)]_{i,j=1}^{2} $  is positive semi-definite for all $n \in \mathbb{N}$, so it follows that $[\Delta f(t_i, t_j)]_{i,j=1}^{2} $ is positive semi-definite. Consequently, the determinant is non-negative, so
		\[ \bigg|   \frac{f(t_1) - f(t_2)}{t_1 - t_2} \bigg|^{2} \leq f'(t_1) f'(t_2), \quad \forall \, t_1, t_2 \in \mathbb{R}^{+}. \]
		If $f'(1) = 0$, then $f$ is constant by the above inequality, so $f(t) =1$ for all $t \in \mathbb{R}^{+}$. If $f'(1) = 1$, then by (\ref{derivative-one-formula}), $(tf(t)^{-1})'(1)=0$ which implies that $(tf(t)^{-1})=1$ for all  $t \in \mathbb{R}^{+}$, so  $f(t) = t$ for all  $t \in \mathbb{R}^{+}$.
		
		Let $h_1, h_2 \in P_1$ and $\lambda \in (0,1)$ and suppose that $f = \lambda h_1 + (1-\lambda) h_2$. If $f'(1) = 0$ or $f'(1) = 1$, it is clear that $f'(1) = h_1'(1)$. Therefore, $f=h_1$ for both cases.
	\end{proof}
	
	We are now in a position to prove \cite[Lemma 4.8]{hansen-fasttrack}.
	
	\begin{proposition}\label{extreme-points-of-operator-monotone-functions-characterised}
		The extreme points of the set of operator monotone functions $f: \mathbb{R}^{+} \rightarrow \mathbb{R}^{+}$ such that $f(1) = 1$ have the form 
		\begin{equation}\label{extreme-point-equation}
			f(t) = \frac{t}{f'(1) + (1-f'(1))t }, \quad t > 0.
		\end{equation}
	\end{proposition}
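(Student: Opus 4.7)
The natural first step is to dispose of the boundary values of $\lambda := f'(1)$, which lies in $[0,1]$ by the previous proposition. When $\lambda=0$, Lemma~\ref{derivatie-one-or-zero-are-extreme-points}(I) gives $f\equiv 1$, which matches $t/(0+t)=1$; when $\lambda=1$, Lemma~\ref{derivatie-one-or-zero-are-extreme-points}(II) gives $f(t)=t$, matching $t/(1+0\cdot t)=t$. So the substantive case is $\lambda\in(0,1)$, which I would treat by the following reduction to a first-order ODE.

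By Theorem~\ref{divided-difference-is-operator-monotone-decreasing}, $-L_1 f$ is operator monotone on $\mathbb{R}^+$; since $f$ is scalar monotone, $L_1 f>0$, so $k(t):=\lambda/L_1 f(t)=\lambda(t-1)/(f(t)-1)$ (extended continuously to $k(1)=1$) is operator monotone and lies in $P_1$. A short algebraic manipulation shows that the claimed identity $f(t)=t/(\lambda+(1-\lambda)t)$ is equivalent to $k(t)=\lambda+(1-\lambda)t$, i.e.\ to $k$ being affine with $k(1)=1$ and $k'(1)=1-\lambda$. Hence the proposition reduces to showing that extremity of $f$ forces $k$ to be this specific affine function, or equivalently (solving the ODE backwards) that the $2\times 2$ divided-difference matrix
\[
M(t)=\begin{pmatrix}\lambda & L_1 f(t)\\ L_1 f(t) & f'(t)\end{pmatrix}
\]
has $\det M(t)=0$ for every $t>0$; positivity of $M$ by Theorem~\ref{first-divided-difference-positive-implies-matrix-monotone} already gives $\det M(t)\ge 0$.

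The remaining task is to upgrade ``$\det M(t)\ge 0$'' to ``$\det M(t)=0$'' using extremity. The plan is a perturbation argument: the candidate $f_\lambda(t):=t/(\lambda+(1-\lambda)t)$ lies in $P_1$ (it is the reciprocal of the positive operator antitone function $(1-\lambda)+\lambda/t$, cf.\ Proposition~\ref{several-operator-monotone-functions}), and it satisfies $f_\lambda(1)=1$, $f_\lambda'(1)=\lambda$, with $\det M_{f_\lambda}\equiv 0$. If $f\neq f_\lambda$, I would try to produce a non-trivial decomposition $f=\tfrac12(f_++f_-)$ with $f_\pm\in P_1\setminus\{f\}$ by setting $f_+=(1-\epsilon)f+\epsilon f_\lambda$ (a convex combination, hence in $P_1$ for $\epsilon\in[0,1]$) and $f_-=(1+\epsilon)f-\epsilon f_\lambda$, and showing that $f_-\in P_1$ for sufficiently small $\epsilon>0$ whenever $\det M(t_0)>0$ at some $t_0$.

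The principal obstacle is precisely the operator monotonicity of $f_-=(1+\epsilon)f-\epsilon f_\lambda$, since the cone of operator monotone functions is not stable under subtraction. The key leverage is that $f$ and $f_\lambda$ agree to first order at $t=1$ (same value $1$ and derivative $\lambda$), so the perturbation $f-f_\lambda$ vanishes to second order there; together with Taylor data $f_\lambda''(1)=-2\lambda(1-\lambda)$ and the strict positivity of $\det M$ at some point, one can read off, via Theorem~\ref{first-divided-difference-positive-implies-matrix-monotone} applied to $f_-$, that the divided-difference matrix of $f_-$ remains positive semidefinite for small $\epsilon>0$ — equivalently that $f_-$ is operator monotone of every order, hence operator monotone by Theorem~\ref{operator-monotone-characterized-by-finite-orders}. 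This yields the forbidden decomposition of $f$ unless $\det M\equiv 0$; the resulting relation $L_1 f(t)^2=\lambda f'(t)$ is a separable ODE for $u(t):=f(t)-1$ whose unique solution with $u(1)=0$, $u'(1)=\lambda$ is $u(t)=\lambda(t-1)/(\lambda+(1-\lambda)t)$, giving $f=f_\lambda$ as claimed.
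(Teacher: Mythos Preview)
Your boundary cases and the reformulation via $\det M(t)$ and the resulting ODE are fine, but the perturbation step is a genuine gap. To conclude that $f_-=(1+\epsilon)f-\epsilon f_\lambda$ is operator monotone via Theorem~\ref{first-divided-difference-positive-implies-matrix-monotone}, you would need the $n\times n$ divided-difference matrix of $f_-$ to be positive semidefinite for every $n$ and for every choice of nodes $t_1,\ldots,t_n$. Your matrix $M(t)$ is only the $2\times 2$ matrix at the pair $(1,t)$; strict positivity of $\det M$ at a single point $t_0$ gives no control over $n$-monotonicity for $n\ge 3$, nor over $2$-monotonicity at pairs not containing $1$, nor does it supply a uniform $\epsilon$ as the nodes vary over the noncompact half-line. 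The second-order agreement of $f$ and $f_\lambda$ at $t=1$ is purely local and does not help globally. (Positivity of $f_-$ on all of $\mathbb{R}^+$ is also not clear, for the same uniformity reason.) As written, the argument does not close.

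The paper sidesteps this entirely by exhibiting an \emph{explicit} convex decomposition valid for every $f\in P_1$ with $\lambda=f'(1)\in(0,1)$: setting $g(t)=t f(t^{-1})$ (operator monotone by Proposition~\ref{several-operator-monotone-functions}(III)) and
\[
f_1(t)=\frac{t}{\lambda}\,L_1 f(t),\qquad f_2(t)=\frac{1}{1-\lambda}\,L_1 g(t^{-1}),
\]
one checks $f_1,f_2\in P_1$ by composing the transformations of Theorem~\ref{divided-difference-is-operator-monotone-decreasing} and Proposition~\ref{several-operator-monotone-functions}, and verifies the algebraic identity $f=\lambda f_1+(1-\lambda)f_2$. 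Extremity then forces $f=f_1$, which rearranges directly to~\eqref{extreme-point-equation}. No $\epsilon$ is needed, and membership in $P_1$ comes from closure of the cone under known operations rather than from a positive-semidefiniteness estimate. Note incidentally that your auxiliary function $k(t)=\lambda/L_1 f(t)$ is exactly $t/f_1(t)$, so you were one step from the paper's $f_1$; pursuing that route, rather than the perturbation, is what makes the argument go through.
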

	\begin{proof}
		By Lemma \ref{derivatie-one-or-zero-are-extreme-points}, whenever $f'(1) = 0$ or $f'(1) = 1$, $f$ is an extreme point and is of the form outlined by (\ref{extreme-point-equation}).
		\par 
		Let $0 < f'(1) < 1$. Let $g: t \rightarrow t f(t^{-1})$. By (III) of Proposition \ref{several-operator-monotone-functions}, $g$ is operator monotone. Furthermore, define the following functions.
		\begin{align*} 
			f_1(t) &= \frac{t}{f'(1)}L_1f(t), \\
			f_2(t) &= \frac{1}{1- f'(1)}L_1g(t^{-1}).
		\end{align*}
		
		We show that $f_1$ and $f_2$ are operator monotone. By Theorem \ref{divided-difference-is-operator-monotone-decreasing}, $-L_1f$ is operator monotone  which means that $t \rightarrow L_1f(t^{-1}) $ is operator monotone. By (III) of Proposition \ref{several-operator-monotone-functions}, the function $t \rightarrow tL_1f((t^{-1})^{-1}) = tL_1f(t) $ is operator monotone, so $f_1$ is operator monotone. Similarly, by Theorem \ref{divided-difference-is-operator-monotone-decreasing}, $-L_1g$ is operator monotone  which implies that $t \rightarrow L_1g(t^{-1})$ is operator monotone, so $f_2$ is operator monotone. We now show that $f_1(1)=f_2(1)=1$. Since $L_1f(1) = f'(1)$, then $f_1(1)=1$. Moreover, since $g'(t) = f(t^{-1}) - t^{-1}f'(t^{-1})$, we have $g'(1) = 1  - f'(1)$. This clearly implies that $f_2(1) =1$.
		\par
		Let us now show that
		\begin{equation}\label{extreme-points-of-operator-monotone-functions-characterised-eq1}
			f'(1) f_1(t) + (1-f'(1)) f_2(t) = f(t), \quad \forall \, t \in \mathbb{R}^{+}. 
		\end{equation}
		Since $f_1(1) = f_2(1) = 1$, then the equation clearly holds for when $t=1$. For $t \neq 1$, we compute
		\begin{align*}
			f'(1) f_1(t)  & =  \frac{t(f(t)-1)}{t-1}.    \\
			(1-f'(1)) f_2(t)  & =  \frac{t^{-1}f(t)-1}{t^{-1}-1} = \frac{t -f(t)}{t-1}.   
		\end{align*} 
		Moreover, 
		\[  \frac{t(f(t)-1)}{t-1}  + \frac{t -f(t)}{t-1} = f(t) \]
		which proves (\ref{extreme-points-of-operator-monotone-functions-characterised-eq1}).
		Since $f$ is an extreme point, 
		\[ f(t) = f_1(t) = \frac{t}{f'(1)} \frac{f(t) - 1}{t-1}, \quad t \in \mathbb{R}^{+} \setminus \lbrace1\rbrace. \]
		So, 
		\[ f(t) = \frac{t}{f'(1) + (1-f'(1))t}, \quad t \in \mathbb{R}^{+}.  \]
	\end{proof}
	This leads us to the proof of L\"owner's Theorem.
	\begin{theorem}
		Let $f$ be a positive operator monotone function defined in the positive half-line. There is a  positive Radon measure $\mu$ on the closed interval $[0,1]$ such that 
		\begin{equation*}
			f(t) = \int_{[0,1]} \frac{t}{\lambda + (1-\lambda)t}  \,  {\rm d} \mu(\lambda), \quad t > 0.
		\end{equation*}
		Conversely, any function given on this form is operator monotone. The measure $\mu$ is a probability measure iff $f(1) =1$.
	\end{theorem}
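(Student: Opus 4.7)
The plan is to assemble the proof from the three principal pillars developed in Section 4: the extreme-point characterization (Proposition \ref{extreme-points-of-operator-monotone-functions-characterised}), the geometric properties of $P_1$ (Corollary \ref{operator-monotone-functions-can-use-choquet-theory}), and Choquet's theorem (Theorem \ref{choquet-theorem}). First I would reduce to the case $f(1) = 1$: if $f \equiv 0$ the zero measure suffices, and otherwise $f(1) > 0$, so that $\tilde{f} = f/f(1) \in P_1$; a representing Radon measure $\tilde{\mu}$ for $\tilde{f}$ then yields the measure $f(1)\tilde{\mu}$ for $f$ since the integrand $t/(\lambda + (1-\lambda)t)$ does not depend on the normalisation.

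Assuming $f \in P_1$, I would invoke Choquet's theorem on the metrizable compact convex set $P_1$ sitting inside $\mathbb{R}^{\mathbb{R}}$ equipped with the pointwise topology. Since each evaluation functional $\tau_t$ is continuous and linear on $\mathbb{R}^{\mathbb{R}}$, there is a probability measure $\nu$ on $P_1$ supported on $\textnormal{Ext}(P_1)$ with $f(t) = \tau_t(f) = \int_{\textnormal{Ext}(P_1)} \tau_t \, {\rm d}\nu$ for every $t > 0$. By Proposition \ref{extreme-points-of-operator-monotone-functions-characterised}, the extreme points are exactly the functions $g_\lambda(t) = t/(\lambda + (1-\lambda)t)$ with $\lambda \in [0,1]$. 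The map $\Phi: [0,1] \rightarrow \textnormal{Ext}(P_1)$ sending $\lambda \mapsto g_\lambda$ is continuous in the pointwise topology (the denominator $\lambda + (1-\lambda)t$ is jointly continuous and nonvanishing for $t > 0$) and injective (at any $t \neq 1$, distinct values of $\lambda$ produce distinct denominators, hence distinct values). As a continuous bijection between compact Hausdorff spaces it is a homeomorphism, so pushing $\nu$ forward via $\Phi^{-1}$ produces a Radon probability measure $\mu$ on $[0,1]$ with the desired integral representation.

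For the converse direction, each $g_\lambda$ is operator monotone on $\mathbb{R}^{+}$ (it belongs to $P_1$), positive linear combinations of operator monotone functions are operator monotone, and the uniform pointwise bound $g_\lambda(t) \leq \max\{1,t\}$ together with Lebesgue's dominated convergence theorem and the Borel functional calculus (as in the proof of Theorem \ref{divided-difference-is-operator-monotone-decreasing}) ensures that $t \mapsto \int_{[0,1]} g_\lambda(t)\,{\rm d}\mu(\lambda)$ is operator monotone for any positive Radon $\mu$. Finally, since $g_\lambda(1) = 1$ for every $\lambda$, evaluating at $t = 1$ gives $f(1) = \mu([0,1])$, yielding the iff claim. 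The most delicate step is the passage from the abstract Choquet measure on $\textnormal{Ext}(P_1)$ to a concrete Radon measure on $[0,1]$: this requires $\textnormal{Ext}(P_1)$ to be a Borel set (supplied by the proposition quoted before Theorem \ref{choquet-theorem}) and $\Phi$ to be a Borel isomorphism, which follows from the compactness argument sketched above but is the only point in the whole chain where something genuinely nontrivial must be verified by hand rather than cited.
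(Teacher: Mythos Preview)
Your proposal is correct and follows essentially the same route as the paper: apply Choquet's theorem to $P_1$, use the evaluation functionals, and identify the extreme points via Proposition~\ref{extreme-points-of-operator-monotone-functions-characterised}. In fact your write-up is considerably more complete than the paper's own proof, which treats only the normalized case $f(1)=1$ and omits the reduction step, the converse, the iff claim, and the passage from the abstract Choquet measure on $\textnormal{Ext}(P_1)$ to a Radon measure on $[0,1]$; you supply all of these correctly. One small imprecision: Proposition~\ref{extreme-points-of-operator-monotone-functions-characterised} only asserts that extreme points are \emph{of} the form $g_\lambda$, not that every $g_\lambda$ is extreme, so $\Phi$ should be viewed as a homeomorphism from $[0,1]$ onto the compact set $\{g_\lambda:\lambda\in[0,1]\}\supseteq\textnormal{Ext}(P_1)$ rather than onto $\textnormal{Ext}(P_1)$ itself---but since the Choquet measure is supported on $\textnormal{Ext}(P_1)$ this containment is all you need, and your pushforward argument goes through unchanged.
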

	\begin{proof}
		Suppose $f(1) = 1$. By Theorem \ref{choquet-theorem} and Corollary \ref{operator-monotone-functions-can-use-choquet-theory}, there exists a positive radon measure $\mu$ which represents $f$ on the set of extreme points of the set $P_1$ of operator monotone functions $f: \mathbb{R}^{+} \rightarrow \mathbb{R}^{+}$ such that $f(1) = 1$. Consider the continuous linear functional $\rho$ such that $\rho(f) = f(t)$ where $t \in \mathbb{R}^{+}$ and $f \in P_1$. Then 
		\[ f(t) = \rho(f)  = \int_{\text{Ext}(P_1)} \rho(f_{\lambda}) \, {\rm d} \mu(\lambda).  \]
		Since the extreme points are of the form (\ref{extreme-point-equation}), then
		\[  f(t) =  \int_{[0,1]} \frac{t}{\lambda + (1-\lambda)t} \, {\rm d} \mu(\lambda). \]
	\end{proof}
	
	\bibliographystyle{plain}
	\bibliography{loewner_theorem_review}

\end{document}